\documentclass[11pt,a4paper,reqno]{amsart}
\usepackage[english]{babel}
\usepackage[T1]{fontenc}
\usepackage{verbatim}
\usepackage{palatino}
\usepackage{amsmath}
\usepackage{mathabx}
\usepackage{amssymb}
\usepackage{amsthm}
\usepackage{amsfonts}
\usepackage{graphicx}
\usepackage{esint}
\usepackage{color}
\usepackage{mathtools}
\usepackage{overpic}

\usepackage[colorlinks = true, citecolor = black]{hyperref}
\author{Amir Algom, Federico Rodriguez Hertz, and Zhiren Wang}
	
	\address{Department of Mathematics, University of Haifa at Oranim, Tivon 36006, Israel}

\email{\href{mailto:amir.algom@math.haifa.ac.il}{amir.algom@math.haifa.ac.il}}

\address{Department of Mathematics\\
	The Pennsylvania State University\\
	107 McAllister Building\\
	University Park, PA 16802\\
	USA}

\email{\href{mailto:fjr11@psu.edu}{fjr11@psu.edu}}

\address{Department of Mathematics\\
	Johns Hopkins University\\
	3400 N. Charles Street\\
	Baltimore, MD 21218\\
	USA}
	
	\email{\href{mailto:zhirenw@jhu.edu}{zhirenw@jhu.edu}}

\title{Smooth projections of self-similar measures}
\date{\today}
\subjclass[20200]{28A80 (primary) 42B10 (secondary)}
\keywords{Projections, self-similar measures, Fourier decay, spectral gap, spherical averages.}
\thanks{A.A. is supported by  the Israel Science Foundation (Grant No. 392/25),   NSF-BSF Grant No. 2024692, and  Grant No. 2022034 from the United States-Israel Binational Science Foundation (BSF), Jerusalem, Israel. \newline
F. RH. is partially supported by NSF Grant No.~2453688 and by the Anatole Katok Chair in Mathematics. \newline
Z.W is partially supported by NSF grant No. 2453689.}

\newcommand{\supp}{\operatorname{supp}}

\newcommand{\E}{\mathbb{E}}

\def\Barint_#1{\mathchoice
          {\mathop{\vrule width 6pt height 3 pt depth -2.5pt
                  \kern -8pt \intop}\nolimits_{#1}}%
          {\mathop{\vrule width 5pt height 3 pt depth -2.6pt
                  \kern -6pt \intop}\nolimits_{#1}}%
          {\mathop{\vrule width 5pt height 3 pt depth -2.6pt
                  \kern -6pt \intop}\nolimits_{#1}}%
          {\mathop{\vrule width 5pt height 3 pt depth -2.6pt
                  \kern -6pt \intop}\nolimits_{#1}}}

\numberwithin{equation}{section}

\theoremstyle{plain}
\newtheorem{thm}{Theorem}
\numberwithin{thm}{section}
\newtheorem*{"thm"}{"Theorem"}

\newtheorem{lemma}[thm]{Lemma}

\newtheorem{cor}[thm]{Corollary}
\newtheorem{proposition}[thm]{Proposition}
\newtheorem{"proposition"}[thm]{"Proposition"}
\newtheorem{"lemma"}[thm]{"Lemma"}

\newtheorem{claim}[thm]{Claim}

\theoremstyle{definition}

\theoremstyle{remark}
\newtheorem{remark}[thm]{Remark}

\newcommand{\nref}[1]{(\hyperref[#1]{#1})}

\DeclareMathSymbol{\intop}  {\mathop}{mathx}{"B3}

\begin{document}
\begin{abstract}
We prove a  Furstenberg-type criterion for  a given orthogonal projection of a self-similar measure to be absolutely continuous, with quantified  regularity. It requires exponential mixing of the rotational part at a rate that is sufficiently fast compared with an orbit relative analogue of  its dimension. Using Ramanujan sets of irrational rotations in \(\mathrm{SO}(3)\) constructed by Lubotzky, Phillips and Sarnak (1986, 1987), we obtain explicit applications. In particular, we exhibit singular self-similar measures whose every line projection is absolutely continuous, measures of arbitrarily small Fourier dimension with smooth projections in all but a fully explicit exceptional set of directions, and a non-trivial example of a  self-similar measure that is Salem with a $C^2 _0$ density.
\end{abstract}

\maketitle

\section{Introduction}
\subsection{Background} A fundamental result in geometric measure theory is the
Marstrand-Mattila projection theorem. Informally, it says that almost
every orthogonal projection of a Borel probability   measure on $\mathbb{R}^d$ is well behaved:  its dimension is preserved in an appropriate sense and, above
the critical dimension, the projected measure is absolutely continuous
with quantitative regularity. Marstrand originally proved the theorem
in the plane; it was subsequently extended to higher dimensions by
Mattila, building in particular on Kaufman's Fourier-analytic approach.
Falconer obtained  estimates for the exceptional set of
directions, while the Sobolev formulation below was first noted by
Peres and Schlag.

Let us formulate it precisely. For $0\leq k \leq d$ let $G_{d,k}$ denote the Grassmannian of $k$-dimensional planes in $\mathbb{R}^d$. Recall that it supports a natural $\text{SO}(d)$-invariant probability measure. For $V\in G_{d,k}$ let $\pi_V :\mathbb{R}^d \rightarrow V$ denote the corresponding orthogonal projection, and for $t\geq 0$, $H^t (V)$ stands for the corresponding order-$t$ Sobolev space.
\begin{thm} \cite[Theorems 9.7 and 9.9]{mattila1999geometry} 
 \label{Theorem Marstrand}
Let $\nu \in \mathcal{P}(\mathbb{R}^d)$ be a Borel probability measure. 
\begin{enumerate}
\item Dimension preservation: For every $0\leq k\leq d$,  for almost every $V \in \text{G}_{d,k}$,
$$\dim (\pi_V \nu) = \min \lbrace \dim \nu,\, k\rbrace,\text{ where } \dim \nu :=\inf \lbrace \dim_H A:\, \nu(A)>0\rbrace.$$

\item Smoothness: For every $1\leq k< d$ if $\nu$ has finite $k$-energy  then for almost every $V \in G_{d,k}$,
$$\pi_V \nu \ll \text{Leb}_V,\text{ and furthermore } \pi_V \nu \in L^2 (\text{Leb}_V). $$
 If furthermore $\nu$ has finite $s$-energy for $k<s$ then $\pi_V \nu \in H^{\frac{s-k}{2}} (V).$ 
\end{enumerate}
\end{thm}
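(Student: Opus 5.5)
The plan is the classical energy/Fourier-analytic argument (Kaufman--Mattila) for part (2), and a Frostman-type energy argument for part (1). Throughout, write $I_s(\nu)=\iint |x-y|^{-s}\,d\nu(x)d\nu(y)$ for the $s$-energy. Recall the Fourier representation $I_s(\nu)=c(d,s)\int_{\mathbb{R}^d}|\widehat{\nu}(\xi)|^2|\xi|^{s-d}\,d\xi$ for $0<s<d$. The key structural fact is that projection commutes with restriction of the Fourier transform: $\widehat{\pi_V\nu}(\eta)=\widehat{\nu}(\eta)$ for $\eta\in V$.

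\emph{Smoothness.} Fix $1\le k<d$ and $s\ge k$ with $I_s(\nu)<\infty$ (the case $s=k$ gives the $L^2$ statement). I would integrate the Sobolev norm over the Grassmannian:
\[
\int_{G_{d,k}}\int_V |\widehat{\nu}(\eta)|^2 (1+|\eta|)^{s-k}\,d\eta\, d\gamma_{d,k}(V).
\]
The main step is the integral-geometric identity: for nonnegative $f$ on $\mathbb{R}^d$,
\[
\int_{G_{d,k}}\int_V f(\eta)\,d\eta\,d\gamma_{d,k}(V)=c\int_{\mathbb{R}^d} f(\xi)|\xi|^{k-d}\,d\xi .
\]
This follows by writing both sides in polar coordinates and using that the pushforward of $\gamma_{d,k}\times\sigma_{S(V)}$ to the sphere is $SO(d)$-invariant, hence a multiple of $\sigma^{d-1}$.

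Applying the identity, the double integral becomes $c\int |\widehat\nu(\xi)|^2(1+|\xi|)^{s-k}|\xi|^{k-d}\,d\xi$. I would split this into $|\xi|\le1$ and $|\xi|>1$. The region $|\xi|\le 1$ is finite since $|\widehat\nu|\le1$ and $k-d>-d$. The region $|\xi|>1$ is bounded by a constant times $I_s(\nu)$. For $s=d$ one must interpret this appropriately; if $I_d(\nu)<\infty$ I would note that $\nu\in L^2$, and then handle it directly by Plancherel.

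Hence for $\gamma$-a.e.\ $V$ the function $\widehat{\pi_V\nu}$ satisfies $\int_V|\widehat{\pi_V\nu}|^2(1+|\eta|)^{s-k}<\infty$. This means $\pi_V\nu\in H^{(s-k)/2}(V)$, and in particular $\pi_V\nu\in L^2(V)$, so $\pi_V\nu\ll\mathrm{Leb}_V$.

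\emph{Dimension preservation.} The upper bound $\dim\pi_V\nu\le\min\{\dim\nu,k\}$ holds for every $V$, because $\pi_V$ is Lipschitz and $\pi_V\nu$ lives on a $k$-plane. For the lower bound, let $t<\min\{\dim\nu,k\}$. Since $\nu$ gives zero mass to sets of dimension less than $\dim\nu$, a standard Frostman-type argument shows that $\nu$ restricted to some sets $E_n$ with $\nu(\mathbb{R}^d\setminus\bigcup E_n)=0$ satisfies $I_t(\nu|_{E_n})<\infty$. The real-space estimate
\[
\int_{G_{d,k}} |\pi_V x-\pi_V y|^{-t}\,d\gamma(V)\le C|x-y|^{-t}\qquad(t<k)
\]
gives $\int I_t(\pi_V\nu|_{E_n})\,d\gamma<\infty$. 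Thus a.e.\ projection of each piece has finite $t$-energy, and hence it has no mass on sets of dimension $<t$. Taking a countable union over $n$ and over rational $t$ gives the claim. Proving this kernel estimate is a scaling plus spherical-integration computation: the set of $V$ with $|\pi_V u|\le\delta$ for a unit vector $u$ has measure $\lesssim\delta^{k}$.

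The main obstacle is the precise integral-geometric identity and its constants, together with the endpoint kernel estimate. Everything else is routine Fourier analysis.
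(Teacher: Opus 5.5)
Your proof is correct. The paper itself does not prove this theorem; it quotes it from Mattila's book, so the relevant comparison is with the standard proofs.

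\textbf{Part (2).} This is the Kaufman/Peres--Schlag Fourier argument, and it is precisely the computation the paper runs elsewhere. The disintegration $\int_{G_{d,k}}\int_W h\,d\xi\,dm(W)=C_{d,k}\int_{\mathbb R^d}h(\xi)|\xi|^{k-d}\,d\xi$ appears in the proofs of Lemmas~\ref{lem:relative-dimension-full-SO} and~\ref{lem:full-orbit-energy-to-relative-lq}, justified exactly as you do, via rotation invariance of the induced measure on $S^{d-1}$. The energy formula you use is \eqref{eq energy id} and Lemma~\ref{lem:peres-schlag-line}.

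The cited source instead proves the $L^2$ statement in physical space. Write $\underline{D}$ for the lower density with respect to $\operatorname{Leb}_V$. Fatou's lemma together with $\gamma_{d,k}\{V:|\pi_V(x-y)|\le\delta\}\lesssim\delta^k|x-y|^{-k}$ (the same estimate you use in Part (1)) gives $\int\int\underline{D}(\pi_V\nu,u)\,d\pi_V\nu(u)\,d\gamma_{d,k}(V)\lesssim I_k(\nu)$. Absolute continuity with an $L^2$ density then follows from the differentiation theory of measures. That route avoids the Fourier transform and runs both parts off a single kernel estimate, but it does not by itself give fractional smoothness. Your route yields $H^{(s-k)/2}$ at no extra cost, consistent with the paper's attribution of the Sobolev formulation to Peres and Schlag.

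\textbf{Part (1).} This is the standard energy argument.

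\textbf{Two small corrections.} First, drop the $s=d$ discussion rather than patching it. A nonzero finite measure on $\mathbb R^d$ has $I_s(\nu)=\infty$ for every $s\ge d$, since $\liminf_{r\to0}\nu(B(x,r))r^{-d}>0$ for $\nu$-a.e.\ $x$. So the hypothesis forces $k\le s<d$, which is exactly the range where $I_s(\nu)=c\int|\widehat\nu(\xi)|^2|\xi|^{s-d}\,d\xi$ holds. Your claim that $I_d(\nu)<\infty$ gives $\nu\in L^2$ is vacuous. The Fourier identity itself needs no compact support: writing $|z|^{-s}$ as a superposition of Gaussians proves it for all finite Borel measures.

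Second, in Part (1) the step you call Frostman-type is really a Vitali covering argument. It shows $\dim_H\{x:\underline{\dim}_{\mathrm{loc}}\nu(x)<s\}\le s$, hence $\underline{\dim}_{\mathrm{loc}}\nu\ge\dim\nu$ at $\nu$-a.e.\ point. Then for $t<t'<\dim\nu$, the sets $E_n=\{x:\nu(B(x,r))\le r^{t'}\ \forall r<1/n\}$ satisfy $\sup_{x\in E_n}\int_{E_n}|x-y|^{-t}\,d\nu(y)<\infty$. Since $E_n$ depends on $t$, the countable intersection of good sets of $V$ must be taken over pairs $(t,n)$, as you indicate.
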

In Theorem \ref{Theorem Marstrand} and throughout the paper, $\mathcal{P}(X)$ denotes the space of Borel probability measures on a  metric space $X$, and the $s$-energy of $\nu \in \mathcal{P}(\mathbb{R}^d)$ is defined as 
$$I_s(\nu):=\int \int \frac{1}{|x-y|^s}\,d\nu(x)\,d\nu(y).$$

We are interested in a research direction suggested by Furstenberg: For dynamically
defined measures, can one determine which individual projections satisfy the conclusions of Theorem \ref{Theorem Marstrand}? \newline
There has been significant progress on this problem in the context of
Theorem~\ref{Theorem Marstrand} Part (1).  However, much less is known
about Furstenberg-type analogues of the smoothness statement in
Theorem~\ref{Theorem Marstrand} Part (2).  The purpose of this paper is to make
a contribution in this direction.  Roughly speaking, we show that if the
 dynamics is exponentially mixing on the orbit of a given projection,
and if the rate of mixing is sufficiently fast relative to the dimension of
\(\nu\) along that orbit, then the projection satisfies the smoothness
conclusion of Theorem~\ref{Theorem Marstrand} Part (2).

We will work with a class of stationary measures called self-similar measures:
 A  measure $\nu \in \mathcal{P}(\mathbb{R}^d)$ is called \textit{self-similar} if there exists a finite family (called an IFS - iterated function system) of strictly contracting similitudes $\Phi = \lbrace f_i(x)=r_i\cdot O_i\cdot x+t_i\rbrace_{i=1} ^n$ where $0<|r_i|<1, O_i \in \text{SO}(\mathbb{R}^d), t_i\in \mathbb{R}^d$, and a strictly positive probability vector $\mathbf{p}$ such that
$$\nu= \nu_\mathbf{p} =\sum_{i=1} ^n p_i\cdot f_i \nu,\text{ where } f_i \nu \text{ is the corresponding push-forward}.$$
\subsection{Main result: A baby case}
Our main result is given in Theorem \ref{thm:main}  below. However, to illustrate it, let us first state and discuss a special case. 
\begin{thm} \label{thm: baby case} Let
$\Phi = \lbrace f_i(x)=r\cdot O_i\cdot x+t_i\rbrace_{i=1} ^n$ be a self-similar IFS on $\mathbb{R}^3$ with a common contraction ratio $0<|r|<1$. Let $\mathbf{p}$ be a probability vector and let $\nu = \nu_\mathbf{p}$ be the corresponding self-similar measure. Let $P$ be the $L^2  (\text{SO}(3))$-operator defined by 
$P F(g)=\sum_i p_i F(O_i^{-1}g)$, for $F\in L^2(\mathrm{SO}(3))$, and write $\Lambda:=\frac{\log \left \lVert P \right \rVert_{L^2 _0 (\text{SO}(3))}}{\log |r|} $. Let $k\in \lbrace 1,2\rbrace$ and fix $V \in G_{3,k}$.
\begin{enumerate}
\item   If $\dim \nu >k$ and $\Lambda > \frac{k}{\dim \nu -k}$ then, $ \pi_V \nu \ll \text{Leb}_V.$
\item Suppose  $\dim_2 \nu>k$ and let $k\leq s < \dim_2 \nu$. Assume that
\begin{equation} \label{eq: baby condition}
\Lambda > \frac{s(\dim_2 \nu +1)}{\dim_2 \nu -s},
\end{equation}
Then, $ \pi_V \nu \ll \text{Leb}_V \text{ and furthermore } \pi_V \nu \in H^{ \frac{s-k}{2}} (V). $
\end{enumerate}

\end{thm}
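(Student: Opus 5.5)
We would prove Theorem \ref{thm: baby case} through Fourier analysis. Plancherel reduces both parts to estimates on $\widehat{\pi_V\nu}(\xi)=\hat\nu(\xi)$ for $\xi\in V$. For part (2) it suffices to show
\[
\int_V |\hat\nu(\xi)|^2 (1+|\xi|)^{s-k}\,d\xi<\infty .
\]
For part (1) a slightly weaker $L^2$-type statement, or an energy-type bound on a positive-measure piece, suffices. Decompose $V$ into dyadic shells $\{|\xi|\sim R\}$, $R=|r|^{-N}$. Iterate self-similarity $N$ times to get
\[
\hat\nu(\xi)=\sum_{|I|=N} p_I\, e^{2\pi i\langle \xi,t_I\rangle}\,\hat\nu(r^N O_I^{T}\xi).
\]
Here $O_I$ is a random walk product on $\mathrm{SO}(3)$ driven by $\mathbf p$. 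The rescaled frequency $r^N O_I^T\xi$ has norm of order $1$, so $\hat\nu$ is evaluated on the unit-scale sphere, along the random rotations $O_I^{-1}$ applied to $V$.

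The key step is to average $|\hat\nu|^2$ over the shell in $V$ and compare with the spherical average $\int_{S^2}|\hat\nu(R\theta)|^2d\theta$. By Mattila's spherical-average bounds, for $s<\dim_2\nu$ this is $\lesssim R^{-s}$, and using finite $t$-energy for $t<\dim\nu$ (part (1)) it is $\lesssim R^{-t}$. The comparison uses the spectral gap. Write the shell average over $V$ as $\int_{\mathrm{SO}(3)} F\, d(\text{orbit measure})$, where
\[
F(g)=\int_{\{|\eta|\sim R\}\cap gV}|\hat\nu(\eta)|^2d\eta .
\]
Self-similarity expresses this through $P^N F$ evaluated at the identity, after an intermediate scale. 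Split $F=\int F\,dg + F_0$ with $F_0\in L^2_0$. The mean term gives the spherical average, of size $R^{-s}R^{k-1}$ up to the shell volume. The $F_0$ term is bounded by $\|P\|^N\|F\|_{L^2}=|r|^{N\Lambda}\|F\|_{L^2}$. Passing from $L^2(\mathrm{SO}(3))$ back to pointwise values at one $V$ costs a smoothing factor. To handle it, mollify $F$ at a scale $\delta$ on $\mathrm{SO}(3)$; this is legitimate because $\hat\nu$ is essentially Lipschitz at scale $1$, so rotating by $\delta$ at frequency $R'$ changes little when $\delta R'\lesssim1$. The mollified $F$ has $L^\infty$ norm bounded by $\delta^{-3/2}\|F\|_2$ times the factor coming from $\dim_2$, which is where the $\dim_2\nu+1$ appears.

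The main obstacle is optimizing the scales. Apply $N$ steps of self-similarity from frequency $R$ down to $R'=R|r|^{N}$, mollify at $\delta\approx 1/R'$, and balance the error $|r|^{N\Lambda}\cdot(\text{polynomial in }R')$ against the main term $R^{-s}$. We would first try $R'=R^{\alpha}$ and solve for $\alpha$. The error term is summable over dyadic $R$ with weight $R^{s-k}$ exactly when condition \eqref{eq: baby condition} holds, and when $\Lambda>k/(\dim\nu-k)$ in part (1), where only $L^2$ integrability ($s=k$) with ordinary dimension is needed. In part (1) we would restrict $\nu$ to a large-measure set of finite $t$-energy, $k<t<\dim\nu$, before running the argument. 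If the bookkeeping gives a slightly worse exponent, we would refine the mollification using the $\mathrm{SO}(3)$-equivariance of the Grassmannian.
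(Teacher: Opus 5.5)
\textbf{Part~(1) has a genuine gap.} The hypothesis is only $\dim\nu>k$. The main term in your scheme is the spherical average of $\nu$ itself at the intermediate scale $R'$, and this is controlled through $\int_1^\infty R^{t-1}\mathcal S_\nu(R)\,dR\lesssim I_t(\nu)$, i.e.\ only for $t<\dim_2\nu$. But $\dim_2\nu$ can be much smaller than $\dim\nu$, even below $k$.

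Restricting to a piece $A$ of finite $t$-energy does not repair this as stated. We have $\nu|_A=\sum_{|I|=N}p_I\,f_I(\nu|_{f_I^{-1}A})$, so after Jensen each word carries a different measure. The right-hand side is then no longer $P^NF$ for a single function $F$ on the orbit, and the spectral gap cannot be applied.

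More fundamentally, $L^2$ bookkeeping cannot reach $\frac{k}{\dim\nu-k}$ even with a favourable energy bound. Putting $s=k$ into your own Part~(2) computation gives $\frac{k(D+1)}{D-k}$. The reason is that in $L^2$ the error term carries the Jacobian $|r|^{-Nk}$ of $\xi\mapsto r^NO_I^{T}\xi$ on $V$ and the shell volume $R'^{k}$. These two factors are exactly what turn $\frac{k\sigma}{D-k}$ into $\frac{k(D+\sigma)}{D-k}$, where $\sigma\downarrow1$ is the smoothing exponent.

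The missing idea is the $L^q$ version (Theorem~\ref{thm:main}) with $q\downarrow1$.
\begin{itemize}
\item Working with $W\mapsto\|(\pi_W\nu)*\psi_m^W\|_{L^q(W)}^q$, the two factors become $|r|^{-Nk(q-1)}$ and $2^{mk(q-1)}$.
\item The main term is controlled by $\dim_q^V\nu\geq\dim_q\nu$ (Lemma~\ref{lem:full-orbit-energy-to-relative-lq}), and $\dim_q\nu\to\dim\nu$ as $q\downarrow1$ by exact dimensionality (Feng--Hu, Shmerkin--Solomyak).
\item The pointwise mixing error needs the Sobolev bound of Proposition~\ref{prop:besov-orbit-sobolev-bound} for this nonlinear functional. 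It holds only for $\sigma<q$, since $z\mapsto|z|^{q-2}z$ is only $(q-1)$-H\"older, but this is still compatible with $\sigma>1=\frac12\dim G_{3,k}$.
\end{itemize}
The resulting condition $b(q-1)+\frac{b(\sigma+b(q-1))}{S-b}$ tends to $\frac{k}{\dim\nu-k}$.

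\textbf{Part~(2) follows essentially the paper's route}: iterate self-similarity, use Jensen to obtain $P^NF$ on the orbit, split off the mean, bound the mean by the spherical-average energy identity and the remainder by the gap, then balance scales. The smoothing step needs repair, however.
\begin{itemize}
\item An additive comparison of $F$ with its $\delta$-mollification fails. The bound $|F(g)-F(gh)|\lesssim\delta R'\cdot R'^{k}$ is, for $\delta R'\approx1$, comparable to the trivial bound $\|F\|_\infty\lesssim R'^{k}$, and it swamps the main term $\approx R'^{k}\mathcal S_\nu(R')$. You need either a one-sided domination $F\lesssim\widetilde F*\phi_\delta$ (from $|\hat\nu|^2\lesssim|\hat\nu|^2*w$ for compactly supported $\nu$) combined with positivity of $P^N$, or, as the paper does, Sobolev embedding $H^\sigma(\mathcal O_V)\hookrightarrow C^0$ with $\sigma>1$, applied to a smooth function whose $H^\sigma$ norm grows like $(1+R')^\sigma$, together with $\|P\|_{H^\sigma_0}=\|P\|_{L^2_0}$.
\item Smoothing on $\mathrm{SO}(3)$ costs $\delta^{-3/2}$ and gives only $\frac{s(\dim_2\nu+3/2)}{\dim_2\nu-s}$. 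The ``$+1$'' is $\frac12\dim G_{3,k}$, so you must work on the two-dimensional orbit and transfer the gap there (Lemma~\ref{lem:ambient-gap-controls-orbit-sobolev}).
\item Finite $t$-energy gives only $\int_1^\infty R^{t-1}\mathcal S_\nu(R)\,dR<\infty$, not $\mathcal S_\nu(R)\lesssim R^{-t}$ pointwise. This integrated form suffices, since the shells are summed.
\end{itemize}
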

For \(k=1\), if \eqref{eq: baby condition} holds with \(s=1\), then
\(\pi_V\nu\in L^2(V)\).  If it holds for some \(2<s<\dim_2\nu\), then
\(\pi_V\nu\in C_0(V)\), and hence
\(\operatorname{Int}_V\pi_V(\operatorname{spt}\nu)\neq\emptyset\)  (this is a consequence of the Sobolev embedding Theorem, see e.g. \cite[Chapter 4, Proposition 1.3]{Taylor1996partial}). 
Note that both parts require the induced operator $P$ on $L^2 _0 (\text{SO}(3))$, the orthogonal complement to the subspace of constant functions,  has spectral gap; that is, $\lVert P \rVert_{L^2 _0 (\text{SO}(3))}<1$ (see \eqref{def: L two zero}). The notation $\dim_2 \nu$ stands for correlation dimension or $L^2$-dimension, and is defined as 
\begin{equation} \label{eq def of l2 dim}
\dim_2 \nu:= \sup \lbrace \alpha: I_\alpha(\nu)<\infty \rbrace.
\end{equation}
Our most general result, Theorem~\ref{thm:main}, works in arbitrary
dimensions and requires neither a uniform contraction ratio nor global
spectral gap.  For the deduction of Theorem \ref{thm: baby case} from Theorem \ref{thm:main}, see the discussion following  the statement of
Theorem~\ref{thm:main}.

To the best of our knowledge, this is the first general and fully explicit
criterion ensuring smoothness of a prescribed projection of a self-similar
measure. Nonetheless, there are related works of Lindenstrauss and Varj\'u
\cite{Lind2016var}, and Kittle and Kogler \cite{kittle2024absolute}, that
concern smoothness of the ambient measure $\nu$ itself; we compare our
results and methods with theirs in Section~\ref{SEction prior}.
\subsection{Discussion of \eqref{eq: baby condition} and explicit examples}
Fix an IFS $\Phi = \lbrace f_i(x)=r\cdot O_i\cdot x+t_i\rbrace_{i=1} ^n$ and a self-similar  measure $\nu$ on $\mathbb{R}^3$; note, however, that much of our discussion and further results apply in dimension $\geq 3$. Via \eqref{eq: baby condition},  Theorem \ref{thm: baby case} relies on two delicate quantities: the spectral gap of the  operator $P$ on $L^2(\text{SO}(3))$, and $\dim_2 \nu$. Let us first discuss the spectral gap assumption. We are not aware of any known example when  $\lbrace O_i \rbrace_{i=1} ^n$ generates a dense subgroup of $\text{SO}(3)$ but $P$ does not have spectral gap.  Bourgain and Gamburd \cite[Theorem 1]{Bourgain2008Gamburd} proved that if $\lbrace O_i \rbrace_{i=1} ^n$ generates a dense subgroup and has only algebraic entries, then $\lVert P \rVert_{L^2 _0 (\text{SO}(3))}<1$ does hold true. This has been extended to $\text{SU}(d),d\geq 2$ in a subsequent paper \cite{Bourgain2012Gamburd}, and has been further extended by Benoist and de Saxc\'e \cite{Benoist2016Saxce} to general simple compact Lie groups, and in particular for $\text{SO}(d),d\geq 3$. All cited papers retain the algebraicity assumption as in \cite{Bourgain2008Gamburd}.

Precise estimates are given in the works of Lubotzky, Phillips, and Sarnak \cite{LPS1986first, LPS1987second}. These authors consider Hecke Operators on $\text{SO}(3)$, which means that $\mathbf{p}$ is taken to be uniform, and $\lbrace O_i \rbrace_{i=1} ^{n}$ is symmetric (so $n$ is  even). Denoting by $P^G F(g)=\frac1n\sum_{i=1}^n F(O_i^{-1}g)$ for $F\in L^2(\mathrm{SO}(3)),$  they  note that $\lVert P^G \rVert_{L^2 _0 (\text{SO}(3))}\geq \frac{2 \sqrt{n-1}}{n}$  is  the best possible spectral bound in this setup. 
Then, they find explicit examples of such $\lbrace O_i \rbrace_{i=1} ^{n}$ for which this  bound is \textit{achieved}: For every prime $p \equiv 1 \mod 4$ they construct a \textit{Ramanujan set} $\mathcal{R}_p \subseteq \text{SO}(3)$, which is a symmetric set  of $p+1$ elements, for which we have precisely
\begin{equation} \label{eq: ramanujan}
\lVert P^G \rVert_{L^2 _0 (\text{SO}(3))} =  \frac{2 \sqrt{p}}{p+1}.
\end{equation}
Higher-dimensional analogues of this construction were obtained by Clozel
\cite{Clozel2002automorphic} and Oh
\cite{Oh2005Ruziewicz}. However, unlike the  \(\mathrm{SO}(3)\) case,
they involve auxiliary arithmetic constants which are not given in an explicit
numerical form. While we believe such a result may be attainable, we do not know of a fully explicit
Ramanujan-type estimate for \(\mathrm{SO}(4)\).

Next, we discuss $\dim_2 \nu$, recall \eqref{eq def of l2 dim}.  There is a natural symbolic candidate for $\dim_2 \nu$ that satisfies, for measures with common contraction $0<r<1$ as in Theorem \ref{thm: baby case},
$$D_2 ^{\text{sym}}:=\min \lbrace3,\, \frac{\log \sum_i p_i ^2}{\log r} \rbrace. $$
Cawley and Mauldin \cite{Cawley1992Mauldin} proved $\dim_2 \nu =  D_2 ^{\text{sym}}$ holds if the IFS has the SSC - the  strong separation condition ($f_i\neq f_j$ implies $f_i(\text{supp}(\nu)) \cap f_j(\text{supp}(\nu)) =\emptyset$). Note, however, that elementary considerations show that in general \eqref{eq: baby condition} cannot hold assuming the IFS has the SSC. Falconer \cite{Falconer1999dim} proved  $\dim_2 \nu =  D_2 ^{\text{sym}}$ if we fix the linear part $\lbrace r\cdot O_i \rbrace_{i=1} ^n$,  for Lebesgue-a.e. choice of the translates $t_i$, assuming $r<\frac{1}{2}$. Consequently, by combining Falconer's theorem with sufficiently large
Ramanujan sets $\mathcal R_p$, one can obtain parametric families of self-similar
measures on $\mathbb R^3$ for which, for Lebesgue-a.e. choice of the
translations, every line projection belongs to $L^2$; more generally,
one can obtain any prescribed Sobolev regularity $H^t$ with $t<1$. Related recent Falconer-type projection
results for typical self-affine systems  were obtained by Morris-Sert
\cite{MorrisSert2025} and Feng-Xie \cite{FengXie2025}.
Our emphasis below, however, will be on fully explicit (i.e. non-parametric) applications of the
criterion.

A substantially weaker separation condition is the exponential separation
condition (ESC); It asks that  for some $c>0$ and infinitely many $m\in \mathbb{N}$
$$\min \lbrace d( f_I, \, f_J):\,I,J\in \lbrace 1,\dots,n \rbrace^m,\,I\neq J\rbrace\geq c^m,  $$
 where  $d(rO +t,\, r'O'+t')=|\log \frac{r}{r'}|+\lVert O-O'\rVert_{\text{op}} +\lVert t- t' \rVert$. It  always holds if all entries of $\Phi$ are algebraic, in the absence of exact overlaps (i.e. if the semi-group generated by $\Phi$ is freely generated);  see the works of Hochman \cite{hochman2015Rd, hochman2014self}. Now, in \cite[Theorem 1.4]{hochman2015Rd}, Hochman proved that if the ESC holds, the rotations of the IFS do not fix any subspace $V<\mathbb{R}^d$, and the semigroup generated by $\Phi$ is free, then the \textit{Hausdorff dimension}  $\dim \nu$   equals its symbolic counterpart $D ^{\text{sym}}:=\min \lbrace3,\, \frac{\sum_i p_i\log  p_i }{\log r} \rbrace$; this result will be used together with Theorem \ref{thm: baby case} Part (1) to derive Corollary \ref{cor:explicit-singular-smooth-projections} below. Recently, Corso and Shmerkin \cite{corso2024dynamical}, extending the work of Shmerkin \cite{shmerkin2016furstenberg},  proved that  if  all $O_i=O_j$ and $r_i=r_j$,  then the ESC further implies that $\dim_2 \nu = D_2 ^{\text{sym}}$.  It is not currently known, however, if this remains true for more general $\Phi$ as in Theorem \ref{thm: baby case} or Theorem \ref{thm:main}. 
 
Nonetheless, using  Ramanujan sets of rotations, one can  give  examples of self-similar measures for which our results apply. First, we exhibit  a non-trivial fully explicit self-similar measure on $\mathbb{R}^3$ that is singular, yet \emph{all} of its line projections are absolutely continuous:
\begin{cor}
\label{cor:explicit-singular-smooth-projections}
Let 
$$\Phi:=
\left\{
f_{q,b}(x)=
18^{-2/5}O_{q,b}x+q+2b\mathbf{1}
\right\}_{
 q\in\{0,1\}^{3}:1\leq \lVert q \rVert_1 \leq 2, \, b\in\{-1,0,1\} }$$ where
$\mathbf{1}:=(1,1,1),$ and
$(q,b)\mapsto O_{q,b},$
is 
 some bijection onto the Ramanujan set
\(\mathcal{R}_{17}\subset \operatorname{SO}(3)\).
Let \(\nu \in \mathcal{P}(\mathbb{R}^3)\) be the uniform self-similar measure with respect to $\Phi$. 

Then
$\dim\nu=\frac{5}{2}$;
In particular,
$\nu\perp \operatorname{Leb}_{\mathbb{R}^{3}}.$
Nevertheless, for every \(V\in G_{3,1}\),
$\pi_{V}\nu\ll \operatorname{Leb}_{V}.$
\end{cor}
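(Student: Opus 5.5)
The plan is to combine Hochman's dimension theorem \cite[Theorem 1.4]{hochman2015Rd} with Theorem \ref{thm: baby case} Part (1) for $k=1$.

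\textbf{Counting and the symbolic dimension.} The set $\{q\in\{0,1\}^3: 1\le \lVert q\rVert_1\le 2\}$ has $3+3=6$ elements and $b$ takes $3$ values, so $n=18=p+1$ with $p=17\equiv 1 \bmod 4$, matching $|\mathcal{R}_{17}|$. With uniform weights $p_i=1/18$ and $r=18^{-2/5}$,
$$D^{\text{sym}}=\min\Big\{3,\frac{\log 18}{\tfrac25\log 18}\Big\}=\frac52 .$$

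\textbf{Hochman's hypotheses and $\dim\nu=\tfrac52$.} Since all entries are algebraic (the LPS rotations are rational: quaternions of norm $17$ normalized), the ESC follows from freeness of the generated semigroup, by \cite{hochman2015Rd, hochman2014self}. The LPS elements generate a dense subgroup (indeed $P^G$ has a spectral gap on $L^2_0$), so no proper subspace is invariant. This is what makes irreducibility immediate.

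The main obstacle is freeness, i.e.\ absence of exact overlaps. My first attempt is to compare two distinct words $I\ne J$ of equal length $m$ (equal length suffices since the contraction ratio is common). The LPS group is free on $\mathcal{R}_{17}$ modulo the relations $O^{-1}=O'$ for the symmetric pairs. Reduced words are distinct, but non-reduced words can still coincide in their rotation part. In that case I will compare translation parts. Writing $f_I(0)=\sum_j r^{j}O_{i_1}\cdots O_{i_j}t_{i_{j+1}}$, with translations $q+2b\mathbf 1\in\mathbb{Z}^3$, the first index where $I$ and $J$ differ gives a translation difference that is a nonzero integer vector; its coordinates differ in parity or by $2$, thanks to the specific choice of $q$ and the range of $b$. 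I would then argue, for instance by viewing $r$ as an algebraic number of degree $5$, that it cannot be cancelled by the later terms. Here $r^{5}=18^{-2}$, which forces an arithmetic contradiction via divisibility by $17$ or $3$ in the denominators, or a $2$-adic or $3$-adic valuation argument on the translation sums. I expect this step to require the most care. The design of the translations (parity of $q$, separation of $b$ by steps of $2$) is presumably chosen exactly so that a $2$-adic argument works.

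\textbf{Singularity.} Since $\dim\nu=\tfrac52<3$, $\nu$ is supported on a set of Hausdorff dimension $\tfrac52$ up to null sets, hence $\nu\perp\operatorname{Leb}_{\mathbb R^3}$.

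\textbf{Projections.} By \eqref{eq: ramanujan}, $\lVert P\rVert_{L^2_0}=2\sqrt{17}/18$, so
$$\Lambda=\frac{\log(18/(2\sqrt{17}))}{\tfrac25\log 18}\approx \frac{0.7813}{1.1562}\approx 0.676 .$$
The required threshold is $\frac{k}{\dim\nu-k}=\frac{1}{3/2}=\frac23\approx 0.667<\Lambda$. One point to check is that $P$ uses $O_i^{-1}$ while $P^G$ uses the symmetric set; by symmetry these coincide, so the norms agree. Theorem \ref{thm: baby case} Part (1) with $k=1$ then gives $\pi_V\nu\ll\operatorname{Leb}_V$ for every $V\in G_{3,1}$.
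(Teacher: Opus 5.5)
\textbf{The dimension step is not established.} Your outline matches the paper's: there are $18=|\mathcal R_{17}|$ maps, so $D^{\mathrm{sym}}=5/2$. You then apply Hochman's theorem for an algebraic, irreducible IFS without exact overlaps. Finally you check $\log\frac{18}{2\sqrt{17}}>\frac{4}{15}\log 18$ (i.e.\ $\Lambda>\frac23$) and apply Part~(1) of Theorem~\ref{thm: baby case} with $k=1$. Your numerics for that last step are fine.

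The genuine gap is the step you flag yourself: you never prove that $\Phi$ has no exact overlaps, and the mechanisms you suggest would not work as stated.
\begin{itemize}
\item Comparing $f_I(0)$ and $f_J(0)$ at the \emph{first} index where $I,J$ differ goes the wrong way arithmetically. With $\beta=18^{1/5}$ we have $r=\beta^{-2}$, and $\beta$ has positive valuation at every prime above $2$ and $3$ (since $\beta^5=18$). So the later terms carry coefficients $r^j$ that are $p$-adically large, not small, and no valuation estimate lets you discard them.
\item A $2$-adic argument cannot detect $b$. We have $t_{q,b}=q+2b\mathbf 1\equiv q\pmod 2$. Once you kill $\beta$ you cannot move to higher powers of $2$, because $2=\beta^5-16\in(4,\beta)$. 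So the choice of $b$ with steps of $2$ is not what makes a $2$-adic argument work; it makes $b$ invisible mod $2$.
\end{itemize}

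\textbf{The missing idea: isolate the last letter.} This is how the paper proceeds. If $f_u=f_v$, then $|u|=|v|=n$. Multiplying the equality of translation parts by $\beta^{2(n-1)}$ gives, in $\mathbb Z[1/17,\beta]^3$,
\[
\sum_{k=1}^{n}\beta^{2(n-k)}O_{u_1}\cdots O_{u_{k-1}}t_{u_k}
=
\sum_{k=1}^{n}\beta^{2(n-k)}O_{v_1}\cdots O_{v_{k-1}}t_{v_k}.
\]
\begin{itemize}
\item Reduce modulo $(2,\beta)$, a proper ideal since $\beta^5=18$. This kills every term with $k<n$. Each LPS rotation is $\equiv I\pmod 2$, because its quaternion has first coordinate odd, the other three even, and norm $17$. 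Hence $t_{u_n}\equiv t_{v_n}\pmod 2$, which determines $q$.
\item Reduce modulo $(3,\beta)$. Again only the $k=n$ terms survive. Since $17$ is a unit mod $3$, the rotation products reduce to orthogonal matrices over $\mathbb F_3$. Therefore $|t_{u_n}|^2\equiv|t_{v_n}|^2\pmod 3$.
\item Since $|t_{q,b}|^2=m+4bm+12b^2\equiv m(1+b)\pmod 3$ with $m=\lVert q\rVert_1\in\{1,2\}$, the residue determines $b\in\{-1,0,1\}$.
\end{itemize}
Thus $u_n=v_n$; cancel $f_{u_n}$ on the right and induct.

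This $3$-adic invariant is what the translations are designed for. Without this step Hochman's theorem cannot be invoked, so $\dim\nu=\frac52$ is unproven. Both conclusions depend on it: Part~(1) needs $\dim\nu>1+1/\Lambda\approx 2.48$.
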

All the parameters of the IFS above are algebraic, and  we will verify that it has no exact overlaps. Since $\mathcal{R}_{17}$ acts irreducibly on \(\mathbb{R}^{3}\), Hochman's theorem \cite[Theorem~1.4]{hochman2015Rd}  yields \( \dim\nu=\frac{5}{2}. \) The absolute continuity of every line projection subsequently follows from Part~\textup{(1)} of Theorem~\ref{thm: baby case}. In fact, using similar ideas, one can construct a self-similar measure of this form with analogous properties  with any prescribed dimension $D\in (2,3)\cap \mathbb{Q}$, varying the Ramanujan set $\mathcal{R}_{p}$ with $D$, and adjusting the common contraction ratio. See Section \ref{subsection: cor explicit singular smooth} for the proof, and also for  explicit formulae and further discussion of the matrices in $\mathcal{R}_{17}$.

Next, we give a fully explicit family of singular self-similar measures on
\(\mathbb R^4\) whose \emph{every} line projection has an \(L^2\) density. Let \(p\equiv 1\mod 4\) be  prime, and let
 $\Omega_p$ be the standard set of \(p+1\) lifts of the Ramanujan set \(\mathcal R_p\subset SO(3)\) under the adjoint map \(SU(2)\to SO(3)\). Identifying \(\mathbb{R}^{4}\) with the quaternion algebra \(\mathbb H\), for \(\omega\in\Omega_p\) let \( A_\omega x:=\omega x, \, x\in\mathbb H \); so, \(A_\omega\in SO(4)\). The explicit arithmetic description \eqref{eq: omega p} of \(\Omega_p\) and  the matrix representation \eqref{eq: A omega} of \(A_\omega\) are given in Section~\ref{subsec:proof-singular-ssc-all-line-projections}. Enumerate \(\Omega_p=\{\omega_1,\ldots,\omega_{p+1}\}\), and let
\(M=\lceil (p+1)^{1/4}\rceil\). We choose the translations \(t_j\) to be
the "first" \(p+1\) points in the \(M^4\) integer grid. Namely, 
$$\text{for } 1\leq j\leq p+1,\, \, j-1=n_1(j)+M n_2(j)+M^2 n_3(j)+M^3 n_4(j),
\, 0\leq n_i(j)\leq M-1,$$ and
put
$t_j=(n_1(j),n_2(j),n_3(j),n_4(j)).$ For $D>1$ define
\[
\Phi_{p,D}
=
\left\{
x\mapsto (p+1)^{-1/D}\cdot A_{\omega_j}x+t_j
\right\}_{j=1}^{p+1}.
\]
\begin{cor}
\label{cor:singular-ssc-all-line-projections}
Let  $\frac{3+\sqrt{21}}{2}<D<4$  and choose a prime
\(p\equiv 1 \text{ mod } 4\) such that  
$$p>
\max\left\{
5^{\frac{4D}{4-D}},\,
2^{\frac{4D(D-1)}{D^2-3D-3}}
\right\}.$$ 
Then the uniform self-similar measure $\nu \in \mathcal{P}(\mathbb{R}^4)$ with respect to $\Phi_{p,D}$ satisfies:
\begin{enumerate}

\item  $\dim_H\nu=\dim_2\nu=D<4,$ in particular, $\nu\perp\operatorname{Leb}_{\mathbb R^4}.$

\item For every $V\in G_{4,1}$,
$\pi_V\nu\ll\operatorname{Leb}_V,$ and
$\pi_V\nu\in L^2(V).$
\end{enumerate}
\end{cor}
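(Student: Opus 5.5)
The plan is to verify the two inputs of the criterion separately: the $L^2$-dimension, via strong separation, and the spectral gap, via Lubotzky--Phillips--Sarnak. Then I would plug them into the case $k=1$, $s=1$ of the criterion. Theorem~\ref{thm: baby case} is stated for $\mathrm{SO}(3)$, so I would instead use the general Theorem~\ref{thm:main}, which only needs mixing along the orbit of the projection.

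\emph{Step 1 (dimension).} Put $r=(p+1)^{-1/D}$. Since $|t_j|\le 2M$ and every $A_{\omega}$ is an isometry, the attractor lies in the ball $B(0,R)$ with $R=2M/(1-r)$. Each piece $f_j(\operatorname{supp}\nu)$ has diameter at most $2rR$, while distinct translations $t_j$ are distinct integer points, hence at distance $\ge 1$. So the SSC holds as soon as
\[
4rM/(1-r)<1,
\]
which reads roughly $(p+1)^{\frac14-\frac1D}\lesssim 1/5$. I expect this to follow from $p>5^{4D/(4-D)}$; checking it requires tracking $M=\lceil (p+1)^{1/4}\rceil\le 2(p+1)^{1/4}$.
\begin{itemize}
\item The Cawley--Mauldin theorem then gives $\dim_2\nu=\frac{\log(p+1)^{-1}}{\log r}=D$.
\item The SSC, via the open set condition, gives $\dim_H\nu=D$, the similarity dimension.
\end{itemize}
Since $D<4$, we get $\nu\perp\mathrm{Leb}_{\mathbb R^4}$.

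\emph{Step 2 (mixing on the orbit of a line).} A line $V\in G_{4,1}$ is determined by a unit vector $v\in S^3$, viewed as a unit quaternion, and $A_\omega V$ is spanned by $\omega v$. So the rotational dynamics on the orbit of $V$ is left multiplication by $\Omega_p$ on $S^3\cong\mathrm{SU}(2)$, up to $\pm$. This action is transitive, so the orbit is all of $G_{4,1}$. The relevant operator is
\[
PF(g)=\frac1{p+1}\sum_{\omega}F(\omega^{-1}g)\quad\text{on } L^2_0(\mathrm{SU}(2)).
\]
I would invoke the LPS estimate on $S^3$ (their second paper treats Hecke operators on $S^3$ as well as $S^2$) to get $\|P\|_{L^2_0}\le\frac{2\sqrt p}{p+1}$; functions on lines are even functions on $S^3$, so this suffices. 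Hence
\[
\Lambda\ \ge\ D\,\frac{\log\frac{p+1}{2\sqrt p}}{\log(p+1)}.
\]

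\emph{Step 3 (arithmetic).} The criterion of Theorem~\ref{thm:main} with $k=s=1$ (cf. \eqref{eq: baby condition}) requires
\[
\Lambda>\frac{\dim_2\nu+1}{\dim_2\nu-1}=\frac{D+1}{D-1}.
\]
As $p\to\infty$ the lower bound for $\Lambda$ tends to $D/2$. The inequality $D/2>\frac{D+1}{D-1}$ is equivalent to $D^2-3D-2>0$, so it holds, with room to spare, when $D^2-3D-3>0$, i.e. $D>\frac{3+\sqrt{21}}2$. Quantitatively, I would bound
\[
\frac{\log\frac{p+1}{2\sqrt p}}{\log(p+1)}\ \ge\ \frac12-\frac{\log 2}{\log p},
\]
and solve the resulting linear inequality in $1/\log p$. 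This should yield exactly the hypothesis $p>2^{4D(D-1)/(D^2-3D-3)}$. The conclusion is $\pi_V\nu\in L^2(V)$, and hence $\pi_V\nu\ll\mathrm{Leb}_V$, for every $V$.

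\emph{Main obstacle.} The delicate point is Step 2. One has to set up the main theorem's orbit-relative spectral gap correctly for the $\mathrm{SU}(2)$-action by left multiplication on $\mathbb R^4$, rather than the full $\mathrm{SO}(4)$-action, which has no gap here. One also has to confirm that the LPS Ramanujan bound really applies to the lifted set $\Omega_p$ on the relevant space, handling the $\pm\omega$ ambiguity via evenness. I would first check that the orbit-dimension quantity in Theorem~\ref{thm:main} reduces to $\dim_2\nu$ in this transitive setting.
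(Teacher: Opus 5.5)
Your skeleton matches the paper's proof. The SSC gives $\dim_H\nu=\dim_2\nu=D$. The closed group generated by the $A_\omega$ is $\{x\mapsto qx:\ q\in\mathrm{SU}(2)\}$, so $\mathcal O_V=G_{4,1}$ for every line. The orbit operator is conjugate to the $\mathcal R_p$-averaging operator on $\mathrm{SO}(3)$: your reduction to even functions on $S^3$ is exactly the paper's identification $G_{4,1}\cong\mathrm{SU}(2)/\{\pm1\}\cong\mathrm{SO}(3)$, so \eqref{eq: ramanujan} itself suffices and no $S^3$ result of LPS is needed. One then applies Theorem~\ref{thm: pre main} with $k=s=1$. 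However, Step~3 checks the wrong inequality.

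\textbf{The gap in Step 3.} The condition $\Lambda>\frac{D+1}{D-1}$ is the three-dimensional condition \eqref{eq: baby condition}. There $\mathcal O_V=G_{3,1}$ is a surface, so $\sigma>1$ may be taken arbitrarily close to $1$. In Theorem~\ref{thm: pre main} the numerator is $\dim_2^V\nu+\sigma$ with $\sigma>\frac12\dim\mathcal O_V$; this is what makes the Sobolev embedding \eqref{eq:sobolev-embedding-orbit} available on the orbit. Here $\dim\mathcal O_V=\dim G_{4,1}=3$, so you must show
\[
\frac{D\log\frac{p+1}{2\sqrt p}}{\log(p+1)}>\frac{D+\sigma}{D-1}\qquad\text{for some }\sigma>\tfrac{3}{2}.
\]
Your inequality is strictly weaker, so as written Step~3 does not verify the hypothesis of the theorem.

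As $p\to\infty$ the correct requirement becomes $\frac D2>\frac{D+3/2}{D-1}$, i.e.\ $D^2-3D-3>0$. This, not ``room to spare'', is where the threshold $\frac{3+\sqrt{21}}{2}$ comes from. Likewise, with your bound $\frac12-\frac{\log 2}{\log p}$ the requirement is $\log_2 p>\frac{2D(D-1)}{D^2-3D-2\sigma}$ for some $\sigma>\frac32$. By strictness and continuity in $\sigma$, this follows from the stated $\log_2 p>\frac{4D(D-1)}{D^2-3D-3}$, but it is not ``exactly'' that hypothesis. With your $\sigma=1$ you would instead have obtained $\frac{2D(D-1)}{D^2-3D-2}$. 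The repair is easy, but it is necessary.

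\textbf{Smaller points.}
\begin{itemize}
\item Before invoking transitivity you need $G$ to be all of $\mathrm{SU}(2)$. Spectral gap forces $\langle\mathcal R_p\rangle$ to be dense in $\mathrm{SO}(3)$. Hence $\overline{\langle\Omega_p\rangle}$ surjects onto $\mathrm{SU}(2)/\{\pm1\}$, so it has index at most $2$ in the connected group $\mathrm{SU}(2)$ and therefore equals it.
\item Since $G$ is transitive on $G_{4,1}$, $m_V$ is the $\mathrm{SO}(4)$-invariant measure. This gives $\dim_2^V\nu=\dim_2\nu$ at once.
\item In Step~1, the estimate $M\le 2(p+1)^{1/4}$ is too lossy: it only gives $4rM<\frac85$. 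Use $|t_j|\le 2(M-1)$ and $M-1<(p+1)^{1/4}$ instead. Then $4r(M-1)+r<5(p+1)^{-\frac{4-D}{4D}}<1$ by $p>5^{4D/(4-D)}$.
\end{itemize}
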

Since there are infinitely many primes \(p\equiv1\pmod4\), by Dirichlet's
theorem on primes in arithmetic progressions, such a prime can always be chosen.
We remark that the first lower bound on $p$ ensures that $\Phi_{p,D}$ has the SSC, and the second ensures that we can apply Theorem \ref{thm:main}. 
In fact,  given $0\leq t<1/30$, we can pick \(D<4\) sufficiently close to \(4\) and then a sufficiently large
prime \(p\equiv1\mod4\), so that the corresponding self-similar measure $\nu\in\mathcal P(\mathbb R^4)$
has
$\pi_V\nu\in H^t(V)
\text{ for every }V\in G_{4,1}.$

It is natural to ask whether examples such as Corollary \ref{cor:singular-ssc-all-line-projections} are actually a consequence of the measure having large Fourier dimension.  Recall that the Fourier dimension of \(\nu\in\mathcal P(\mathbb R^d)\) is
defined by
\begin{equation}\label{eq: fourier dim}
\dim_F \nu
=
\sup\left\{
0\leq s\leq d:
|\mathcal{F}_\xi (\nu)|\lesssim |\xi|^{-s/2}
\right\},
\end{equation}
where we use the notation 
$$ \mathcal{F}_\xi ( \nu):= \widehat{ \nu}( \xi ).$$
This is not the case: our next application shows that our results still apply even when $\dim_F \nu$ is arbitrarily small.

Consider, for \(0<r<1\), the self-similar IFS on \(\mathbb R^4\)
defined by
\[
\Phi_r
=
\left\{
f_{O,b,c}(x,y)=\bigl(rOx+be_1,\, ry+c\bigr)
\right\}_{O\in \mathcal R_5,\ b,c\in\{0,1\}},\, x\in \mathbb{R}^3,\, y\in \mathbb{R},
\]
where \(\mathcal R_5\subset \mathrm{SO}(3)\) is the Ramanujan set associated to
\(p=5\).  

\begin{cor}
\label{cor:main}
For every \(\eta>0\) and every \(\tau>0\), there exist
\(0<r<1\) and a self-similar measure $\nu \in \mathcal{P}(\mathbb{R}^4)$ with respect to $\Phi_r$, such that
$\nu \perp \operatorname{Leb}_{\mathbb R^4}$ and
$\dim_F\nu <\eta.$

However, for every \(1\leq k\leq3\) and every \(W\in G_{4,k}\),
\[
\pi_W\nu\ll\operatorname{Leb}_W
\quad\Longleftrightarrow\quad
e_4\notin W.
\]
In fact, if \(e_4\notin W\), then
$\pi_W\nu \in H^t(W)
\text{ for every }t<\frac{4-k}{2}-\tau.$
\end{cor}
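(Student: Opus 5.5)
The measure is a product. The IFS $\Phi_r$ splits as a product of an IFS on $\mathbb R^3$ and an IFS on $\mathbb R$, so I take $\mathbf p$ to be a product weight $p_{O,b,c}=q_{O,b}\,w_c$. Then $\nu=\mu\times\lambda$, where:
\begin{itemize}
\item $\mu\in\mathcal P(\mathbb R^3)$ is the self-similar measure of $\{x\mapsto rOx+be_1\}$ with weights $q$;
\item $\lambda\in\mathcal P(\mathbb R)$ is the self-similar measure of $\{y\mapsto ry+c\}_{c\in\{0,1\}}$ with weights $w=(w_0,1-w_0)$.
\end{itemize}
This holds because both coordinates are driven by the same symbolic sequence, but with independent digits.

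\textbf{Choice of parameters.} I choose $r=1/M$, with $M\geq 3$ an integer, so that $\lambda$ is a Bernoulli-type measure with base $M$ and digits $\{0,1\}$. Such measures are never Rajchman, since $\widehat\lambda(M^n)$ does not decay. Hence $\dim_F\lambda=0$, and also $\dim_F\nu=0<\eta$, by restricting $\xi$ to the $e_4$ axis. I take $q$ uniform, so the rotational part has Hecke operator $P^G$ for $\mathcal R_5$, with norm $2\sqrt5/6$ by \eqref{eq: ramanujan}. Moreover $\dim\lambda\leq \log 2/\log M$ is tiny. Since $\dim_2\nu\leq \dim_2\mu+\dim\lambda<3+1$, we get $\nu\perp\mathrm{Leb}_{\mathbb R^4}$.

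\textbf{The failing directions.} If $e_4\in W$, then $\pi_W\nu=\pi_{W\cap e_4^\perp}\mu\times\lambda$, up to identification. Since $\lambda$ is singular (its dimension is less than $1$), $\pi_W\nu$ is not absolutely continuous.

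\textbf{The smooth directions.} If $e_4\notin W$, I would apply Theorem \ref{thm:main}, the general version allowing the $\mathrm{SO}(4)$ rotational part to be $\mathrm{diag}(O,1)$ without global spectral gap. The orbit of $W$ under $\mathrm{SO}(3)\times\{1\}$ never contains $e_4$, and mixing along this orbit is governed by $\lVert P^G\rVert_{L^2_0(\mathrm{SO}(3))}$. The spectral rate is $\Lambda=\log(6/(2\sqrt5))/\log M$.

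The real input is then a dimension along the orbit. Projections in directions transverse to $e_4$ essentially see $\pi(\mu)$ convolved with a slice of $\lambda$, and the relevant relative dimension is about $\dim_2\mu$. For $M$ large, $\mu$ comes from $12$ maps of ratio $1/M$, so its dimension is $\log 12/\log M$, which is small. That is the wrong regime for large $k$.

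So I would instead take $M$ small and many digits. The better arrangement is to make $\mu$ nearly full-dimensional: replace the single translation $be_1$ by a grid, i.e. the uniform measure on the $\mathcal R_5$-images of many translates. I must stay within $\Phi_r$, though; here $b\in\{0,1\}$ is fixed, but $r$ and $\mathbf p$ are free.

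Restricted to $\Phi_r$, the $x$-part has $12$ maps. Its dimension is $\log 12/\log(1/r)$, while $\Lambda=\log(3/\sqrt5)/\log(1/r)$. Both scale together, so the ratio $\Lambda\cdot(\dim-k)/k$ is independent of $r$. Condition \eqref{eq: baby condition} therefore cannot be forced by tuning $r$ alone. This means the real mechanism in Theorem \ref{thm:main} must use the orbit-relative dimension of $\nu$, namely $\nu$ projected on $W$ together with the $\lambda$-factor direction, with contraction $r$ close to $1$.

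\textbf{Main obstacle.} Since $\mathrm{Leb}$ regularity $H^t$ for $t<(4-k)/2-\tau$ requires energy near $4$, I expect the key step to be taking $r\to1$ (so $\dim\nu\to 4$ via overlaps). One then needs a lower bound on the $L^2$-dimension of the overlapping projection, $\dim_2\pi_W\nu$ near $k$ plus spectral slack, obtained by iterating the spectral gap itself, à la Lindenstrauss–Varjú. Meanwhile $\lambda$ with weights $w_0\neq\frac12$ and $r=1/M$ stays non-Rajchman. Realising all three simultaneously with one $r$ is delicate, and I would first attempt it by verifying the hypothesis of Theorem \ref{thm:main} for $s$ close to $4$ as $r\to1$.
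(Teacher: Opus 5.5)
Your handling of the directions with $e_4\in W$ is fine once the vertical factor is singular. A product coupling $\nu=\mu\times\lambda$ is also an admissible choice; the paper's non-product weights are just a less trivial example. But the proposal has two genuine gaps.

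\textbf{The choice $r=1/M$ breaks the statement.} With $M\geq3$ the choice is fatal, not merely delicate. $\Phi_r$ consists of $24$ similarities of ratio at most $\frac13$, so $\dim_H\operatorname{spt}\nu\leq\frac{\log24}{\log3}<2.9$. Hence for $k=3$ every $\pi_W\nu$ is singular, including those with $e_4\notin W$, and the claimed equivalence fails. The Sobolev claim with exponent close to $\frac{4-k}{2}$ comes from Theorem~\ref{thm: pre main} with $s$ close to $4$, whose threshold $\frac{s(4+\sigma)}{4-s}$ is huge. This forces $r$ extremely close to $1$. There your non-Rajchman device is unavailable: it needs $1/r$ to be a Pisot number, and Pisot numbers exceed $1.32$.

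The paper fixes $r$ first and then biases the vertical digit to $(1-\varepsilon,\varepsilon)$. Since $\dim\lambda_\varepsilon\leq\frac{-(1-\varepsilon)\log(1-\varepsilon)-\varepsilon\log\varepsilon}{-\log r}$, taking $\varepsilon$ small after $r$ gives $\dim_F\nu\leq\dim_F\lambda_\varepsilon\leq\dim\lambda_\varepsilon<\min\{1,\eta\}$. Singularity of $\nu$, and of $\pi_W\nu$ when $e_4\in W$, then follows by pushing forward to the $e_4$-axis. Note that $\dim_2\nu<4$ does not by itself imply $\nu\perp\operatorname{Leb}_{\mathbb R^4}$.

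\textbf{The lower bound on the orbit-relative dimension is missing.} This is the more serious gap, and your diagnosis of it is off. The claim that the ratio is independent of $r$ is false. $\frac{\log 12}{\log(1/r)}$ is only a similarity dimension; the true dimension saturates, while $\frac{\log(3/\sqrt5)}{\log(1/r)}\to\infty$ as $r\to1$. So letting $r\to1$ does force the spectral condition, once $\dim_2^W\nu$ is bounded below uniformly in $r$.

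What is needed is $\dim_2^W\nu=4$. It is not enough to bound $\dim_2\mu\leq3$: that is useless for $k=3$ and gives at most $H^{(3-k)/2}$ otherwise. Nor is $\dim\nu$ the right quantity, since it stays $\leq3+\dim\lambda_\varepsilon<4$; nor is $\dim_2\pi_W\nu$, which is at most $k$. Every $\theta\in S(U)$ with $U\in\mathcal O_W$ satisfies $|P_3\theta|\geq c_W>0$, where $P_3$ is the coordinate projection onto $\mathbb R^3$. So the relative dimension ignores the singular $e_4$-direction and behaves like a Sobolev dimension of the horizontal part.

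The paper obtains it from Lemma~\ref{lem:mask-estimate}. Set $H_R^{\varrho,z}(\omega)=\mathcal F_{R(\varrho\omega,z)}(\nu)$. Self-similarity gives $H_R^{\varrho,z}=m_R^{\varrho,z}\cdot T_5H_{rR}^{\varrho,z}$, with $m_R^{\varrho,z}(\omega)=a_0(Rz)+a_1(Rz)e^{-2\pi iR\varrho\omega_1}$ coming from the translations and $|a_0|,|a_1|\leq\frac12$. The argument then runs as follows.
\begin{itemize}
\item Split $H_{rR}^{\varrho,z}$ into its mean plus a mean-zero part, and apply the gap $\rho_5=\sqrt5/3$ to the latter.
\item Use $\int_{S^2}|m_R^{\varrho,z}|^2\,d\sigma\leq\frac12+o(1)$ and $\bigl\|\,|m_R^{\varrho,z}|^2-\int_{S^2}|m_R^{\varrho,z}|^2\,d\sigma\bigr\|_{L^2(S^2)}\leq\frac{1}{2\sqrt2}+o(1)$. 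Both follow from the decay of $\widehat{\sigma_{S^2}}$, uniformly for $\varrho\geq c_W$.
\item This gives $\int_{S^2}|H_R^{\varrho,z}|^2\,d\sigma\leq\vartheta\int_{S^2}|H_{rR}^{\varrho,z}|^2\,d\sigma$ for every $\vartheta>\kappa_5=\frac{19+\sqrt{91}}{36}<1$.
\item Iterating yields $\mathcal S_{\nu,W}(R)\lesssim R^{-\beta}$ for every $\beta<\frac{-\log\kappa_5}{-\log r}$, which exceeds $4$ for $r$ near $1$. Hence $\dim_2^W\nu=4$.
\end{itemize}
Theorem~\ref{thm: pre main} with $s<4$ close to $4$ then yields $H^t(W)$ for all $t<\frac{4-k}{2}-\tau$.

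Your ``iterate the spectral gap \`a la Lindenstrauss--Varj\'u'' points in this direction but contains no estimate. For your product measure the same lemma applies verbatim to $\mu$, with multiplier $\frac12(1+e^{-2\pi iR\varrho\omega_1})$, after bounding $|\widehat{\lambda_\varepsilon}|\leq1$.
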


The measures $\nu$ produced in the Corollary are  non-product couplings of two
opposing components:  Their $\mathbb R^3$-marginals are 
self-similar measures whose Ramanujan rotations yield strong decay of
spherical averages of Fourier modes, whereas their fourth-coordinate marginals are highly
biased Bernoulli convolutions.  The latter marginals are singular and 
have arbitrarily small Fourier dimension. This forces the corresponding
properties of $\nu$, and accounts for the exceptional direction
$e_4$.  On the other hand, projections onto subspaces not containing
$e_4$ necessarily see a non-trivial $\mathbb{R}^3$ component.  Their Sobolev regularity is then obtained by applying the orbit-relative \(L^2\)-dimension criterion of Theorem~\ref{thm:main}, together with its comparison with the corresponding Sobolev dimension.

Similarly to the discussion of Fourier dimension, one may ask whether our conclusions are simply a consequence of the ambient measure having large \(L^2\)-dimension. Our next example shows that this is not the case: the \(L^2\)-dimension of the measure may be arbitrarily close to \(1\), while all of its line projections are absolutely continuous with an $L^2$ density. For $0<r<1$ 
consider the IFS on $\mathbb{R}^3$
$$\widetilde \Phi_{r} = \lbrace
f_{O,0}(x)=rOx, 
f_{O,1}(x)=rOx+e_1
\rbrace_{O\in\mathcal R_5}.$$

\begin{cor}
\label{cor:rough-all-line-projections}
For every \(0<\delta<1\) and every \(0\leq t<\delta/2\), there exist
\(0<r<1\) and a self-similar measure
\(\nu\in\mathcal P(\mathbb R^3)\) with respect to $\widetilde \Phi_r$ that satisfies:
\begin{enumerate}
\item
$\dim_2\nu <1+\delta. $ 
In particular, \(\nu \notin L^2(\mathbb R^3)\).

\item For every \(V\in G_{3,1}\),
$\pi_V\nu \ll\operatorname{Leb}_V$ and $\pi_V\nu \in H^t(V).$ 
\end{enumerate}
\end{cor}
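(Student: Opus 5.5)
The plan is to deduce the corollary from the general criterion, Theorem~\ref{thm:main}, in its orbit-relative form, applied with $k=1$. The two parameters to tune are the contraction $r$ and the probability vector. I would take $\mathbf{p}$ uniform over $O\in\mathcal{R}_5$ and biased in the translation coordinate: $p_{O,0}=\frac{1-q}{6}$ and $p_{O,1}=\frac{q}{6}$ for a parameter $q\in(0,1)$ to be chosen. Whatever $q$ is, the weights attached to each rotation sum to $\frac16$. Hence the operator $P$ of Theorem~\ref{thm: baby case} is exactly the Hecke operator $P^G$ of $\mathcal{R}_5$, and \eqref{eq: ramanujan} gives $\lVert P\rVert_{L^2_0}=\frac{2\sqrt5}{6}=\frac{\sqrt5}{3}$. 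Therefore $\Lambda=\log(\sqrt5/3)/\log r$, which tends to $\infty$ as $r\to 0$, independently of $q$.

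Part (1), the upper bound $\dim_2\nu<1+\delta$, would come from a cylinder mass count. The dimension of ambient ($\mathbb R^3$) energy is limited by how many pieces carry the mass at each scale. Fix a level $m$. The $6^m$ words of length $m$ that use a given translation pattern all map into a ball of radius $\lesssim r^m$, because the rotations all fix the origin. Grouping words by translation pattern gives
\[
\sum_{B \text{ an } r^m\text{-ball}}\nu(B)^2\gtrsim (q^2+(1-q)^2)^m .
\]
This forces
\[
\dim_2\nu\le \frac{\log(q^2+(1-q)^2)}{\log r}.
\]
For small $r$ I would pick $q=q(r)$ so that this ratio lies in $(1+2t,\,1+\delta)$. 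That is possible because $q\mapsto q^2+(1-q)^2$ sweeps $[\frac12,1]$ continuously and $r^{1+\delta}<\frac12$ once $r$ is small.

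The baby criterion \eqref{eq: baby condition} cannot be used directly. The reason is that every $\mathbf p$ of this shape has $\dim_2$ of the full measure only slightly above $1$, while $\Lambda$ must beat $\frac{s(\dim_2\nu+1)}{\dim_2\nu-s}$ with $s$ close to $\dim_2\nu$. This is exactly where the orbit-relative $L^2$-dimension in Theorem~\ref{thm:main} is needed. For a line $V$, it measures the energy of $\nu$ averaged along the $\langle\mathcal R_5\rangle$-orbit of $V$, namely $\int_{S^2}|\widehat\nu(R\theta)|^2\,d\theta$-type spherical averages. I expect this quantity to be governed by the one-dimensional self-similar structure in the $e_1$-direction (a Bernoulli-type convolution with contraction $r$ and weights $q,1-q$), smeared by rotations. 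I would estimate it from the self-similarity relation
\[
\widehat\nu(\xi)=\sum_{O,b}p_{O,b}\,e^{-2\pi i b\,\xi\cdot e_1}\,\widehat\nu(rO^{T}\xi).
\]
Iterating this $m$ times, splitting $|\widehat\nu(R\,g v)|^2$ as a function of $g\in SO(3)$ into its mean plus an $L^2_0$ part, and using the $(\sqrt5/3)^m$ decay on $L^2_0$ should yield an orbit-relative dimension exceeding $1+2t$ with the required margin. Theorem~\ref{thm:main} then gives $\pi_V\nu\ll\operatorname{Leb}_V$ and $\pi_V\nu\in H^{t}(V)$ for every $V\in G_{3,1}$, taking $s=1+2t$.

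The main obstacle is the lower bound on the orbit-relative dimension. It must exceed $1+2t$, which is close to the ambient $\dim_2$, while the spectral-gap gain is only polynomial in $r$. I would first try making $r$ tiny, so that consecutive generations are well separated and the relevant energies essentially reduce to symbolic sums of $p_i^2$, and then check that the margin in Theorem~\ref{thm:main} grows like $\log(3/\sqrt5)/\log(1/r)$ times a factor that does not degenerate as $q$ varies.
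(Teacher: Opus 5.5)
There is a genuine gap, and it starts with a sign error. Since $\log(\sqrt5/3)<0$, the exponent $\Lambda=\log(\sqrt5/3)/\log r$ tends to $0$ as $r\to0$, and to $+\infty$ as $r\to1$; you have the direction reversed.

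This rules out the small-$r$ strategy. Because $1\leq s<\dim_2\nu<1+\delta$, the right-hand side of \eqref{eq: baby condition} is at least $\frac{\dim_2\nu+1}{\dim_2\nu-1}>\frac{2+\delta}{\delta}$. In the $L^2$ case of Theorem~\ref{thm:main} the threshold likewise exceeds $2/\delta$. The spectral side of either criterion, however, is a fixed positive constant divided by $\log(1/r)$. So $r$ must be close to $1$, and the margin $\log(3/\sqrt5)/\log(1/r)$ you want to exploit degenerates as $r\to0$.

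Your reason for switching to the orbit-relative dimension is also mistaken. $\mathcal R_5$ generates a dense subgroup, so $G=\operatorname{SO}(3)$ and $\mathcal O_V=G_{3,1}$. By Lemma~\ref{lem:relative-dimension-full-SO}, $\dim_2^V\nu=\dim_2\nu$, so nothing is gained. In the right regime the baby criterion applies directly. If $\dim_2\nu\geq D$ for a fixed $D\in(1+2t,1+\delta)$, then with $s=1+2t$ the right-hand side of \eqref{eq: baby condition} is at most $\frac{s(D+1)}{D-s}$. This is a fixed number, which $\Lambda$ exceeds once $r$ is close to $1$.

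What is actually missing is a lower bound $\dim_2\nu>1+2t$ in the regime $r\to1$. An upper bound lying in $(1+2t,1+\delta)$ says nothing about this. Your idea of iterating the self-similarity relation and splitting off the mean is the right mechanism, but you never carry it out, and you aim it at the wrong regime.

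The paper couples $r$ to the weight of the zero translation, taking $p_{O,0}=q/6$ with $q\to1$ (your labelling reversed).
\begin{itemize}
\item Lower bound: rerunning the one-step spherical-average contraction of Lemma~\ref{lem:mask-estimate} with multiplier $q+(1-q)e^{-2\pi iR\omega_1}$ gives a limiting quadratic form. Its entries are $A_q=q^2+(1-q)^2$, $B_q=\sqrt2\,q(1-q)\rho_5$ and $C_5=\rho_5^2$, and its top eigenvalue $\Theta_{5,q}<1$ yields $\dim_2\nu\geq\log\Theta_{5,q}/\log r$. Choosing $r=\Theta_{5,q}^{1/D}$ makes this bound equal to $D$.
\item Upper bound: the words with all translations zero map into $B(0,Mr^n)$, so $\nu(B(0,Mr^n))\geq q^n$. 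Hence $\dim_2\nu\leq 2\log q/\log r=D\cdot\frac{-2\log q}{-\log\Theta_{5,q}}$. This is $<1+\delta$ for $q$ near $1$, because numerator and denominator are both $2(1-q)+O((1-q)^2)$.
\end{itemize}

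Your own upper-bound argument does not hold up. Words sharing a translation pattern do not lie in a common $r^m$-ball, since $t_u=\sum_k b_k\,r^{k-1}O_{u_1}\cdots O_{u_{k-1}}e_1$ depends on the rotations as soon as $b_k=1$ for some $k\geq2$. So the claimed bound $\dim_2\nu\leq\log(q^2+(1-q)^2)/\log r$ is unjustified.
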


Finally, although our main aim is to study the smoothness of projections, our analysis also yields  explicit examples in which the self-similar measure itself can be shown to have a smooth density:
\begin{cor}
\label{cor:explicit-ambient-ac}
Let $\nu$ be the uniform self-similar measure with respect to the IFS \newline
$\lbrace f_{O,0}(x)= 0.9885 O x,
f_{O,1}(x)= 0.9885 O x+e_1 \rbrace_{O\in \mathcal R_5}.$
Then 
\[ \nu \in  H^t(\mathbb R^3) \text{ for every } 0\leq t<3.9,\,  
\nu\ll \operatorname{Leb}_{\mathbb R^3},
\,
\nu
\in C_0 ^2(\mathbb R^3),\, \operatorname{Int}_{\mathbb R^3}(\operatorname{spt}\nu)\neq\emptyset.
\]
Furthermore, $\nu$ is a Salem measure.
\end{cor}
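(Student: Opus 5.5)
Since $\mathcal R_5$ has $6$ elements and there are $2$ translations, the IFS has $n=12$ maps with common ratio $r=0.9885$ and uniform weights $p_i=1/12$. The rotational operator is $P=P^G$ for $\mathcal R_5$: each rotation appears with weight $2/12=1/6$. By \eqref{eq: ramanujan},
\[
\|P\|_{L^2_0(\mathrm{SO}(3))}=\frac{2\sqrt5}{6}=\frac{\sqrt5}{3}\approx 0.745,
\]
and therefore
\[
\Lambda=\frac{\log(\sqrt5/3)}{\log 0.9885}\approx \frac{0.2939}{0.01157}\approx 25.4 .
\]
This spectral quantity is the entire engine of the argument. I would not try to compute $\dim_2\nu$ or verify separation: there are heavy overlaps, since $12\cdot 0.9885^3\gg1$. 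Instead I would prove Fourier decay of $\nu$ directly, using the self-similarity relation and the spectral gap. Smoothness then follows from Plancherel, with no input about the dimension of $\nu$.

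\textbf{Step 1 (iterated Fourier identity).} Iterating $\nu=\sum_i p_i f_i\nu$ gives
\[
\widehat\nu(\xi)=\mathbb E_{I}\bigl[e^{-2\pi i\langle t_I,\xi\rangle}\,\widehat\nu(r^m O_I^{T}\xi)\bigr],
\]
where $I$ is uniform in $\{1,\dots,12\}^m$. Here the translation parts $t_I$ are not independent of the rotation word. The direct route is to pass to spherical averages. Set
\[
\sigma(R)^2=\int_{S^2}|\widehat\nu(R\theta)|^2\,d\theta ,
\]
and view $F_R(g)=\widehat\nu(Rg e_1)$ as a function on $\mathrm{SO}(3)$. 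The mean of $F_R$ is the spherical average of $\widehat\nu$. Because $\nu$ is a finite positive measure supported on a bounded set $K$, this mean is bounded by the spherical average of $|\widehat{\nu}|$. I would then apply the operator $P$ with the phase factors absorbed. The one-step relation reads
\[
F_R(g)=\tfrac12\sum_{b\in\{0,1\}}\tfrac16\sum_O e^{-2\pi i b R\langle e_1, g e_1\rangle}\,F_{rR}(O^{-1}g),
\]
which is a twisted Hecke operator. Its norm on all of $L^2$ is at most $1$, but that gives nothing. So I would decompose $F_{rR}$ into its mean and its $L^2_0$ part. The $L^2_0$ part contracts by $\sqrt5/3$ under $P$. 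The mean part is a constant $c$, and the twisted operator sends it to $c\cdot\frac12(1+e^{-2\pi i R\langle e_1,ge_1\rangle})$, whose $L^2(\mathrm{SO}(3))$ norm is
\[
|c|\Bigl(\tfrac12+\tfrac12\int \cos(2\pi R\theta_1)\,d\theta\Bigr)^{1/2}\approx |c|/\sqrt2
\]
for large $R$, since $\int_{S^2}\cos(2\pi R\theta_1)\,d\theta=\sin(2\pi R)/(2\pi R)$.

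\textbf{Step 2 (recursion).} These two estimates give a recursion of the form
\[
\|F_R\|_{L^2}\le \max\bigl(\sqrt5/3,\ 2^{-1/2}+O(R^{-1})\bigr)\,\|F_{rR}\|_{L^2}+(\text{cross terms}).
\]
Handling the interaction between the mean and the $L^2_0$ part is the \emph{main obstacle}. I would first try a $2\times2$ matrix recursion for the pair $\bigl(|\text{mean}|,\ \|\text{proj}_0\|\bigr)$ and bound its spectral radius $\rho\le\max(\sqrt5/3,1/\sqrt2)+o(1)=\sqrt5/3$. Iterating $m\approx \log R/|\log r|$ steps down to $R\sim1$ then gives
\[
\sigma(R)\lesssim R^{-\Lambda'}\quad\text{with}\quad \Lambda'\approx\Lambda\approx 25 .
\]
This exponent is far more than needed. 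If the cross terms cost a factor $2$ or so in the exponent, there is still ample room; the paper's general Theorem~\ref{thm:main} presumably packages exactly this, with an "orbit dimension". Note also that $\|F_R\|_{L^2}$ controls the spherical $L^2$ average, which is all that Plancherel requires.

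\textbf{Step 3 (conclusions).} Plancherel in polar coordinates gives
\[
\|\nu\|_{H^t}^2=\int(1+R^2)^t R^2\,\sigma(R)^2\,dR<\infty \quad\text{whenever } 2t+2-2\Lambda'<-1 .
\]
So it suffices that $\Lambda'>3.9+1.5$, which holds with a large margin; in fact I expect the paper's constant $3.9$ to come from a weaker, dimension-based exponent such as $\Lambda'=\Lambda\cdot(\text{something})$. Then $H^t$ for $t>3/2+2=3.5$ embeds into $C^2_0(\mathbb R^3)$ by Sobolev embedding. This gives $\nu\ll\mathrm{Leb}$ with a continuous density, and a continuous nonzero density forces $\operatorname{Int}(\operatorname{spt}\nu)\neq\emptyset$.

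For the Salem property, $\nu\in C^2_0$ has compact support, so its density lies in $H^{3.9^-}$. A compactly supported $C^2$-type density with $H^{t}$ regularity for $t>3/2$ has $\widehat\nu$ decaying at least like $|\xi|^{-3/2}$. To see this I would use the pointwise bound $\sup|\widehat\nu(\xi)|\,|\xi|^{2}\le\|\Delta \nu\|_{L^1}$, valid since $\Delta$ of the density is integrable with compact support. Hence $\dim_F\nu=3=\dim_H\nu$, so $\nu$ is Salem.
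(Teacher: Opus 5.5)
Your route is the paper's own: it is the $\mathbb R^3$ case $\varrho=1$, $z=0$ of Lemma~\ref{lem:mask-estimate}, followed by Plancherel, Sobolev embedding and $|\xi|^{-2}$ decay, and your Step~3 is fine. But Step~2, the only non-routine estimate, is not proved. The bound you assert there is also wrong, because the cross terms are of leading order, not $o(1)$.

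Write $F_{rR}=c+\varphi$ with $\int\varphi=0$, and set $g=P\varphi$ and $\bar m_R=\int m_R\to\frac12$. Since $\|m_R-\bar m_R\|_2\to\frac12$, the triangle-inequality recursion for the pair $(|\mathrm{mean}|,\|\mathrm{proj}_0\|)$ is, up to $o(1)$,
\[
|c_R|\le\tfrac12|c|+\tfrac12\rho_5\|\varphi\|_2,\qquad \|\varphi_R\|_2\le\tfrac12|c|+\rho_5\|\varphi\|_2,\qquad \rho_5=\tfrac{\sqrt5}{3}.
\]
The matrix $\begin{pmatrix}1/2&\rho_5/2\\ 1/2&\rho_5\end{pmatrix}$ has spectral radius about $1.07>1$. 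So this setup gives no decay at all, and nothing supports the spectral radius $\max(\sqrt5/3,1/\sqrt2)+o(1)$ or the rate $\sigma(R)\lesssim R^{-25}$.

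The missing idea is to contract the total $L^2$ norm, using orthogonality, rather than the pair. Since $\int g=0$,
\[
\sigma(R)^2=\int|m_R|^2|c+g|^2\,d\sigma=M_R|c|^2+2\operatorname{Re}\Bigl(\bar c\int\bigl(|m_R|^2-M_R\bigr)g\,d\sigma\Bigr)+\int|m_R|^2|g|^2\,d\sigma ,
\]
where $M_R=\int|m_R|^2\to\frac12$. Because $|m_R|^2=\frac12\bigl(1+\cos(2\pi R\theta_1)\bigr)$, you have $\||m_R|^2-M_R\|_2\to\frac1{2\sqrt2}$. Together with $\|m_R\|_\infty\le1$ and $\|g\|_2\le\rho_5\|\varphi\|_2$, this gives a quadratic form in $(|c|,\|\varphi\|_2)$ with matrix $\begin{pmatrix}1/2&\sqrt{10}/12\\ \sqrt{10}/12&5/9\end{pmatrix}$. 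Its top eigenvalue is $\kappa_5=(19+\sqrt{91})/36\approx0.793$. Since $|c|^2+\|\varphi\|_2^2=\sigma(rR)^2$, you get $\sigma(R)^2\le(\kappa_5+o(1))\,\sigma(rR)^2$.

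Iterating gives $\sigma(R)^2\lesssim R^{-\beta}$ for every $\beta<\log(1/\kappa_5)/\log(2000/1977)\approx20.08$. That is roughly $\sigma(R)\lesssim R^{-10}$, much weaker than your $R^{-25}$. It is still enough, since Plancherel needs only $\beta>2t+3$, which covers $t<3.9$ and in fact $t<8.5$. With this estimate inserted, the rest of your argument goes through.
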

Recall that $\nu$ being Salem means that $\dim \nu = \dim_F \nu$. 
See Section \ref{Section proof of coro}  for the proof of these Corollaries. Closely related results to Corollary \ref{cor:explicit-ambient-ac} have been obtained by  Lindenstrauss and Varj\'u \cite{Lind2016var} and Kittle and Kogler
\cite{kittle2024absolute}. We discuss this in detail in the following Section.

Finally, we remark that  Solomyak
\cite{solomyak2025fourier} recently proved polynomial Fourier decay for typical, in an appropriate sense, 
 homogeneous self-similar measures on $\mathbb{R}^d,d\geq 3$; combined with the results of Corso and Shmerkin
\cite{corso2024dynamical}, this yields absolute continuity in the
supercritical regime; see also \cite{solomyak2023absolute}.   De-Jun Feng and Zhou Feng
\cite{feng2024typical} obtained sufficient conditions for typical
self-affine sets in arbitrary dimension to have non-empty interior,
assuming $r<\frac{1}{2}$ in the setting of Corollary \ref{cor:explicit-ambient-ac}.  

\subsection{Prior results} \label{SEction prior} There is a rich literature on Furstenberg-type classification of dimension
dropping projections, that is, projections failing the dimension conclusion of
Theorem~\ref{Theorem Marstrand} part (1).  Our discussion is selective and
oriented toward Theorem~\ref{thm: baby case}; broader surveys  can be found in Shmerkin~\cite{shmerkin2015projections}
, Falconer, Fraser, and Jin~\cite{falconer2015sixty},  Solomyak \cite{solomyak2023notes}, and Falconer \cite{falconer2026seventy}.

A particularly relevant parametric result is due to Peres and
Schlag~\cite{Peres200Schlag}. They developed a  framework for
parametrized families of so-called generalized projections satisfying certain transversality
assumptions. They obtain Sobolev regularity for typical parameters and give  
bounds on the dimension for the exceptional set.  In the self-similar setting, Shmerkin and Solomyak \cite{Shmerkin2016Solomyak} proved that, for a planar homogeneous self-similar measure satisfying the SSC, in the supercritical regime all line projections outside a zero-dimensional exceptional set of directions are absolutely continuous, with densities in \(L^q\) for some \(q>1\). These results  establish regularity outside a small exceptional set  parameters. In contrast, we impose the stronger  hypothesis of spectral gap, but in return obtain regularity for a prescribed projection of a fixed self-similar measure. More recently, Banaji and Yu~\cite{Banaji2025yu} used modern methods in the
Fourier analysis of self-similar measures to prove that radial projections of
certain Cartesian products of self-similar sets have positive Lebesgue
measure, and non-empty interior. Their results do not concern orthogonal projections, which are the
focus of the present paper.

A very general Furstenberg type projection Theorem  for self similar measures was given by Hochman and Shmerkin in \cite{hochman2009local}: Let $\nu \in \mathcal{P}(\mathbb{R}^d)$ be a self-similar measure with the SSC, and assume that for some (equivalently, any) $V \in G_{d,k}$, 
\begin{equation} \label{eq HS condition}
\big\{  O_{i_1}\cdots O_{i_m}V:\, i_1,....,i_m \in \lbrace1,...,n\rbrace,\, m\in \mathbb{N} \big\} \text{ is dense in } G_{d,k},
\end{equation}
where $\lbrace O_i \rbrace_{i=1} ^n$ are the rotations of the underlying IFS. 
Then, for any $V \in G_{d,k}$ we have $\dim \pi_V \nu = \min \lbrace k,\, \dim \nu \rbrace.$ That is, Theorem \ref{Theorem Marstrand} Part (1) holds for \emph{all} projections. This extended earlier work by Peres and Shmerkin \cite{Peres2009Shmerkin}. Falconer and Jin \cite{Falconer2014Jin} subsequently removed the
separation condition, using a compact-group extension and symbolic
arguments, building on the local entropy averages framework
developed in~\cite{hochman2009local}..   Algom and Shmerkin \cite{Algom2025shermkin} recently replaced the
density condition \eqref{eq HS condition} by a  weaker
criterion on the action of the orthogonal parts on the projected
subspace. Within their framework, Corso and Shmerkin \cite{corso2024dynamical} further obtained results regarding the $L^q$ dimensions of these projections.  Finally, in a slightly different direction, a recent breakthrough of Wu \cite{wu2025projection} shows that under mild conditions on a planar measure, the exceptional set to a packing dimension version of Theorem \ref{Theorem Marstrand} Part (1) is at most countable. For more recent related results regarding Furtenberg type projection Theorems  for  dynamically defined measures, under various assumptions, see e.g. \cite{Bruce2019jin, bruce2022furstenberg, Aleksi2024reso, jordan2019dimension, barany2023scaling, BaranyKaenmakiKolossvary2026}. 

Most of the results cited above follow to some extent from an entropy based approach; either the local entropy averages method of Hochman Shmerkin \cite{hochman2009local}, or Hochman's inverse Theorems for entropy \cite{hochman2014self}. It is not known if these ideas can yield information about the smoothness of the projected measure.  In fact, even if \eqref{eq HS condition} holds, smoothness may fail in general: Indeed, building on ideas of Nazarov, Peres, and Shmerkin \cite{nazarov2012peres}, Rapaport \cite{Rapaort2017exp} exhibited a  self-similar measure $\nu \in \mathcal{P}(\mathbb{R}^2)$ with dimension larger than $1$, satisfying the Hochman Shmerkin density condition \eqref{eq HS condition}, that admits a dense $G_\delta$ set  $V\in G_{2,1}$ such that $\pi_V \nu \perp \text{Leb}$. On the positive side, using the results of Shmerkin and Solomyak
\cite{Shmerkin2016Solomyak}, Rapaport \cite{Rapaort2020proj} proved that,
outside a zero-dimensional exceptional set of parameters, every line
projection of a planar homogeneous self-similar measure with strong
separation, irrational rotation, and dimension larger than $1$ belongs to
$L^q$ for some $q>1$. Again, these results are both parametric and planar; by contrast,
our criterion applies to a given projection of a fixed self-similar
measure, and apply in dimensions $d\geq3$.

The work of Rapaport \cite{Rapaort2017exp} shows that for Furstenberg-type analogues of the
smoothness part of Theorem \ref{Theorem Marstrand}, density of the rotations
 is not sufficient. This motivates the use of a quantitative mixing assumption, namely spectral
gap. Such an $L^2$ spectral gap is unavailable on $\operatorname{SO}(2)$,
so our mechanism is intrinsically higher-dimensional. In this direction, the closest
existing results do not concern projections directly, but rather absolute
continuity of the ambient self-similar measure itself.

Lindenstrauss and Varj\'u \cite{Lind2016var} proved that, in dimensions
\(d\geq 3\), if  the orthogonal parts have  spectral
gap then the associated self-similar measure is absolutely
continuous, with a smooth density, provided the contraction ratios are
sufficiently close to one. Boutonnet and Ioana \cite{boutonnet2020local} subsequently gave a simpler approach to the main twisted-operator estimate underlying the argument of Lindenstrauss and Varj\'u. Recently, Kittle and Kogler
\cite{kittle2024absolute} extended  this circle of
ideas. Among other things, they give absolute-continuity criteria in arbitrary
dimensions, construct explicit absolutely continuous inhomogeneous examples also in
dimensions one and two, and in dimensions \(d\geq 3\) improve the
Lindenstrauss Varj\'u criterion for absolute continuity (without always assuming spectral gap; their methods, however, do not give \(L^2\) or Sobolev regularity of the density, as far as we are aware). 

It is thus interesting to compare our results to 
\cite{Lind2016var, boutonnet2020local, kittle2024absolute}. First,  the aims are different. The results of
\cite{Lind2016var,kittle2024absolute} prove absolute continuity of the ambient
self-similar measure \(\nu\) itself. By contrast, we study its
projections. In particular, several of our main examples are singular measures whose projections are nevertheless absolutely continuous in all, or in a prescribed class of, directions. Thus, it does not seem that Corollaries \ref{cor:explicit-singular-smooth-projections},  \ref{cor:singular-ssc-all-line-projections}, \ref{cor:main}, \ref{cor:rough-all-line-projections},   can be recovered from \cite{Lind2016var,kittle2024absolute}.

There is also an important quantitative distinction. The criteria in
\cite{Lind2016var,kittle2024absolute}, as well as the alternative approach to \cite{Lind2016var} found in \cite{boutonnet2020local}, involve constants whose numerical values
are not made explicit.  In contrast, the conditions in Theorems \ref{thm: baby case} and \ref{thm:main} are completely
explicit. We are able to achieve this by working directly with the Fourier transform of $\pi_V \nu$, and avoiding any additive combinatorial machinery, which often leads to such constants that could be difficult to estimate in practice, and could be quite large in general. Thus, although the methods of \cite{Lind2016var,boutonnet2020local} can be made effective, the quantitative analysis we carried out did not yield estimates strong enough to recover Corollary~\ref{cor:explicit-ambient-ac}. We do not rule out, however, that a sharper optimized analysis of their methods could do so.

\subsection{Main result} We first state an \(L^2\)-version of our main result. It  already implies many of the applications above, while avoiding the additional notation needed for the general \(L^q\)-statement that follows it. 

First, we require a notion of $L^2$ dimension of a measure relative to certain subspaces.  Let $G\leq SO(d)$ be a closed subgroup.
For \(V\in G_{d,k}\), write $\mathcal O_V:=G\cdot V\subseteq G_{d,k}$ 
for its compact \(G\)-orbit, and let \(m_V\) denote the normalized
\(G\)-invariant probability measure on \(\mathcal O_V\).  For \(W\in G_{d,k}\),
let \(S(W)=\{\theta\in W:|\theta|=1\}\) be its unit sphere, and let \(\sigma_W\) be the normalized
surface measure on \(S(W)\). For a measure $\nu \in \mathcal{P}(\mathbb{R}^d)$ and $R>0$,  we define the spherical average relative to the $G$-orbit
of \(V\) by
\[
\mathcal{S}_{\nu,V}(R)
:=
\int_{\mathcal O_V}\int_{S(W)}
|\mathcal{F}_{R\theta}(\nu)|^2\,d\sigma_W(\theta)\,dm_V(W).
\]
We define the \(L^2\)-dimension of \(\nu\) relative to the $G$-orbit of \(V\) by
\[
\dim^{V}_2\nu
:=
\sup\left\{
0\leq t\leq d:
\int_1^\infty R^{t-1} \mathcal S_{\nu,V}(R)\,dR<\infty
\right\}.
\]
If \(G=\text{SO}(d)\) then $\dim_2 \nu = \dim^{V}_2\nu$ for all $V$; see Lemma \ref{lem:relative-dimension-full-SO}. For more discussion of this notion, see Section \ref{Section Peres schlag}. Notice that $\dim^{V}_2\nu$ also relies on the choice of $G$; however, for us, this choice will always be the closed group generated by the rotations of the IFS, so we suppress this in our notation.

Let \(H^\sigma(\mathcal O_V)\subseteq L^2(\mathcal O_V)\) denote the Sobolev
space of order \(\sigma\) on \(\mathcal O_V\); see
Section~\ref{subsec:stopped-cylinder-mixing} for the precise definition. Recall that the orthogonal complement to the subspace of constant functions in $L^2( \mathcal{O}_V, m_V)$ is denoted by $L^2 _0 (\mathcal{O}_V, m_V)$. 
\begin{thm} \label{thm: pre main} Let
$\Phi=\{f_i(x)=r_iO_i x+t_i\}_{i=1}^n$
be a self-similar IFS on \(\mathbb R^d\), \(d\geq3\), and let
\(\nu=\nu_{\mathbf p}\) be the corresponding self-similar measure.  Let
$\mathcal A:=\{|r_i|:1\leq i\leq n\}$
be the set of contraction moduli.  Fix \(1\leq k<d\) and
\(V\in G_{d,k}\).  Let
$G=\overline{\langle O_1,\ldots,O_n\rangle}\subseteq SO(d)$
and consider the orbit
$\mathcal O_V=G\cdot V.$

For \(a\in\mathcal A\), set
$I_a:=\{i:|r_i|=a\},
\,
\widetilde p_a:=\sum_{i\in I_a}p_i,$
and define the Markov operator on \(L^2(\mathcal O_V,m_V)\) by
\[
P_{a,V}F(W)
=
\sum_{i\in I_a}\frac{p_i}{\widetilde p_a}F(O_i^{-1}W).
\]

If
\(\dim_2^V\nu>k\), and  there exist
$\sigma>\frac12\dim\mathcal O_V$ and
$k\leq s<\dim_2^V\nu,$
such that
\[
\frac{-\log\left(
\sum_{a\in\mathcal A}
\widetilde p_a
\left\|P_{a,V}\right\|_{L^2 _0 (\mathcal{O}_V, m_V)}
\right)}{\max_i(-\log |r_i|)}
>
\frac{s(\dim_2^V\nu+\sigma)}{\dim_2^V\nu-s}.
\]
Then,
$\pi_V\nu\ll \operatorname{Leb}_V
\quad\text{and furthermore}\quad
\pi_V\nu\in H^{\frac{s-k}{2}}(V).$
\end{thm}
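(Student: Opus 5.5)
The plan is to bound the Sobolev energy of $\pi_V\nu$ directly on the Fourier side. Since $\widehat{\pi_V\nu}(\xi)=\mathcal F_\xi(\nu)$ for $\xi\in V$, polar coordinates in $V$ give
\[
\|\pi_V\nu\|_{H^{(s-k)/2}(V)}^2\lesssim 1+\int_1^\infty R^{s-1}F_R(V)\,dR,
\qquad F_R(W):=\int_{S(W)}|\mathcal F_{R\theta}(\nu)|^2\,d\sigma_W(\theta),
\]
where $F_R$ is viewed as a function on $\mathcal O_V$. Finiteness of this energy for $s\geq k$ gives $\pi_V\nu\in L^2(V)$, hence $\pi_V\nu\ll \operatorname{Leb}_V$, and also $\pi_V\nu\in H^{(s-k)/2}(V)$. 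It therefore suffices to show that $\int_{R_0}^{2R_0}F_R(V)\,dR\lesssim R_0^{1-s-\epsilon}$ over dyadic shells. The difficulty is that $V$ is a single point of $\mathcal O_V$, whereas $\dim_2^V\nu$ only controls the $m_V$-average $\mathcal S_{\nu,V}(R)=\int F_R\,dm_V$. We will transfer from the point $V$ to the orbit average using self-similarity and mixing.

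\emph{Step 1 (self-similarity).} Iterating $\mathcal F_\xi(\nu)=\sum_ip_ie^{i\langle\xi,t_i\rangle}\mathcal F_{r_iO_i^{-1}\xi}(\nu)$ over words $I$ of length $n$, and applying Cauchy--Schwarz to the convex combination, gives
\[
F_R(V)\le\sum_{|I|=n}p_I\,F_{R|r_I|}(O_I^{-1}V).
\]
Here the sign of $r_I$ is irrelevant because $\sigma_W$ is symmetric. Next we group the words by their sequence of contraction moduli $(a_1,\dots,a_n)$. For a fixed modulus sequence the right-hand side is $\widetilde p_{a_1}\cdots\widetilde p_{a_n}\,\bigl(P_{a_1,V}\cdots P_{a_n,V}G\bigr)(V)$, where $G=F_{R\,a_1\cdots a_n}$ is a single function on $\mathcal O_V$. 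Note that $O_I^{-1}V$ corresponds to the composition of the operators $F(W)\mapsto F(O_i^{-1}W)$.

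\emph{Step 2 (mixing plus Sobolev embedding).} Write $G=\int G\,dm_V+G_0$ with $G_0\in L^2_0$. Because $P_{a,V}$ commutes with the Laplacian on the homogeneous space $\mathcal O_V$, it preserves $H^\sigma\cap L^2_0$ with the same norm bound. Since $\sigma>\tfrac12\dim\mathcal O_V$, point evaluation is bounded on $H^\sigma$. Hence
\[
\bigl|(P_{a_1,V}\cdots P_{a_n,V}G_0)(V)\bigr|\lesssim\prod_j\|P_{a_j,V}\|_{L^2_0}\,\|G\|_{H^\sigma}.
\]
Summing over modulus sequences with weights $\widetilde p_a$ yields the factor $\lambda^n$, where $\lambda=\sum_a\widetilde p_a\|P_{a,V}\|_{L^2_0}$. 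The main term is $\sum_Ip_I\,\mathcal S_{\nu,V}(R|r_I|)$. For the error we need $\|F_{R'}\|_{H^\sigma(\mathcal O_V)}\lesssim (R')^{\sigma}$. This holds because moving $W$ by a rotation $g$ near the identity moves the frequency $R'\theta$ by $O(R'|g-1|)$, and $\xi\mapsto\mathcal F_\xi(\nu)$ is smooth with derivatives $O(1)$ (by compact support of $\nu$). Fractional $\sigma$ is then handled by interpolation. I expect this step to be the main technical obstacle: making the $H^\sigma$ bound on the orbit rigorous for non-integer $\sigma$, and ensuring that all constants are independent of $R$. A cruder bound would lose in the exponent.

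\emph{Step 3 (choice of depth and integration).} Let $r_{\min}=\min_i|r_i|$ and write $\Lambda$ for the left-hand side of the hypothesis, so $\lambda=r_{\min}^{\Lambda}$. Choose $n$ with $r_{\min}^n\approx R^{-\beta}$; then $\lambda^n\approx R^{-\beta\Lambda}$, and every $R|r_I|\in[R^{1-\beta},R]$. Fix $t<D:=\dim_2^V\nu$. Averaging over a dyadic shell in $R$, the definition of $\dim_2^V\nu$ gives $\int_{R_0}^{2R_0}\mathcal S_{\nu,V}(R|r_I|)\,dR\lesssim R_0\,(R_0|r_I|)^{-t}\le R_0^{1-(1-\beta)t}$. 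The variable rescaling $R\mapsto R|r_I|$ only costs a bounded Jacobian per dyadic block, summed over at most $O(\log R)$ blocks. The error term contributes $R_0^{1+\sigma-\beta\Lambda}$ after replacing $\|G\|_{H^\sigma}\lesssim R^\sigma$. Summability of the dyadic shells against $R_0^{s-1}$ then requires $s<(1-\beta)t$ and $s+\sigma-\beta\Lambda<0$. Using instead the sharper bound $\|G\|_{H^\sigma}\lesssim(R|r_I|)^\sigma\le R^{\sigma}$, and noting that the dominant words have $R|r_I|\approx R^{1-\beta}$ (otherwise choose a stopping time $|r_I|\approx R^{-\beta}$ rather than a fixed $n$), the second condition improves to $s+\sigma(1-\beta)<\beta\Lambda$. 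Taking $\beta$ just below $1-s/D$ and $t$ close to $D$, this becomes $\Lambda>\frac{s(D+\sigma)}{D-s}$, which is exactly the hypothesis. The stopping-time version needs the spectral estimate for products of the $P_{a,V}$ along a stopped random word. I plan to obtain it by bounding the stopped operator by the sum over $n$ of the $\lambda^n$-weighted pieces, with the normalisation by $\max_i(-\log|r_i|)$ ensuring that the number of steps is at least $\beta\log R/\max_i(-\log|r_i|)$.
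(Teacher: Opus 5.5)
Your proposal is correct and is essentially the paper's argument. It is the $q=2$ case of the proof of Theorem~\ref{thm:main}, written with the shell averages $F_R(W)$ as in the introduction's sketch rather than with Littlewood--Paley pieces (equivalent at $q=2$ by Plancherel), with your $\beta$ playing the role of $\lambda_{\max}/(\lambda_{\max}+\alpha)$, your stopping-time plan matching the paper's cut-set $\Omega_L$ with $|\eta|\geq L/\lambda_{\max}$ and $\|P_{a,V}\|\leq 1$, and your constraints $s<(1-\beta)t$ and $s+\sigma(1-\beta)<\beta\Lambda$ being exactly \eqref{eq:alpha-main-term-condition} and \eqref{eq:alpha-error-condition} at $q=2$ with $b=s$, $S=t$.
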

A few remarks are in order. First,
\(\dim\mathcal O_V\) may depend non-trivially on \(V\). For example,
consider the IFS in Corollary~\ref{cor:main}, acting on
\(\mathbb R^4\). Its orthogonal parts generate
\[
G
=
\left\{
\begin{pmatrix}
O&0\\
0&1
\end{pmatrix}
:
O\in\operatorname{SO}(3)
\right\}.
\]
For \(k=2\) and
\(V_1=\operatorname{span}\{e_1,e_2\}\), we have
$\mathcal O_{V_1}=G\cdot V_1\simeq G_{3,2},$
and hence
\(\dim\mathcal O_{V_1}=2\). On the other hand, for
\(V_2=\operatorname{span}\{e_1,e_2+e_4\}\), the stabilizer of \(V_2\)
in \(G\) is finite, and therefore
$\dim\mathcal O_{V_2}
=
\dim\operatorname{SO}(3)
=
3.$
 The relevant spectral gap may likewise depend on \(V\). The operators
\(P_{a,V}\) act boundedly on both
\(L^2_0(\mathcal O_V,m_V)\) and \(H^\sigma(\mathcal O_V)\), since they
are finite averages of pullbacks by isometries of \(\mathcal O_V\);
they also preserve \(H^\sigma_0(\mathcal O_V)\). Moreover, each
\(P_{a,V}\) commutes with the Laplace--Beltrami operator on
\(\mathcal O_V\). Consequently, for every \(\sigma\geq0\),
$\left\|P_{a,V}\right\|_
{H^\sigma_0(\mathcal O_V)}
=
\left\|P_{a,V}\right\|_
{L^2_0(\mathcal O_V,m_V)}.$
The additional requirement
$\sigma>\frac12\dim\mathcal O_V$
is needed in the proof in order to apply Sobolev embedding on
\(\mathcal O_V\), see \eqref{eq:sobolev-embedding-orbit}.

The orbit-level spectral gap can often be verified using spectral gap
on the ambient group; see
Lemma~\ref{lem:ambient-gap-controls-orbit-sobolev}. In particular, in
the setting of Theorem~\ref{thm: baby case}, the assumption
$\|P\|_{L^2_0(\operatorname{SO}(3))}<1$
forces the closed group generated by the rotations to be
\(\operatorname{SO}(3)\). Hence, for every
\(k\in\{1,2\}\) and every \(V\in G_{3,k}\),
$\mathcal O_V
=
\operatorname{SO}(3)\cdot V
=
G_{3,k}
\simeq\mathbb{RP}^2$
and therefore
$\dim\mathcal O_V=2.$
Since there is only one contraction modulus,
Lemma~\ref{lem:ambient-gap-controls-orbit-sobolev} gives
$\|P_V\|_{L^2_0(\mathcal O_V,m_V)}
\leq
\|P\|_{L^2_0(\operatorname{SO}(3))}$.  Choosing \(\sigma>1\) arbitrarily close to
\(1\), the Sobolev part of Theorem~\ref{thm: baby case} follows
directly from Theorem~\ref{thm: pre main}.

Finally, it is natural to ask whether one may replace
$\max_i(-\log|r_i|)$
in Theorems~\ref{thm: pre main} and~\ref{thm:main} by the Lyapunov
exponent
$\chi:=-\sum_i p_i\log|r_i|.$
Such a refinement is indeed possible; see Remark~\ref{rmk: lyp} below.

To state our most general  result, we require a more refined  notion of \(L^q\)-dimension of a
measure adapted to the $G$-orbit of a prescribed subspace. Fix \(1<q\leq2\).  Choose a smooth radial Schwartz function \(\psi\) on \(\mathbb R^k\) whose Fourier transform is supported in
$\lbrace\xi\in\mathbb R^k:\frac12\leq|\xi|\leq2\rbrace,$
and for which there exists a smooth low-frequency cutoff $(\varphi_0$) such that \eqref{eq: LP partition} holds.  For \(W\in G_{d,k}\) and \(m\geq1\), let
\(\psi_m^W\) denote the corresponding mean-zero Littlewood-Paley kernel on
\(W\), defined by
$\widehat{\psi_m^W}(\xi)=\widehat\psi(2^{-m}\xi),
\, \xi\in W.$ Since \(\psi\) is radial, this definition is independent of the choice of an orthogonal identification of \(W\) with \(\mathbb R^k\).

For \(S\geq k\) and $\nu \in \mathcal{P}(\mathbb{R}^d)$, define
\[
\mathcal E^V_{q,S}(\nu)
:=
\sum_{m\geq1}
2^{m(q-1)(S-k)}
\int_{\mathcal O_V}
\left\|(\pi_W\nu)*\psi_m^W\right\|_{L^q(W)}^q
\,dm_V(W).
\]
We define the relative \(L^q\)-dimension of \(\nu\) along the orbit of \(V\) by
\[
\dim_q^V\nu
:=
\sup\left\{
S\in[k,d]:
\mathcal E^V_{q,S'}(\nu)<\infty
\text{ for every } k<S'<S
\right\},
\]
with the convention that \(\dim_q^V\nu=k\) if the above set is empty (this case will not be of interest to us). This quantity may be viewed as a Besov version of \(L^q\)-dimension adapted to \(k\)-dimensional projections. Different choices of admissible Littlewood-Paley kernels yield equivalent quantities, and hence the value of \(\dim_q^V\nu\) is independent of this choice. A detailed discussion is given in Section~\ref{Section Peres schlag}.

In particular, Lemma~\ref{lem:besov-spherical-l2-equivalence} shows that there is no conflict with our earlier notation: when \(q=2\), the preceding Besov definition agrees with the spherical-average definition of \(\dim_2^V\nu\). As in the case \(q=2\), the quantity \(\dim_q^V\nu\) also depends on the acting group \(G\); but \(G\) is always the closed group generated by the rotations of the IFS, so we still suppress this dependence from the notation.

We will use the standard Besov spaces \(B^\beta_{q,q}(V)\), defined via the
same Littlewood-Paley decomposition as above; see 
Section~\ref{subsec:besov-orbit-estimates}.  We can finally state the main technical Theorem of this paper:
\begin{thm}
\label{thm:main}
Keep the notations and assumptions of Theorem~\ref{thm: pre main}. Let \(1<q\leq2\), and write
\(q'=q/(q-1)\). Suppose that
$k<b<S<\dim_q^V\nu.$
Assume 
\[
1<q<2
\,\text{ and }\quad
\frac12\dim\mathcal O_V<\sigma<q, \text{ or } 
q=2
\quad\text{and}\quad
\frac12\dim\mathcal O_V<\sigma.
\]
If
\[
\frac{-\log\left(
\sum_{a\in\mathcal A}
\widetilde p_a
\|P_{a,V}\|_{L^2_0(\mathcal O_V,\,m_V)}
\right)}{\max_i(-\log |r_i|)}
>
\left(
b(q-1)+
\frac{b(\sigma+b(q-1))}{S-b}
\right),
\]
then \(\pi_V\nu\ll\operatorname{Leb}_V\), and 
$\pi_V \nu \in B^{(b-k)/q'}_{q,q}(V);$
In particular,  \(\pi_V \nu \in  L^q(V)\).
\end{thm}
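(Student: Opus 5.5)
The plan is to bound the Besov norm of $\pi_V\nu$ frequency by frequency, using self-similarity to move each Littlewood--Paley piece at scale $2^m$ to a coarser scale on a rotated subspace $O^{-1}V$, and then to average over the orbit $\mathcal O_V$ by means of the spectral gap. Writing $\nu=\sum_{|I|\text{ stopped}}p_I f_I\nu$ over a stopping set of words $I$ with $|r_I|\approx 2^{-\ell}$ (stopped cylinders, so that the non-uniform moduli are handled by grouping the $I_a$ classes), we get
\[
\pi_V\nu*\psi_m^V=\sum_I p_I\,(\pi_V f_I\nu)*\psi_m^V ,
\]
and $\pi_V f_I\nu$ is a translated, scaled copy of $\pi_{O_I^{-1}V}\nu$ at scale $|r_I|$. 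Its frequency-$2^m$ piece is therefore, after rescaling, the frequency-$2^{m-\ell}$ piece of $\pi_{O_I^{-1}V}\nu$. Convexity of $\|\cdot\|_{L^q}^q$ is useless here, since it loses everything. Instead I split
\[
\|\pi_V\nu*\psi_m^V\|_q^q=\Big\|\sum_I p_I g_I\Big\|_q^q
\]
and interpolate between $L^1$ (trivial bound, each $\|g_I\|_1\lesssim 1$) and $L^2$. In $L^2$, the function $W\mapsto \|\pi_W\nu*\psi^W_{m-\ell}\|$-type quantities along the orbit are handled via the operator $\sum_I p_I F(O_I^{-1}\cdot)$. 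This operator is a product of the $P_{a,V}$ weighted by $\widetilde p_a$, so on the mean-zero part it contracts by $\rho^{n}$ with $\rho=\sum_a\widetilde p_a\|P_{a,V}\|_{L^2_0}$, where $n\approx \ell/\max_i(-\log|r_i|)$ is the word length.

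Concretely, define $F_j(W):=\|(\pi_W\nu)*\psi^W_j\|_{L^q(W)}^q$ on $\mathcal O_V$. The key step is the pointwise estimate
\[
F_m(V)\lesssim 2^{\ell(\cdot)}\big(m_V(F_{m-\ell})+|(P^{(n)}F_{m-\ell})_0(V)|\big),
\]
where $P^{(n)}$ is the iterated operator and $(\cdot)_0$ denotes the mean-zero part. The Jacobian factor $2^{\ell(\cdot)}$ comes from rescaling $L^q$ norms: it is $2^{\ell k(q-1)}$, plus a loss of $2^{\ell b(q-1)}$ from the $L^1$--$L^2$ interpolation, giving the term $b(q-1)$ in the hypothesis. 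The evaluation at the single point $V$ of a mean-zero function is where Sobolev embedding enters, via \eqref{eq:sobolev-embedding-orbit} with $\sigma>\frac12\dim\mathcal O_V$:
\[
|G(V)|\lesssim\|G\|_{H^\sigma(\mathcal O_V)} ,
\]
and $\|P_{a,V}\|_{H^\sigma_0}=\|P_{a,V}\|_{L^2_0}$ gives the decay $\rho^n\|F_{m-\ell}\|_{H^\sigma}$. It then remains to bound $\|F_j\|_{H^\sigma(\mathcal O_V)}$. Differentiating in $W$ along the orbit costs roughly $2^{j\sigma}$ times an $L^q$-type quantity (rotating $W$ by angle $\epsilon$ moves frequency-$2^j$ data by $2^j\epsilon$). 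The restriction $\sigma<q$ for $q<2$ arises because $|x|^q$ is only $C^{q}$-smooth, which is presumably why the hypothesis has that form; for $q=2$, $F_j$ is a smooth quadratic form and there is no restriction. The averaged term $m_V(F_{m-\ell})$ is controlled by the definition of $\mathcal E^V_{q,S}$ with $S<\dim^V_q\nu$, giving decay $2^{-(m-\ell)(q-1)(S-k)}$.

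To conclude, choose $\ell=\theta m$ with $\theta\in(0,1)$ optimized. The average term contributes $2^{\ell b(q-1)}2^{-(m-\ell)(q-1)(S-k)}$, which must beat $2^{-m(q-1)(b-k)}$. This holds when $\theta$ is not too large, roughly $\theta\le (S-b)/(S-b+\dots)$. The spectral term contributes $2^{\ell b(q-1)}\rho^{\theta m/\max(-\log|r_i|)}2^{(m-\ell)\sigma}\cdot(\text{trivial bounds})$. Balancing the two constraints on $\theta$ yields exactly the condition that $-\log\rho/\max_i(-\log|r_i|)$ exceed $b(q-1)+b(\sigma+b(q-1))/(S-b)$. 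Summing over $m$ then gives $\sum_m 2^{m(q-1)(b-k)}F_m(V)<\infty$, i.e. $\pi_V\nu\in B^{(b-k)/q'}_{q,q}(V)$. Since $b>k$, this space embeds in $L^q$, which yields absolute continuity.

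The main obstacle I expect is the $H^\sigma(\mathcal O_V)$ bound for $F_j$ with the correct power $2^{j\sigma}$ and no extra loss, together with keeping the stopped-cylinder bookkeeping for unequal contraction ratios compatible with the product structure of the $P_{a,V}$. I would first settle the case $q=2$, where $F_j(W)$ is explicit in Fourier space through spherical averages over $S(W)$, and then handle $q<2$ by fractional differentiation of $|\cdot|^q$.
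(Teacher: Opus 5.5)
\textbf{There is a genuine gap in how you reach your key pointwise inequality.} The rest of your architecture matches the paper: stopped cylinders, rescaling each cylinder to a coarser Littlewood--Paley scale on $O_I^{-1}V$, the functions $F_j$ on $\mathcal O_V$, Sobolev embedding together with $\|P_{a,V}\|_{H^\sigma_0}=\|P_{a,V}\|_{L^2_0}$, the bound $\|F_j\|_{H^\sigma(\mathcal O_V)}\lesssim 2^{j(\sigma+k(q-1))}$ for $\sigma<q$, and optimizing the ratio of contraction scale to residual scale.

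You reject convexity and propose $L^1$--$L^2$ interpolation instead. But the inequality $F_m(V)\lesssim 2^{\ell k(q-1)}\sum_I p_I F_{m-\ell}(O_I^{-1}V)$, up to boundedly many neighbouring scales, is exactly Jensen's inequality for $x\mapsto |x|^q$ applied to $\sum_I p_I g_I$ with $\sum_I p_I=1$. One combines it with the fact that translations do not change $L^q$ norms. The paper uses precisely this, once over the stopped cylinders $\eta$ and once over the words $u$.

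Convexity does not ``lose everything.'' It discards only the interference between different translates, which the argument never uses. The rotational randomness survives in the random-walk average $\sum_I p_I F_{m-\ell}(O_I^{-1}V)$, which the spectral gap equidistributes, and the only cost is the Jacobian $|r_I|^{-k(q-1)}$.

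Your substitute does not give the estimate you state, for two reasons:
\begin{itemize}
\item $\|\sum_I p_I g_I\|_{L^2}^2$ contains cross terms $\langle g_I,g_J\rangle$. These depend on $\pi_V(t_I-t_J)$ and on two orbit points, so they are not values of a function on $\mathcal O_V$, and no Markov operator acts on them.
\item If you convexify in $L^2$ first, interpolation gives $\|h\|_q^q\lesssim \|h\|_1^{2-q}\|h\|_2^{2(q-1)}\lesssim(\sum_I p_I\|g_I\|_2^2)^{q-1}$. That is a criterion in terms of $L^2$ energies along the orbit, not the $L^q$ quantities $\mathcal E^V_{q,S}$ and $\dim_q^V\nu$ you invoke afterwards.
\end{itemize}

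\textbf{The claimed ``loss of $2^{\ell b(q-1)}$ from interpolation'' does not exist, and your exponent count double-counts.} Write $m=\ell+r$, so that $2^\ell\approx e^{n\lambda_{\max}}$ and $2^r\approx e^{\alpha n}$. The Besov weight $2^{m(q-1)(b-k)}$ times the Jacobian $2^{\ell k(q-1)}$ equals $2^{(q-1)(\ell b+r(b-k))}$, and this is the only source of the term $b(q-1)$.

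\begin{itemize}
\item Against the orbit average, whose weighted sum $\sum_j 2^{j(q-1)(S-k)}m_V(F_j)$ is finite, this requires $r(S-b)>\ell b$, i.e.\ $\alpha(S-b)>b\lambda_{\max}$.
\item Against the error term $2^{r(\sigma+k(q-1))}e^{-n\Theta_V}$, it requires $b(q-1)\lambda_{\max}+(\sigma+b(q-1))\alpha<\Theta_V$.
\end{itemize}
Compatibility of these two constraints is exactly the hypothesis. If instead you place $2^{\ell b(q-1)}$ on top of the Besov weight $2^{m(q-1)(b-k)}$, as you write, the main-term constraint becomes $r(S-b)>\ell(2b-k)$. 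That does not recover the stated condition, so your claim that balancing ``yields exactly the condition'' is not supported by your own count.

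\textbf{A smaller point on unequal contraction ratios.} Your contraction factor $\rho^n$ needs one more observation: every stopped word at level $n\lambda_{\max}$ has length at least $n$. Since $\|P_{a,V}\|\le 1$, you can then truncate the product of norms to the first $n$ symbols and use independence under $Q$.
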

Notice  that Theorem~\ref{thm:main} recovers
Theorem~\ref{thm: pre main} at the endpoint \(q=2\). Indeed, taking
\(b=s\), we have
$B^{(s-k)/2}_{2,2}(V)=H^{(s-k)/2}(V),$
and, upon letting \(S\uparrow\dim_2^V\nu\), the numerical condition in
Theorem~\ref{thm:main} converges to that of
Theorem~\ref{thm: pre main}. Since the latter inequality is strict,
Theorem~\ref{thm:main} applies for \(S\) sufficiently close to
\(\dim_2^V\nu\).

Let us now explain how the absolute-continuity part of
Theorem~\ref{thm: baby case} follows from Theorem~\ref{thm:main}. In the
setting of Theorem~\ref{thm: baby case}, the spectral-gap assumption
forces the rotations to generate \(\operatorname{SO}(3)\). Hence, for
\(k\in\{1,2\}\),
$\mathcal O_V=G_{3,k}
\text{ and }
\dim\mathcal O_V=2.$
We may therefore choose \(1<\sigma<q<2\), with both \(\sigma\) and
\(q\) arbitrarily close to \(1\). Moreover, the contraction ratio is
uniform,and the orbit-level operator
satisfies
$\|P_V\|_{L^2_0(\mathcal O_V)}
\leq
\|P\|_{L^2_0(\operatorname{SO}(3))}.$

By Feng and Hu \cite{feng2009dimension}, self-similar measures are exact
dimensional, and hence \(\dim_e\nu=\dim\nu\), where $\dim_e \nu$ is its entropy dimension (see also \cite{Solomyak2000Peres}). Moreover, by Shmerkin and
Solomyak \cite[Theorem~5.1 and Remark~5.2]{Shmerkin2016Solomyak},
$\lim_{q\downarrow1}\dim_q\nu=\dim_e\nu=\dim\nu.$
Since \(G=\operatorname{SO}(3)\),
Lemma~\ref{lem:full-orbit-energy-to-relative-lq} gives
$\dim_q^V\nu\geq\dim_q\nu
\text{ for every }V\in G_{3,k}.$

Suppose now that \(\dim\nu>k\) and
$\frac{\log\|P\|_{L^2_0(\operatorname{SO}(3))}}{\log|r|}
>
\frac{k}{\dim\nu-k}.$
As \(q\downarrow1\), \(\sigma\downarrow1\), \(b\downarrow k\), and
\(S\uparrow\dim\nu\), subject to
$1<\sigma<q
\text{ and }\qquad
k<b<S<\dim_q\nu,$
we have
$$b(q-1)
+
\frac{b\bigl(\sigma+b(q-1)\bigr)}{S-b}
\longrightarrow
\frac{k}{\dim\nu-k}.$$
The strict inequality above therefore allows us to choose
\(q,\sigma,b,S\) so that the hypothesis of
Theorem~\ref{thm:main} is satisfied. The theorem then gives an
\(L^q(V)\) density for \(\pi_V\nu\), and in particular
$\pi_V\nu\ll\operatorname{Leb}_V.$
This proves Part~\textup{(1)} of
Theorem~\ref{thm: baby case}.

\subsection{Sketch of proof, and further comments}
We begin by sketching the proof of Part~\textup{(2)} of
Theorem~\ref{thm: baby case}; afterwards, we explain the modifications
needed for the general result, Theorem~\ref{thm:main}. To highlight the
main idea, we restrict to line projections: so, \(d=3\) and \(k=1\), and
assume that the IFS has a common contraction ratio \(0<r<1\). So, $\Phi = \lbrace f_i(x)=r\cdot O_i\cdot x+t_i\rbrace_{i=1} ^n$ is a self-similar IFS on $\mathbb{R}^3$, $\mathbf{p}$ is a probability vector, and  $\nu = \nu_\mathbf{p}$ is the corresponding self-similar measure.

Fix \(b\in S^2\), let
$V_b:=\operatorname{span}\{b\}\in G_{3,1},$
and write \(\pi_b:=\pi_{V_b}\). We sketch the \(L^2\) conclusion,
corresponding taking \(s=1\) in \eqref{eq: baby condition}. Our aim is to prove that
$\xi\longmapsto \mathcal F_\xi(\pi_b\nu)$
belongs to \(L^2(\mathbb R)\). Write
$\lambda:=-\log r.$

\medskip
\noindent{\textbf{Step 1: Annular decomposition.}}
Let \(\alpha>0\) be an auxiliary parameter. We decompose the large
frequencies into the annuli
\[
e^{n(\alpha+\lambda)}
\leq |\xi|
<
e^{(n+1)(\alpha+\lambda)},
\qquad n\geq1.
\]
Since the Fourier transform is bounded, the contribution of bounded
frequencies is finite. Hence
\[
\begin{aligned}
\left\|
\xi\mapsto\mathcal F_\xi(\pi_b\nu)
\right\|_{L^2(\mathbb R)}^2
\leq {}&
C_0
+
\sum_{n=1}^{\infty}
\int_{e^{n(\alpha+\lambda)}}^{e^{(n+1)(\alpha+\lambda)}}
\left|\mathcal F_\xi(\pi_b\nu)\right|^2\,d\xi
\\
&+
\sum_{n=1}^{\infty}
\int_{-e^{(n+1)(\alpha+\lambda)}}^{-e^{n(\alpha+\lambda)}}
\left|\mathcal F_\xi(\pi_b\nu)\right|^2\,d\xi .
\end{aligned}
\]
In the remainder of the outline, we treat the positive-frequency
contribution; the negative-frequency contribution is dealt with similarly.
\medskip

\noindent{\textbf{Step 2: From self-similarity to spherical averages.}} 
The idea of this step is to use the self-similarity of $\nu$ to convert the Fourier transform in the prescribed
direction into an average over an \(n\)-step random-walk  over rotated
directions, governed by the rotations in the IFS with the corresponding weights. Then, via the \(L^2\) spectral gap, we can replace this random-walk average
by the invariant spherical average, while Sobolev embedding is used
 to control the error at the prescribed direction. Having precise control over the error term  is key to our method.
 
Thus, for a word \(u=(u_1,\ldots,u_n)\), write
$p_u:=p_{u_1}\cdots p_{u_n},
\,
O_u:=O_{u_1}\cdots O_{u_n}.$
Then
$f_u(x)=r^nO_ux+t_u$, and iterating the self-similarity relation we have,
$$\nu=\sum_{|u|=n}p_u\,f_u\nu.$$
Since
\(\mathcal F_\xi(\pi_b\nu)=\mathcal F_{\xi b}(\nu)\), we have
$\mathcal F_\xi(\pi_b\nu)
=
\sum_{|u|=n}
p_u
e^{-2\pi i\xi\langle b,t_u\rangle}
\mathcal F_{r^n\xi O_u^{-1}b}(\nu).$
The translations contribute only unimodular phases. Hence, by Jensen's
inequality,
\begin{equation}
\label{eq:sketch-jensen}
\left|\mathcal F_\xi(\pi_b\nu)\right|^2
\leq
\sum_{|u|=n}
p_u
\left|
\mathcal F_{r^n\xi O_u^{-1}b}(\nu)
\right|^2.
\end{equation}

Put
$R:=r^n|\xi|=e^{-n\lambda}|\xi|,$
and define \(F_R:G_{3,1}\to\mathbb R\) by
$F_R(V_c):=
\left|\mathcal F_{Rc}(\nu)\right|^2,
\, c\in S^2.$
This is well defined, since \(V_c=V_{-c}\) and
\(\left|\mathcal F_{-Rc}(\nu)\right|
=\left|\mathcal F_{Rc}(\nu)\right|\).
Since
$\|P\|_{L^2_0(\operatorname{SO}(3))}<1$
 the rotations \(O_i\) generate a dense subgroup of
\(\operatorname{SO}(3)\), so
$\operatorname{SO}(3)\cdot V_b=G_{3,1}.$
We abuse notation by denoting \(P\) also for the induced Markov operator on
\(L^2(G_{3,1},m)\), where \(m \in \mathcal{P}(G_{3,1})\) is the normalized $G$-invariant measure invariant, defined by
$PF(V):=\sum_i p_iF(O_i^{-1}V).$
Then the right-hand side of \eqref{eq:sketch-jensen} is precisely
$P^nF_R(V_b).$
Moreover, since this is the quotient action of the original operator
\(P\) on \(\operatorname{SO}(3)\),
$\|P\|_{L^2_0(G_{3,1})}
\leq
\|P\|_{L^2_0(\operatorname{SO}(3))}.$ See Lemma \ref{lem:ambient-gap-controls-orbit-sobolev}.

We now separate \(F_R\) into its mean and zero-mean parts:
$F_R
=
\int_{G_{3,1}}F_R\,dm
+
F_R^0.$

Fix
$\sigma>1=\frac12\dim G_{3,1}.$
By Sobolev embedding
$H^\sigma(G_{3,1})\hookrightarrow C^0(G_{3,1}),$ and so
\[
\begin{aligned}
P^nF_R(V_b)
&=
\int_{G_{3,1}}F_R\,dm
+
P^nF_R^0(V_b)
\\
&\leq
\int_{G_{3,1}}F_R\,dm
+
\left|P^nF_R^0(V_b)\right|
\\
&\leq
\int_{G_{3,1}}F_R\,dm
+
C_\sigma
\left\|P^nF_R^0\right\|_{H^\sigma(G_{3,1})}
\\
&\leq
\int_{G_{3,1}}F_R\,dm
+
C_\sigma
\left\|P\right\|_
{H^\sigma_0(G_{3,1})}^{\,n}
\left\|F_R^0\right\|_{H^\sigma(G_{3,1})}.
\end{aligned}
\] 
As explained after Theorem \ref{thm: pre main}, 
$\left\|P \right\|_{H^\sigma_0(G_{3,1})}
=
\left\|P\right\|_{L^2_0(G_{3,1})}.$
 Moreover, the compact support of \(\nu\) gives
$\left\|F_R^0\right\|_{H^\sigma(G_{3,1})}
\lesssim_{\nu,\sigma}
(1+R)^\sigma.$
Finally, the invariant average on \(G_{3,1}\) is the spherical average:
$\int_{G_{3,1}}F_R\,dm
=
\int_{S^2}
\left|\mathcal F_{R\omega}(\nu)\right|^2
\,d\sigma_{S^2}(\omega).$
Combining these estimates yields
\[
\left|\mathcal F_\xi(\pi_b\nu)\right|^2
\lesssim
\int_{S^2}
\left|
\mathcal F_{e^{-n\lambda}|\xi|\omega}(\nu)
\right|^2
\,d\sigma_{S^2}(\omega)
+
C_{\nu,\sigma}
\left(1+e^{-n\lambda}|\xi|\right)^\sigma
\left\|P\right\|_{L^2_0(\operatorname{SO}(3))}^{\,n}.
\]

\medskip
\noindent{\textbf{Step 3: Spherical averages and the final parameter
choice.}}
We insert the estimate from Step~2 into the positive-frequency
contribution from Step~1. It remains to control the main term
\[
(A)
:=
\sum_{n=1}^{\infty}
\int_{e^{n(\alpha+\lambda)}}^{e^{(n+1)(\alpha+\lambda)}}
\int_{S^2}
\left|
\mathcal F_{e^{-n\lambda}q\omega}(\nu)
\right|^2
\,d\sigma_{S^2}(\omega)\,dq
\]
and the error term
\[
(B)
:=
\sum_{n=1}^{\infty}
\int_{e^{n(\alpha+\lambda)}}^{e^{(n+1)(\alpha+\lambda)}}
\left(1+e^{-n\lambda}q\right)^\sigma
\left\|P\right\|_{L^2_0(\operatorname{SO}(3))}^{\,n}
\,dq .
\]

On the \(n\)-th annulus,
$e^{-n\lambda}q\asymp e^{n\alpha},$
while the length of the annulus is comparable to
\(e^{n(\alpha+\lambda)}\). Therefore
$(B)
\lesssim
\sum_{n=1}^{\infty}
\exp\left(n\bigl((\sigma+1)\alpha+\lambda\bigr)\right)
\left\|P\right\|_{L^2_0(\operatorname{SO}(3))}^{\,n}.$
Thus \((B)<\infty\) provided
\begin{equation}
\label{eq:sketch-error-condition}
-\log
\left\|P\right\|_{L^2_0(\operatorname{SO}(3))}
>
(\sigma+1)\alpha+\lambda.
\end{equation}

We next estimate \((A)\). In its \(n\)-th summand, make the change of
variables
$x=e^{-n\lambda}q,
\,
dq=e^{n\lambda}\,dx.$
The resulting interval is
$e^{n\alpha}
\leq x
<
e^{n\alpha+\alpha+\lambda}.$
On this interval,
$e^{n\lambda}\leq x^{\lambda/\alpha},$
and these intervals have uniformly bounded overlap as \(n\) varies.
Consequently,
\[
(A)
\lesssim
\int_{S^2}\int_1^\infty
x^{\lambda/\alpha}
\left|
\mathcal F_{x\omega}(\nu)
\right|^2
\,dx\,d\sigma_{S^2}(\omega).
\]
The spherical-average energy estimate of Peres and Schlag
\cite[Proposition~2.2]{Peres200Schlag}, stated below as
Lemma~\ref{lem:peres-schlag-line}, gives
\[
\int_{S^2}\int_1^\infty
x^{\lambda/\alpha}
\left|
\mathcal F_{x\omega}(\nu)
\right|^2
\,dx\,d\sigma_{S^2}(\omega)
\lesssim
I_{1+\lambda/\alpha}(\nu).
\]
Hence \((A)<\infty\) whenever
\begin{equation}
\label{eq:sketch-main-condition}
1+\frac{\lambda}{\alpha}<\dim_2\nu.
\end{equation}

It remains to choose \(\alpha\) so that
\eqref{eq:sketch-error-condition} and
\eqref{eq:sketch-main-condition} hold simultaneously. Equivalently, we
need
$\frac{\lambda}{\dim_2\nu-1}
<
\alpha
<
\frac{
-\log\left\|P\right\|_{L^2_0(\operatorname{SO}(3))}-\lambda
}{\sigma+1}.$
Since \(\sigma>1\) may be chosen arbitrarily close to \(1\), such an
\(\alpha\) exists precisely under the strict inequality
\[
-\log\left\|P\right\|_{L^2_0(\operatorname{SO}(3))}
>
\frac{\dim_2\nu+1}{\dim_2\nu-1}\,\lambda,
\]
which is exactly \eqref{eq: baby condition} with \(s=1\). We conclude
that
$q\longmapsto\mathcal F_q(\pi_b\nu)$
belongs to \(L^2(\mathbb R)\), and hence that \(\pi_b\nu\) has an
\(L^2\) density.
\medskip

\noindent{\textbf{The general case and organization.}}
The Sobolev conclusion in Theorem~\ref{thm: baby case} follows from the
same argument, starting in Step~1 with
$\left\|
\xi\mapsto
|\xi|^{(s-1)/2}\mathcal F_\xi(\pi_b\nu)
\right\|_{L^2(\mathbb R)}^2.$

For the \(L^2\)-case of Theorems ~\ref{thm:main} and \ref{thm: pre main}, the spherical average
over \(S^2\) is replaced by the average over the orbit
\(\mathcal O_V=G\cdot V\). This leads to the relative dimension
\(\dim_2^V\nu\); see the discussion in Section~\ref{Section Peres schlag}.

For \(1<q<2\), we replace the decomposition in Step~1 by a
Littlewood-Paley decomposition. At scale \(m\), the analogue of the
spherical-average function is
$F_{\mu,m}(W)
:=
\left\|
(\pi_W\mu)*\psi_m^W
\right\|_{L^q(W)}^q,
\, W\in\mathcal O_V.$
Self-similarity, convexity, and spectral gap are then applied to
\(F_{\mu,m}\) in the spirit of Step~2, while its orbit average is
controlled by the relative \(L^q\)-dimension \(\dim_q^V\nu\).

The restriction \(\sigma<q\) comes from the Sobolev regularity available
for \(F_{\mu,m}\). Indeed, let \(W=W_y\) be local coordinates on
\(\mathcal O_V\), and choose a smooth family of orthogonal
identifications
$T_y:\mathbb R^k\longrightarrow W_y.$
Writing
\[
G_y(x)
:=
\bigl((\pi_{W_y}\mu)*\psi_m^{W_y}\bigr)(T_yx),
\qquad x\in\mathbb R^k,
\]
we have
$F_{\mu,m}(W_y)
=
\int_{\mathbb R^k}|G_y(x)|^q\,dx.$
Differentiating in an orbit variable \(y_j\) gives
\[
\partial_{y_j}F_{\mu,m}(W_y)
=
q\,\operatorname{Re}
\int_{\mathbb R^k}
|G_y|^{q-2}\overline{G_y}\,
\partial_{y_j}G_y\,dx.
\]
For \(1<q<2\), the map
$z\longmapsto |z|^{q-2}z$
is only \((q-1)\)-H\"older continuous. Consequently, the first
derivatives of \(F_{\mu,m}\) have only fractional regularity of order
strictly less than \(q-1\), which yields
\[
\|F_{\mu,m}\|_{H^\sigma(\mathcal O_V)}
\lesssim
2^{m(\sigma+k(q-1))}
\qquad\text{for }\sigma<q.
\]
This is the estimate needed to control the pointwise mixing error. At
\(\sigma=q\), the available H\"older estimate does not give the
endpoint Sobolev bound required by the argument. When \(q=2\), the
functional is quadratic, no singular nonlinearity appears upon
differentiation, and no upper restriction on \(\sigma\) is needed. We do not know whether this precise upper restriction is necessary; nevertheless, some loss relative to the quadratic case \(q=2\) is natural, since the map \(z\mapsto |z|^q\) is not smooth at the origin.

When the contraction ratios are not uniform, we use a model,
tree-type decomposition of the IFS based on the contraction moduli,
inspired by the decompositions in our work
\cite{algom2023polynomial} and in the work of Baker and Sahlsten
\cite{Baker2023Sahl}, and \cite{Algom2022Baker}. The mixing estimate is then governed by the
operator norms of \(P_{a,V}\), averaged with weights
\(\widetilde p_a\), exactly as in the hypothesis of
Theorem~\ref{thm:main}. These estimates are established in
Section~\ref{Section equi}. A probabilistic argument is then used to
control the error term \((B)\), which is more delicate in the
non-uniformly contracting case. A further refinement allows one to
replace the largest contraction rate by the Lyapunov exponent; see
Remark~\ref{rmk: lyp}.

Our use of random walks is inspired by Varj\'u's work
\cite{Varju2013random}, while the general strategy is influenced by our
previous work on Fourier decay for stationary measures
\cite{algom2020decay,algom2021decay,algom2023polynomial,algom2024plane}.
\medskip

We end this Introduction with a few more comments.

\begin{remark}
\label{rmk: lyp}
Assume the setting and notation of Theorem~\ref{thm:main}. Recall that the
Lyapunov exponent of the self-similar measure is
$\chi:=-\sum_i p_i\log |r_i|.$
Let
$R:=\max_{\rho,\rho'\in\mathcal A}
\left|-\log \rho+\log \rho'\right|.$
Then the conclusion of Theorem~\ref{thm:main} remains valid if the spectral
inequality in Theorem~\ref{thm:main} is replaced by the following two
conditions. Let \(1<q\leq2\), write \(q'=q/(q-1)\), and suppose that
$k<b<S<\dim_q^V\nu.$
Set
$C_{q,b,S,\sigma}
:=
b(q-1)
+
\frac{b\bigl(\sigma+b(q-1)\bigr)}{S-b}.$
Assume that
$\frac12\dim\mathcal O_V<\sigma<q
\qquad\text{if }1<q<2,$
and
$\frac12\dim\mathcal O_V<\sigma
\qquad\text{if }q=2,$
and that for some \(\varepsilon\in(0,1)\),
\[
-\log\left(
\sum_{\rho\in\mathcal A}
\widetilde p_\rho
\left\|P_{\rho,V}\right\|_
{L^2_0(\mathcal O_V)}
\right)
>
C_{q,b,S,\sigma}\cdot \frac{\chi}{1-\varepsilon},
\]
and
$\frac{\varepsilon^2}{1-\varepsilon}
>
C_{q,b,S,\sigma}\cdot \frac{R^2}{2\chi}.$

When \(R=0\), all contractions have the same modulus,
\(\chi=-\log |r|\), and letting \(\varepsilon\downarrow0\) recovers the
uniform-contraction condition in Theorem~\ref{thm:main}.
\end{remark}

Our argument also yields explicit bounds on the Fourier dimension of the measures as in Theorem~\ref{thm:main}:
\begin{claim} \label{Claim: fourier}
Let \(\nu\) be as in Theorem~\ref{thm:main}. For
\(a\in\mathcal A\), let \(P_a^G\) be the Markov operator on
\(L^2(\operatorname{SO}(d))\) defined by
$P_a^G F(g)
=
\sum_{i\in I_a}\frac{p_i}{\widetilde p_a}F(O_i^{-1}g).$
Fix \(\sigma>\frac{d-1}{2}\), and set
$\Lambda
:=
-\log\left(
\sum_a \widetilde p_a
\left\|P_a^G\right\|_{L^2_0(\operatorname{SO}(d))}
\right).$ 
Assume that
 $\Gamma:=\frac{\Lambda}{\max_i(-\log |r_i|)}>0$,  and that for some \(\beta>0\),
\[
\int_{S^{d-1}}| \mathcal{F}_{R \omega} (\nu)|^2\,d\sigma(\omega)
\lesssim R^{-\beta}.
\]
Then, 
$\dim_F\nu
\geq
\min\left\{
d,\,
\frac{\beta\Gamma}{\beta+\sigma+\Gamma}
\right\}.$ 
\end{claim}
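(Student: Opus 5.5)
The plan is to prove a pointwise Fourier decay estimate $|\mathcal F_\xi(\nu)|^2\lesssim|\xi|^{-s}$ with $s=\frac{\beta\Gamma}{\beta+\sigma+\Gamma}$. This is Step~2 of the sketch of proof, now in the ambient space. Directions $\xi/|\xi|$ range over $S^{d-1}=\mathrm{SO}(d)\cdot e_1$, which is a homogeneous space of dimension $d-1$; this is why $\sigma>\frac{d-1}{2}$ is exactly what Sobolev embedding $H^\sigma(S^{d-1})\hookrightarrow C^0$ requires. The assumption $\Gamma>0$ forces every $\|P_a^G\|_{L^2_0}<1$ with positive weight. Lemma~\ref{lem:ambient-gap-controls-orbit-sobolev} then lets us replace $\mathrm{SO}(d)$ by the quotient $S^{d-1}$, with $\|P_a\|_{H^\sigma_0(S^{d-1})}=\|P_a\|_{L^2_0(S^{d-1})}\le\|P_a^G\|_{L^2_0(\mathrm{SO}(d))}$. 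For $R>0$ put $F_R(\omega):=|\mathcal F_{R\omega}(\nu)|^2$ on $S^{d-1}$. Compact support of $\nu$ gives $\|F_R-\int F_R\|_{H^\sigma}\lesssim(1+R)^\sigma$, and by hypothesis $\int F_R\,d\sigma\lesssim R^{-\beta}$.

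\textbf{Uniform ratio case.} Suppose first that all $|r_i|=r$, and set $\lambda=-\log r$. Iterating self-similarity and applying Jensen's inequality exactly as in \eqref{eq:sketch-jensen} gives
\[
|\mathcal F_\xi(\nu)|^2\le P^nF_{r^n|\xi|}(\xi/|\xi|).
\]
Splitting $F_{r^n|\xi|}$ into its mean and its zero-mean part, and applying Sobolev embedding together with the spectral gap, we obtain
\[
|\mathcal F_\xi(\nu)|^2\lesssim R^{-\beta}+R^{\sigma}e^{-n\Lambda},\qquad R:=e^{-n\lambda}|\xi|.
\]
Since $e^{-n\Lambda}=(|\xi|/R)^{-\Gamma}$, the bound reads $R^{-\beta}+R^{\sigma+\Gamma}|\xi|^{-\Gamma}$. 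We then choose $n$ so that $R\asymp|\xi|^{\Gamma/(\beta+\sigma+\Gamma)}$, which is possible up to a factor $e^{\lambda}$. This balances the two terms and gives $|\mathcal F_\xi(\nu)|^2\lesssim|\xi|^{-\beta\Gamma/(\beta+\sigma+\Gamma)}$. Comparing with the definition \eqref{eq: fourier dim}, this is the claim, with the $\min\{d,\cdot\}$ cap applied trivially.

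\textbf{Non-uniform ratios.} This is the expected obstacle. Here I will use a stopping time instead of a fixed length. Fix the target $R$ as above. Expand $\nu$ along the words $u$ at which $|r_u|$ first drops to at most $R/|\xi|$. The stopping rule depends only on the sequence of moduli $\mathbf a=(a_1,\dots,a_m)$. Conditionally on $\mathbf a$, the rotation $O_u$ is a product of independent draws with laws $p_i/\widetilde p_{a_j}$, $i\in I_{a_j}$. Jensen's inequality then gives
\[
|\mathcal F_\xi(\nu)|^2\le\sum_{\mathbf a\ \text{stopped}}\Big(\prod_j\widetilde p_{a_j}\Big)\,P_{a_1}\cdots P_{a_m}F_{R_{\mathbf a}}(\xi/|\xi|),\qquad R_{\mathbf a}:=a_1\cdots a_m|\xi|\in[\rho_{\min}R,R].
\]
For the main term, the mean parts contribute $\lesssim\sup_{R'\in[\rho_{\min}R,R]}R'^{-\beta}\lesssim R^{-\beta}$. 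For the error term, Sobolev embedding gives the bound
\[
R^\sigma\sum_{\mathbf a\ \text{stopped}}\prod_j w_{a_j},\qquad w_a:=\widetilde p_a\|P_a\|_{L^2_0},
\]
where we note $w_a\le\widetilde p_a$. Every stopped sequence has length $m\ge n_0:=\lfloor\log(|\xi|/R)/\max_i(-\log|r_i|)\rfloor$. Distinct stopped sequences sharing the same length-$n_0$ prefix have total extension weight at most $1$. Hence the sum is at most $\big(\sum_a w_a\big)^{n_0}=e^{-n_0\Lambda}\lesssim(|\xi|/R)^{-\Gamma}$. This yields the same two-term bound as in the uniform case, and the same choice of $R$ concludes the proof.
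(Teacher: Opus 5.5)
Your proposal is correct and is essentially the paper's proof. The shared steps are: a stopped-cylinder expansion at the level where the residual frequency reaches the target; Jensen; Sobolev embedding on the $(d-1)$-dimensional space of directions, with the spectral gap transferred via Lemma~\ref{lem:ambient-gap-controls-orbit-sobolev}; bounding the stopped product of operator norms by truncating to the first $\approx\log(|\xi|/R)/\max_i(-\log|r_i|)$ symbols (your ``total extension weight at most $1$'' is exactly the paper's independence argument); and the balancing choice $R\asymp|\xi|^{\Gamma/(\beta+\sigma+\Gamma)}$, which is the paper's optimal $\theta=(\beta+\sigma)/(\beta+\sigma+\Gamma)$.

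One harmless slip: $\Gamma>0$ only forces $\|P_a^G\|_{L^2_0}<1$ for \emph{some} $a$, not every $a$, but your argument only ever uses $\sum_a w_a=e^{-\Lambda}$, so nothing breaks.
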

The quantity $\int_{S^{d-1}}| \mathcal{F}_{R \omega} (\nu)|^2\,d\sigma(\omega)
$  is called the spherical average of
$\nu$ at radius $R$; such averages play a classical role in
geometric measure theory. See e.g. the works of Mattila
\cite{Mattila1987spherical} and Sj\"olin \cite{sjolin1993estimates}, 
and the more recent work of Du, Guth, Ou, Wang, Wilson, and Zhang
\cite{DuGuthOuWangWilsonZhang2021}.

As outlined explicitly in Sahlsten's survey \cite{sahlsten2025survey}, 
Varj\'u  obtained polylogarithmic Fourier
decay for self-similar measures in dimensions \(d\geq3\) with dense rotations in \(\mathrm{SO}(d)\). It is well known that Varj\'u's method gives positive Fourier dimension under the stronger assumption of spectral gap.  We also point out that recently Baker, Khalil, and Sahlsten \cite{baker2024Kahlil} gave logarithmic decay rates under a weaker Diophantine assumption on the rotations or the contraction ratios for planar self-similar measures.

\subsection{Acknowledgements}
We thank Simon Baker,  Mike Hochman, Xiong Jin,  Pablo Shmerkin, P\'eter Varj\'u, and Meng Wu for helpful comments and discussions. We are  grateful to Tuomas Orponen for historical remarks about the Marstrand-Mattila projection theorem,  and to Shai Evra for  background on spectral gaps and the Lubotzky--Phillips--Sarnak construction.

\section{From spectral gap to effective equidistribution on the $G$-orbit} \label{Section equi}
The purpose of this section is to convert the spectral-gap hypotheses in
Theorem~\ref{thm:main} into quantitative mixing estimates on the orbit
\(\mathcal O_V\), to be used in the proof of the theorem.  To treat non-uniform contraction ratios, we  introduce a model
decomposition of the IFS according to the contraction moduli. Related
decompositions have recently appeared in, for example,
\cite{algom2023polynomial,baker2024polynomial,Baker2026dis,algom2024plane},
although in settings different from the present one.

The underlying idea is related to the \(L^2\)-smoothing argument used by
Kittle and Kogler \cite[Section~7]{kittle2024absolute} for random walks on
compact orthogonal groups. Here, however, we work directly with Sobolev
spaces on the orbit \(\mathcal O_V\), and only with the particular
quantities arising in our (generally, Besov) estimates. This allows us to retain explicit
control of the error terms and to avoid some of the auxiliary constants
that arise in more general \(L^2\)-smoothing arguments.
\subsection{A model decomposition} \label{Section model}
Let
$\Phi=\{f_i(x)=r_i O_i x+t_i\}_{i=1}^n$
be a self-similar IFS on \(\mathbb R^d\), \(d\geq 3\), with probability vector
\(\mathbf p=(p_1,\ldots,p_n)\). Let \(\nu=\nu_{\mathbf p}\) be the
corresponding self-similar measure.  Write
$a_i:=|r_i|,\qquad 0<a_i<1.$
Let
$\{a^{(1)},\ldots,a^{(m)}\}=\{a_i:1\leq i\leq n\}$ 
be the distinct contraction moduli.  For \(1\leq j\leq m\), set
\[
\mathcal I_j:=\{i\in\{1,\ldots,n\}:a_i=a^{(j)}\},
        \,
\Phi_j:=\{f_i:i\in\mathcal I_j\}, \text{ and }
\widetilde p_j:=\sum_{i\in\mathcal I_j}p_i.
\]
Thus each subsystem \(\Phi_j\) has a common contraction modulus
\(a^{(j)}\), but the orthogonal parts and translations may vary inside
\(\Phi_j\). Note that we allow for $\Phi_j$ to be a singleton.

Let
\[
\Omega:=\{1,\ldots,m\}^{\mathbb N},
        \qquad
Q:=(\widetilde p_1,\ldots,\widetilde p_m)^{\mathbb N}.
\]
For a finite word \(\eta=(\eta_1,\ldots,\eta_q)\in\{1,\ldots,m\}^*\), write
\[
[\eta]:=\{\omega\in\Omega:\omega_1=\eta_1,\ldots,\omega_q=\eta_q\} \subseteq \Omega
\]
for the corresponding cylinder.  We also set
\[
X^{(\eta)}_{|\eta|}
:=
\mathcal I_{\eta_1}\times\cdots\times \mathcal I_{\eta_{|\eta|}}.
\]
Thus an element \(u\in X^{(\eta)}_{|\eta|}\) is a word
\(u=(u_1,\ldots,u_{|\eta|})\)  such that
\(u_\ell\in \mathcal I_{\eta_\ell}\) for every \(\ell\).

For \(u=(u_1,\ldots,u_q)\), define maps and weights via
\[
f_u:=f_{u_1}\circ\cdots\circ f_{u_q},
        \qquad
p_u:=\prod_{\ell=1}^q p_{u_\ell}.
\]
If \(u\in X^{(\eta)}_{|\eta|}\), define the conditional weight
\[
p^{(\eta)}(u)
:=
\prod_{\ell=1}^{|\eta|}
\frac{p_{u_\ell}}{\widetilde p_{\eta_\ell}}.
\]
Thus,
\[
Q([\eta])\,p^{(\eta)}(u)=p_u,
        \qquad u\in X^{(\eta)}_{|\eta|}.
\]

It will be convenient to write
\[
\lambda_j:=-\log a^{(j)}>0,
        \qquad
\chi:=\sum_{j=1}^m \widetilde p_j\lambda_j.
\]
Thus, \(\chi\) is both the Lyapunov exponent of the original self-similar measure, and of the  contraction process in the
model space \((\Omega,Q)\).  We also write
\[
R:=\max_{1\leq i,j\leq m}|\lambda_i-\lambda_j|
   =
   \max_{1\leq i,j\leq m}
   \left|\log a^{(i)}-\log a^{(j)}\right|.
\]

For \(L>0\), define the stopping time
$\tau_L:\Omega\to\mathbb N$ 
by
\begin{equation} \label{eq: stopping time}
\tau_L(\omega)
:=
\min\left\{
q\in\mathbb N:
\prod_{\ell=1}^q a^{(\omega_\ell)}<e^{-L}
\right\}, \quad \omega \in \Omega.
\end{equation}
Equivalently, if $S_q(\omega):=\sum_{\ell=1}^q \lambda_{\omega_\ell},$
then
$\tau_L(\omega)=\min\{q\in\mathbb N:S_q(\omega)>L\}.$
Finally, define the cut-set
\begin{equation} \label{eq: omega n}
\Omega_L
:=
\{\eta:\eta=\omega|_{\tau_L(\omega)}
        \text{ for some }\omega\in\Omega\}.
\end{equation}
Equivalently, \(\eta\in\Omega_L\) if and only if
$\prod_{\ell=1}^{|\eta|}a^{(\eta_\ell)}<e^{-L}$
and, when \(|\eta|\geq 2\),
$\prod_{\ell=1}^{|\eta|-1}a^{(\eta_\ell)}\geq e^{-L}.$
The set \(\Omega_L\) is finite, since the numbers \(\lambda_j\) are uniformly
bounded above and below away from \(0\).

\begin{lemma}
\label{lem:stopping-model}
The following holds in the Bernoulli space $(\Omega, Q)$.

\begin{enumerate}
\item For every \(L>0\),
\[
\nu
=
\int_{\Omega}
\left[
\sum_{u\in X^{(\omega|_{\tau_L(\omega)})}_{\tau_L(\omega)}}
p^{(\omega|_{\tau_L(\omega)})}(u)\,f_u\nu
\right]\,dQ(\omega).
\]
Equivalently, $\nu
=
\sum_{\eta\in\Omega_L}
Q([\eta])
\sum_{u\in X^{(\eta)}_{|\eta|}}
p^{(\eta)}(u)\,f_u\nu.$

\item For every \(\varepsilon\in(0,1)\),
\[
Q\left(
\tau_L\leq \frac{(1-\varepsilon)L}{\chi}
\right)
\leq
\exp\left(
-\frac{2\varepsilon^2\chi}{(1-\varepsilon)R^2}\,L
\right),
\]
with the convention that the right hand side is \(0\) if \(R=0\). In fact, one has the sharper Chernoff--Cramér bound
\[
Q\left(
\tau_L\leq \frac{(1-\varepsilon)L}{\chi}
\right)
\leq
\exp\left(
-\frac{1-\varepsilon}{\chi}\,
I\left(\frac{\chi}{1-\varepsilon}\right)L
\right), 
\]
\end{enumerate}
$\text{ where }
I(x):=\sup_{\theta\geq 0}
\left\{
\theta x-\log\left(
\sum_{j=1}^m \widetilde p_j e^{\theta\lambda_j}
\right)
\right\}.$
\end{lemma}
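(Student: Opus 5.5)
For Part (1), the plan is to realize the self-similarity relation as a stopped random walk. First I will check that $\Omega_L$ is a finite cut-set of $\Omega$. Since $\lambda_j\ge\min_j\lambda_j>0$, we have $\tau_L\le \lceil L/\min_j\lambda_j\rceil+1$ everywhere, and the cylinders $[\eta]$, $\eta\in\Omega_L$, partition $\Omega$. Indeed, every $\omega$ lies in exactly one of them, namely $[\omega|_{\tau_L(\omega)}]$. Moreover, no element of $\Omega_L$ is a proper prefix of another: if $\eta\in\Omega_L$ and $\eta'$ strictly extends $\eta$, then the partial products of $\eta'$ already drop below $e^{-L}$ at length $|\eta|<|\eta'|$, so $\eta'\notin\Omega_L$.

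The key identity is obtained by a stopping-time induction, proved by induction on the maximal length $N$ of words in a finite cut-set. The claim is that for every finite cut-set $C$ (a finite prefix-free set of words whose cylinders partition $\Omega$),
\[\nu=\sum_{\eta\in C}\sum_{u\in X^{(\eta)}_{|\eta|}}p_u\,f_u\nu.\]
The case $N=1$ is exactly $\nu=\sum_i p_if_i\nu$, after grouping the indices $i$ by their class $\mathcal I_j$. For the inductive step, take a word $\eta\in C$ of maximal length $N$. All $m$ one-letter extensions of its parent $\eta^-$ then belong to $C$, since otherwise the cylinders of $C$ would not cover $[\eta^-]$. These $m$ terms sum to
\[\sum_{u'\in X^{(\eta^-)}}p_{u'}\,f_{u'}\Bigl(\sum_i p_if_i\nu\Bigr)=\sum_{u'}p_{u'}f_{u'}\nu.\]
Replacing the $m$ extensions by $\eta^-$ therefore gives a cut-set with fewer words of length $N$, and the induction closes. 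Finally, I will substitute $p_u=Q([\eta])\,p^{(\eta)}(u)$. The second form in Part (1) follows directly, and the integral form follows because the integrand is constant on each cylinder $[\eta]$, $\eta\in\Omega_L$.

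For Part (2), note that $\tau_L\le n$ if and only if $S_n>L$, since $S_q$ is increasing in $q$. Put $n:=\lfloor (1-\varepsilon)L/\chi\rfloor$. Then
\[Q\bigl(\tau_L\le (1-\varepsilon)L/\chi\bigr)=Q(S_n>L)\le Q\bigl(S_n-n\chi> L-n\chi\bigr),\]
and $L-n\chi\ge \varepsilon L$. The increments $\lambda_{\omega_\ell}$ are i.i.d. and take values in an interval of length $R$, so Hoeffding's inequality bounds this probability by $\exp(-2(\varepsilon L)^2/(nR^2))$. Since $n\le(1-\varepsilon)L/\chi$, this is at most $\exp\bigl(-2\varepsilon^2\chi L/((1-\varepsilon)R^2)\bigr)$. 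When $R=0$ the increments are deterministic, so $S_n=n\chi\le(1-\varepsilon)L<L$ and the probability is $0$.

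For the Chernoff–Cramér form, fix $\theta\ge0$ and apply Markov's inequality:
\[Q(S_n>L)\le e^{-\theta L}\Bigl(\sum_j\widetilde p_je^{\theta\lambda_j}\Bigr)^n=\exp\bigl(-\theta L+n\Lambda(\theta)\bigr),\]
where $\Lambda(\theta):=\log\sum_j\widetilde p_je^{\theta\lambda_j}\ge0$ for $\theta\ge0$. Because $\Lambda(\theta)\ge0$, we may replace $n$ by the larger value $(1-\varepsilon)L/\chi$. The exponent becomes $-\frac{(1-\varepsilon)L}{\chi}\bigl(\theta x-\Lambda(\theta)\bigr)$ with $x=\chi/(1-\varepsilon)$, and taking the supremum over $\theta\ge0$ gives the stated bound.

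The only point needing care is the combinatorial cut-set induction in Part (1); Part (2) is a standard large-deviation estimate.
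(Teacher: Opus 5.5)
Your proposal is correct; Part (1) follows a different route from the paper, and Part (2) is essentially the paper's argument.

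\textbf{Part (1).} You argue bottom-up in the model space $\Omega$. You check that $\Omega_L$ is a finite prefix-free cut-set. You then prove the identity for an arbitrary finite cut-set by repeatedly collapsing a complete sibling family $\{\eta^-j\}_{j=1}^m$ of maximal-length words into its parent. Each collapse is one application of $\nu=\sum_i p_if_i\nu$, after grouping $\{1,\ldots,n\}=\bigsqcup_j\mathcal I_j$.

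The paper argues top-down in the original symbolic space $\{1,\ldots,n\}^{\mathbb N}$. It defines the cut-set $\mathcal W_L$ of words $\rho$ with $a_\rho<e^{-L}\le a_{\rho^-}$. It iterates self-similarity once, to a uniform depth $M$ exceeding all lengths in $\mathcal W_L$. It groups the level-$M$ words by their unique prefix in $\mathcal W_L$ and resums the tails back to $\nu$. Only then does it identify $\mathcal W_L=\bigsqcup_{\eta\in\Omega_L}X^{(\eta)}_{|\eta|}$ and use $Q([\eta])p^{(\eta)}(u)=p_u$.

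The paper's route needs no induction and no sibling-completeness lemma; it only uses that every long word has exactly one prefix in the cut-set. Yours never leaves $\Omega$ and is phrased as a reusable statement for any finite cut-set. The paper's regrouping argument would work in that generality too, so neither route is stronger.

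\textbf{Part (2).} You reproduce the paper's Markov/Chernoff argument, including the use of $\Lambda(\theta)\ge 0$ to replace $\lfloor (1-\varepsilon)L/\chi\rfloor$ by $(1-\varepsilon)L/\chi$. The only difference is the explicit bound. You quote Hoeffding's inequality for the centered sum directly. The paper instead derives it from the Chernoff exponent via Hoeffding's lemma $\Lambda(\theta)-\theta\chi\le\theta^2R^2/8$. That derivation makes it visible that the Hoeffding form is a weakening of the Cram\'er form.

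One small point: when $\lfloor(1-\varepsilon)L/\chi\rfloor=0$, the event $\{\tau_L\le (1-\varepsilon)L/\chi\}$ is empty. So you only need Hoeffding for $n\ge1$, which avoids the division by $n$.
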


\begin{proof}
We begin with Part (1).  Let $\Sigma:=\{1,\ldots,n\}^{\mathbb N}$ 
be the usual symbolic space for the original IFS, endowed with the Bernoulli
measure \(\mathbf p^{\mathbb N}\).  For a finite word
\(\rho=(\rho_1,\ldots,\rho_q)\in\{1,\ldots,n\}^q\), write
\[
f_\rho=f_{\rho_1}\circ\cdots\circ f_{\rho_q},
        \qquad
p_\rho=\prod_{\ell=1}^q p_{\rho_\ell},
        \qquad
a_\rho=\prod_{\ell=1}^q a_{\rho_\ell}.
\]

We can define a cutset directly in $\Sigma$ via
\[
\mathcal W_L
:=
\left\{
\rho\in\{1,\ldots,n\}^*:
a_\rho<e^{-L}
\text{ and }
a_{\rho^-}\geq e^{-L}
\right\},
\]
where \(\rho^-\) denotes the word obtained from \(\rho\) by deleting its last
letter, and where \(a_{\emptyset}=1\).  Similarly to $\Omega_L$, the set \(\mathcal W_L\) is also finite.

We first claim that
\[
\nu=\sum_{\rho\in\mathcal W_L}p_\rho f_\rho\nu.
\]
Indeed, choose \(M\) larger than the length of every word in \(\mathcal W_L\).
By iterating the self-similar identity \(\nu=\sum_i p_i f_i\nu\), we have
\[
\nu=\sum_{|I|=M}p_I f_I\nu.
\]
Every word \(I\) of length \(M\) has a unique prefix
\(\rho\in\mathcal W_L\).  Grouping the preceding sum according to this prefix,
we obtain
\[
\nu
=
\sum_{\rho\in\mathcal W_L}
\sum_{\substack{|I|=M\\ I|_{|\rho|}=\rho}}
p_I f_I\nu.
\]
Writing \(I=\rho J\), this becomes
\[
\nu
=
\sum_{\rho\in\mathcal W_L}
p_\rho f_\rho
\left(
\sum_{|J|=M-|\rho|}p_J f_J\nu
\right).
\]
The inner sum is again \(\nu\), by self-similarity.  Hence
\begin{equation} \label{eq: for nu}
\nu=\sum_{\rho\in\mathcal W_L}p_\rho f_\rho\nu.
\end{equation}

Now we rewrite this cutset decomposition in the two-stage model.  Given
\(\eta\in\Omega_L\) and \(u\in X^{(\eta)}_{|\eta|}\), the word \(u\) belongs to
\(\mathcal W_L\).  Conversely, every \(\rho\in\mathcal W_L\) determines a unique
\(\eta\in\Omega_L\), namely the word recording the contraction class of each
letter of \(\rho\).  Therefore
\[
\mathcal W_L
=
\bigsqcup_{\eta\in\Omega_L} X^{(\eta)}_{|\eta|}.
\]
Moreover, for \(u\in X^{(\eta)}_{|\eta|}\),
\[
Q([\eta])p^{(\eta)}(u)
=
\prod_{\ell=1}^{|\eta|}\widetilde p_{\eta_\ell}
\prod_{\ell=1}^{|\eta|}
\frac{p_{u_\ell}}{\widetilde p_{\eta_\ell}}
=
\prod_{\ell=1}^{|\eta|}p_{u_\ell}
=
p_u.
\]
Thus
\[
\sum_{\rho\in\mathcal W_L}p_\rho f_\rho\nu
=
\sum_{\eta\in\Omega_L}
Q([\eta])
\sum_{u\in X^{(\eta)}_{|\eta|}}
p^{(\eta)}(u)f_u\nu.
\]
Via \eqref{eq: for nu}, this proves the second displayed formula in part (1).

The integral formula follows from the same identity, since for every
\(\eta\in\Omega_L\) and every \(\omega\in[\eta]\), we have
$\tau_L(\omega)=|\eta|,
\omega|_{\tau_L(\omega)}=\eta.$
Thus the integrand is constant on each stopped cylinder \([\eta]\), and so the
integral over \(\Omega\) is exactly the preceding finite sum. This completes the proof of Part (1).

We proceed now to Part (2).  Let
\[
Y_\ell(\omega):=\lambda_{\omega_\ell},
        \qquad
S_q(\omega):=\sum_{\ell=1}^q Y_\ell(\omega).
\]
Then \(Y_1,Y_2,\ldots\) are i.i.d. bounded random variables with respect to
\(Q\), and
$\E_Q Y_1=\chi.$
Moreover,
$\tau_L(\omega)=\min\{q:S_q(\omega)>L\}.$ 

Fix \(\varepsilon\in(0,1)\), and put
$c:=\frac{1-\varepsilon}{\chi}.$
Since \(\tau_L\) is integer-valued,
\[
\left\{\tau_L\leq \frac{(1-\varepsilon)L}{\chi}\right\}
=
\{\tau_L\leq \lfloor c L\rfloor\}.
\]
If \(\tau_L\leq \lfloor c L\rfloor\), then, since \(S_q\) is increasing in \(q\),
$S_{\lfloor cL\rfloor}>L.$
Hence
\[
Q\left(
\tau_L\leq \frac{(1-\varepsilon)L}{\chi}
\right)
\leq
Q(S_{\lfloor cL\rfloor}>L).
\]

We first give the sharper Chernoff bound.  For \(\theta\geq0\), define
\[
\Lambda(\theta)
:=
\log \E_Q(e^{\theta Y_1})
=
\log\left(
\sum_{j=1}^m \widetilde p_j e^{\theta\lambda_j}
\right).
\]
By Markov's inequality,
\[
Q(S_{\lfloor cL\rfloor}>L)
\leq
e^{-\theta L}
\E_Q(e^{\theta S_{\lfloor cL\rfloor}})
=
\exp\left(
-\theta L+\lfloor cL\rfloor\Lambda(\theta)
\right).
\]
Since \(\lfloor cL\rfloor\leq cL\), and since \(\Lambda(\theta)\geq0\) for
\(\theta\geq0\), this gives
\[
Q(S_{\lfloor cL\rfloor}>L)
\leq
\exp\left(
-L\{\theta-c\Lambda(\theta)\}
\right).
\]
Optimizing over \(\theta\geq0\), we obtain
\[
Q\left(
\tau_L\leq \frac{(1-\varepsilon)L}{\chi}
\right)
\leq
\exp\left(
-L\sup_{\theta\geq0}\{\theta-c\Lambda(\theta)\}
\right).
\]
Since \(c=(1-\varepsilon)/\chi\), the exponent can be rewritten as
\[
\sup_{\theta\geq0}\{\theta-c\Lambda(\theta)\}
=
\frac{1-\varepsilon}{\chi}
\sup_{\theta\geq0}
\left\{
\theta\frac{\chi}{1-\varepsilon}-\Lambda(\theta)
\right\}.
\]
Therefore
\[
Q\left(
\tau_L\leq \frac{(1-\varepsilon)L}{\chi}
\right)
\leq
\exp\left(
-\frac{1-\varepsilon}{\chi}\,
I\left(\frac{\chi}{1-\varepsilon}\right)L
\right),
\]
where
$I(x):=\sup_{\theta\geq0}\{\theta x-\Lambda(\theta)\}.$
This proves the refined estimate.

It remains to recover the explicit Hoeffding form.  If \(R=0\), then all
\(\lambda_j\) are equal to \(\chi\), so
$S_q=q\chi$
deterministically.  In this case
\[
\tau_L=\left\lfloor \frac{L}{\chi}\right\rfloor+1
>
\frac{L}{\chi}
>
\frac{(1-\varepsilon)L}{\chi},
\]
and the event in question is empty.

Assume therefore that \(R>0\).  Hoeffding's lemma gives, for every
\(\theta\geq0\),
\[
\Lambda(\theta)-\theta\chi
\leq
\frac{\theta^2 R^2}{8}.
\]
Consequently,
\[
\theta-c\Lambda(\theta)
=
\theta-c\theta\chi-c(\Lambda(\theta)-\theta\chi)
\geq
\varepsilon\theta
-
\frac{c\theta^2R^2}{8}.
\]
Optimizing the right hand side over \(\theta\geq0\), and recalling that
\(c=(1-\varepsilon)/\chi\), gives
\[
\sup_{\theta\geq0}\{\theta-c\Lambda(\theta)\}
\geq
\sup_{\theta\geq0}
\left\{
\varepsilon\theta
-
\frac{(1-\varepsilon)R^2}{8\chi}\theta^2
\right\}
=
\frac{2\varepsilon^2\chi}{(1-\varepsilon)R^2}.
\]
Therefore
\[
Q\left(
\tau_L\leq \frac{(1-\varepsilon)L}{\chi}
\right)
\leq
\exp\left(
-\frac{2\varepsilon^2\chi}{(1-\varepsilon)R^2}\,L
\right).
\]
This completes the proof.
\end{proof}

\subsection{Sobolev spaces on $\mathcal O_V$} \label{subsec: sobolev}
Recall that
\[
G:=\overline{\langle O_1,\ldots,O_n\rangle}\subseteq \mathrm{SO}(d),
\,
V\in G_{d,k},
\,
\mathcal O_V:=G\cdot V .
\]
We equip \(G_{d,k}\) with its standard \(\mathrm{SO}(d)\)-invariant
Riemannian metric and \(\mathcal O_V\) with the induced Riemannian metric.
Then \(\mathcal O_V\) is a compact homogeneous Riemannian manifold, and
the action of \(G\) on \(\mathcal O_V\) is by isometries.  Let \(m_V\)
denote the normalized Riemannian volume measure on \(\mathcal O_V\);
equivalently, \(m_V\) is the normalized \(G\)-invariant probability
measure on \(\mathcal O_V\).

For \(\rho\in\mathcal P(G)\) define the Markov operator
$P_{\rho,V}:L^2(\mathcal O_V,m_V)\to L^2(\mathcal O_V,m_V)$
by
\[
P_{\rho,V}F(W)
:=
\int_G F(g^{-1}W)\,d\rho(g).
\]
It preserves constant functions, and restricts to a contraction on
\begin{equation} \label{def: L two zero}
L^2_0(\mathcal O_V,m_V):=\left\{
f\in L^2(\mathcal O_V,m_V):
\int_{\mathcal O_V} f\,dm_V=0
\right\}.
\end{equation}
For $\rho,\rho'\in\mathcal P(G)$, let $\rho*\rho'$ denote the pushforward
of $\rho\times\rho'$ under $(g,h)\mapsto gh$. With our convention for
$P_{\rho,V}$, we have $P_{\rho,V}P_{\rho',V}=P_{\rho*\rho',V}.$ 
For $\rho\in\mathcal P(G)$, let $\rho\cdot\delta_V$ denote the law of
$g^{-1}V$ when $g$ has law $\rho$.

We next introduce the Sobolev spaces on \(\mathcal O_V\) that will be used
below; we follow the presentation as in \cite[Chapter 4]{Taylor1996partial}.  If \(F\in C^\infty(\mathcal O_V)\), its Riemannian gradient
\(\nabla_{\mathcal O_V}F\) is the tangent vector field characterized by
\[
\big\langle \nabla_{\mathcal O_V}F(W),X\big\rangle_W
=
dF_W(X),
\qquad
W\in\mathcal O_V,\quad X\in T_W\mathcal O_V .
\]
If \(X\) is a smooth tangent vector field on \(\mathcal O_V\), its
divergence \(\operatorname{div}_{\mathcal O_V}X\) is characterized by
\[
\int_{\mathcal O_V}
F\,\operatorname{div}_{\mathcal O_V}X\,dm_V
=
-
\int_{\mathcal O_V}
\big\langle \nabla_{\mathcal O_V}F,X\big\rangle\,dm_V
\qquad
\text{for every }F\in C^\infty(\mathcal O_V).
\]
We use the non-negative Laplace--Beltrami operator
\[
\Delta_{\mathcal O_V}F
:=
-
\operatorname{div}_{\mathcal O_V}
\bigl(\nabla_{\mathcal O_V}F\bigr),
\qquad
F\in C^\infty(\mathcal O_V).
\]
Thus, for \(F,H\in C^\infty(\mathcal O_V)\),
$\int_{\mathcal O_V}
(\Delta_{\mathcal O_V}F)H\,dm_V
=
\int_{\mathcal O_V}
\big\langle
\nabla_{\mathcal O_V}F,
\nabla_{\mathcal O_V}H
\big\rangle\,dm_V.$
In particular, \(\Delta_{\mathcal O_V}\) is non-negative and annihilates
constant functions.

For \(\sigma\geq0\), define \[ H^\sigma(\mathcal O_V) := \operatorname{Dom} \left((I+\Delta_{\mathcal O_V})^{\sigma/2}\right), \] with norm $\|F\|_{H^\sigma(\mathcal O_V)} := \left\| (I+\Delta_{\mathcal O_V})^{\sigma/2}F \right\|_{L^2(\mathcal O_V,m_V)}.$ Here the fractional power is defined by the spectral theorem for the non-negative self-adjoint operator \(\Delta_{\mathcal O_V}\). If \(\mathcal O_V\) has positive dimension, this agrees with the usual spectral definition using the discrete eigenvalues of \(\Delta_{\mathcal O_V}\); if \(\mathcal O_V\) is finite, all these Sobolev spaces coincide with \(L^2(\mathcal O_V,m_V)\).

We will use the Sobolev embedding theorem \cite[Chapter 4, Section 3]{Taylor1996partial}, in the following form: if
$\sigma>\frac12\dim\mathcal O_V,$
then every \(F\in H^\sigma(\mathcal O_V)\) has a continuous representative,
and there exists \(C_{V,\sigma}<\infty\) such that
\begin{equation}
\label{eq:sobolev-embedding-orbit}
\|F\|_{C^0(\mathcal O_V)}
\leq
C_{V,\sigma}
\|F\|_{H^\sigma(\mathcal O_V)}.
\end{equation}
In particular,
$|F(W)|
\leq
C_{V,\sigma}
\|F\|_{H^\sigma(\mathcal O_V)}$ for every $W\in \mathcal{O}_V$.

\subsection{Operator norms and exponential mixing on $H^\sigma (\mathcal O_V)$}
\label{subsec:stopped-cylinder-mixing}

For \(\sigma\geq0\), put
$H^\sigma_0(\mathcal O_V)
:=
H^\sigma(\mathcal O_V)\cap L^2_0(\mathcal O_V,m_V).$
\begin{lemma}
\label{lem:ambient-gap-controls-orbit-sobolev}
Let
\(\rho\in\mathcal P(G)\), and let \(P_\rho^G\) denote the  Markov operator on
\(L^2(G,m_G)\), where \(m_G\) is normalized Haar measure:
$P_\rho^G\Psi(h)
:=
\int_G \Psi(g^{-1}h)\,d\rho(g),
\,
\Psi\in L^2(G,m_G).$
Then, for every $\sigma\geq 0$,
\[
\left\|
P_{\rho,V}
\right\|_{H^\sigma_0(\mathcal O_V)\to H^\sigma_0(\mathcal O_V)}
=
\left\|
P_{\rho,V}
\right\|_{L^2_0(\mathcal O_V)\to L^2_0(\mathcal O_V)}
\leq
\left\|
P_\rho^G
\right\|_{L^2_0(G)\to L^2_0(G)}.
\]
\end{lemma}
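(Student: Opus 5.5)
The lemma has two parts: an equality of operator norms on $H^\sigma_0$ and $L^2_0$, and a comparison with the ambient operator on $G$.

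\emph{The equality.} The plan is to show that $P_{\rho,V}$ commutes with $\Delta_{\mathcal O_V}$.

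Each $g\in G$ acts on $\mathcal O_V$ by a Riemannian isometry. Hence the pullback $T_gF(W):=F(g^{-1}W)$ commutes with $\nabla$, with $\operatorname{div}$, and therefore with $\Delta_{\mathcal O_V}$ on $C^\infty(\mathcal O_V)$.

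Averaging in $g$ against $\rho$ gives $P_{\rho,V}\Delta_{\mathcal O_V}=\Delta_{\mathcal O_V}P_{\rho,V}$ on smooth functions. Since $P_{\rho,V}$ is a bounded contraction on $L^2$, the commutation passes to the self-adjoint closure of $\Delta_{\mathcal O_V}$. By the spectral theorem, $P_{\rho,V}$ then commutes with $(I+\Delta_{\mathcal O_V})^{\sigma/2}$.

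$P_{\rho,V}$ also preserves constants and preserves integrals, since $m_V$ is $G$-invariant. So it preserves $L^2_0$, and $(I+\Delta)^{\sigma/2}$ maps $H^\sigma_0$ isometrically onto $L^2_0$. Therefore
\[
\|P_{\rho,V}F\|_{H^\sigma}=\|P_{\rho,V}(I+\Delta)^{\sigma/2}F\|_{L^2}.
\]
Taking suprema gives equality of the two norms.

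A cleaner way to organize this is to decompose $L^2(\mathcal O_V)$ into the finite-dimensional eigenspaces $E_\lambda$ of $\Delta_{\mathcal O_V}$. Each $E_\lambda$ is $G$-invariant, so $P_{\rho,V}$ preserves it. On $E_\lambda$ the $H^\sigma$ norm is a fixed scalar multiple $(1+\lambda)^{\sigma/2}$ of the $L^2$ norm. The operator norm on $H^\sigma_0$ is then the supremum over $\lambda$ of the norms of $P_{\rho,V}|_{E_\lambda\cap L^2_0}$, which is exactly the $L^2_0$ norm. The finite-orbit case is trivial, since there all the Sobolev spaces coincide with $L^2$.

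\emph{The inequality.} Let $H=\operatorname{Stab}_G(V)$, a closed subgroup, and identify $\mathcal O_V\cong G/H$ via $gH\mapsto gV$. Under this map $m_V$ is the pushforward of Haar measure $m_G$.

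Define the lift $\Psi_F(h):=F(hV)$. This is an isometric embedding $L^2(\mathcal O_V,m_V)\hookrightarrow L^2(G,m_G)$, and it sends $L^2_0$ into $L^2_0$ because $\int\Psi_F\,dm_G=\int F\,dm_V$.

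The lift intertwines the two operators:
\[
P^G_\rho\Psi_F(h)=\int F(g^{-1}hV)\,d\rho(g)=(P_{\rho,V}F)(hV)=\Psi_{P_{\rho,V}F}(h).
\]
Hence
\[
\|P_{\rho,V}F\|_{L^2}=\|P^G_\rho\Psi_F\|_{L^2}\le\|P^G_\rho\|_{L^2_0(G)}\|\Psi_F\|_{L^2}=\|P^G_\rho\|_{L^2_0(G)}\|F\|_{L^2}
\]
for $F\in L^2_0$, which is the claimed bound.

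\emph{Main obstacle.} The only delicate point is the functional-analytic justification of the commutation on the domain of the fractional power. The eigenspace decomposition handles it cleanly, because $G$-invariance of each $E_\lambda$ follows directly from the fact that the $T_g$ commute with $\Delta_{\mathcal O_V}$.
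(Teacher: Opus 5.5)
Your proof is correct and follows the same route as the paper. For the inequality you lift $F\mapsto F(hV)$ to an isometric, intertwining embedding $L^2_0(\mathcal O_V)\hookrightarrow L^2_0(G)$; for the equality you show $P_{\rho,V}$ commutes with $(I+\Delta_{\mathcal O_V})^{\sigma/2}$, using the $G$-equivariance of the Laplace--Beltrami operator and spectral calculus. Your eigenspace-decomposition variant is a concrete form of the same spectral argument, and your closure step supplies a domain detail that the paper's proof passes over.
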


\begin{proof}
Let $K_V:=\operatorname{Stab}_G(V).$ 
The map
$\iota_V:L^2(\mathcal O_V,m_V)\longrightarrow L^2(G,m_G),
\,
(\iota_VF)(h):=F(hV),$
is an isometric embedding onto the closed subspace of
right-\(K_V\)-invariant functions.  Moreover,
$\iota_V\bigl(P_{\rho,V}F\bigr)
=
P_\rho^G\bigl(\iota_VF\bigr).$ 
Indeed, for \(h\in G\),
\[
\begin{aligned}
\iota_V\bigl(P_{\rho,V}F\bigr)(h)
&=
P_{\rho,V}F(hV)
\\
&=
\int_G F(g^{-1}hV)\,d\rho(g)
\\
&=
\int_G (\iota_VF)(g^{-1}h)\,d\rho(g)
=
P_\rho^G(\iota_VF)(h).
\end{aligned}
\]
since \(m_V\) is the pushforward of \(m_G\) under the map
\(h\mapsto hV\), we have
$\int_G (\iota_VF)(h)\,dm_G(h)
=
\int_{\mathcal O_V}F(W)\,dm_V(W)$;
thus, \(\iota_V\) maps \(L^2_0(\mathcal O_V,m_V)\) isometrically into
\(L^2_0(G,m_G)\).
It follows that
\[
\left\|
P_{\rho,V}
\right\|_{L^2_0(\mathcal O_V)\to L^2_0(\mathcal O_V)}
\leq
\left\|
P_\rho^G
\right\|_{L^2_0(G)\to L^2_0(G)}.
\]

It remains to establish the Sobolev estimate. For \(g\in G\), let \(U_g\) denote the unitary operator on
\(L^2(\mathcal O_V,m_V)\) given by
\[
U_gF(W)=F(g^{-1}W).
\]
Since \(g\) acts isometrically on \(\mathcal O_V\), \(U_g\) also acts
unitarily on \(H^\sigma(\mathcal O_V)\) for every \(\sigma\geq0\); and,
 \(U_g\)
commutes with the Laplace--Beltrami operator:
$U_g\Delta_{\mathcal O_V}
=
\Delta_{\mathcal O_V}U_g.$ 
Consequently, by spectral calculus,
$U_g(I+\Delta_{\mathcal O_V})^{\sigma/2}
=
(I+\Delta_{\mathcal O_V})^{\sigma/2}U_g,$
and averaging over \(g\) gives
\begin{equation} \label{eq: above}
P_{\rho,V}(I+\Delta_{\mathcal O_V})^{\sigma/2}
=
(I+\Delta_{\mathcal O_V})^{\sigma/2}P_{\rho,V}.
\end{equation}

Set
\(A_\sigma:=(I+\Delta_{\mathcal O_V})^{\sigma/2}.\)
Since \(A_\sigma1=1\), the operator \(A_\sigma\) restricts to an
isometric isomorphism
\(A_\sigma:
H^\sigma_0(\mathcal O_V)
\longrightarrow
L^2_0(\mathcal O_V,m_V).\)
Indeed, the calculation above shows that \(A_\sigma\) preserves the
zero-mean subspace, and its inverse
\((I+\Delta_{\mathcal O_V})^{-\sigma/2}\) does so as well.

By \eqref{eq: above},
\(A_\sigma P_{\rho,V}A_\sigma^{-1} =
P_{\rho,V}
\quad\text{on }L^2_0(\mathcal O_V,m_V).\)
Therefore,
\[
\left|
P_{\rho,V}
\right|_{H^\sigma_0(\mathcal O_V)\to H^\sigma_0(\mathcal O_V)}
=
\left|
P_{\rho,V}
\right|_{L^2_0(\mathcal O_V,m_V)\to L^2_0(\mathcal O_V,m_V)}.
\]
Together with the ambient-group estimate proved above, this completes the
proof.
\end{proof}

For each model symbol \(j\in\{1,\ldots,m\}\), define
\[
\rho_j
:=
\sum_{i\in\mathcal I_j}
\frac{p_i}{\widetilde p_j}\delta_{O_i}
\in\mathcal P(G).
\]
Equivalently, if \(\eta=(\eta_1,\ldots,\eta_\ell)\), then
$\rho_{\eta_1}*\cdots *\rho_{\eta_\ell}
=
\sum_{u\in X^{(\eta)}_\ell}
p^{(\eta)}(u)\,\delta_{O_u},$
where
$f_u(x)=r_uO_u x+t_u,$ and
$O_u:=O_{u_1}\cdots O_{u_\ell}.$

Let \(L>0\) and \(\eta\in\Omega_L\), recall \eqref{eq: omega n}.  Since all words
\(u\in X^{(\eta)}_{|\eta|}\) have the same contraction modulus, there is
\(\varepsilon_{\eta,L}\geq0\) such that
\begin{equation}
\label{eq: eps eta L relative}
|r_u|
=
\prod_{\ell=1}^{|\eta|}a^{(\eta_\ell)}
=
e^{-L-\varepsilon_{\eta,L}},
        \qquad u\in X^{(\eta)}_{|\eta|}.
\end{equation}
We write
$r_\eta:=e^{-L-\varepsilon_{\eta,L}}$
and
$\mu_\eta
:=
\sum_{u\in X^{(\eta)}_{|\eta|}}
p^{(\eta)}(u)f_u\nu.$

\begin{claim}
\label{claim:stopped-cylinder-mixing}
Let
\(\sigma>\frac{1}{2}\dim \mathcal O_V\).
There exists a constant \(C_{V,\sigma}<\infty\) such that for every
\(L>0\), every \(\eta\in\Omega_L\), every \(W\in\mathcal O_V\), and every
\(F\in H^\sigma(\mathcal O_V)\),
\[
\begin{aligned}
&
\left|
\sum_{u\in X^{(\eta)}_{|\eta|}}
p^{(\eta)}(u)\,
F(O_u^{-1}W)
-
\int_{\mathcal O_V}F\,dm_V
\right|
\\
&\qquad\leq
C_{V,\sigma}
\prod_{\ell=1}^{|\eta|}
\left\|
P_{\rho_{\eta_\ell},V}
\right\|_{L^2 _0(\mathcal O_V)\to L^2 _0(\mathcal O_V)}
\cdot
\left\|
F-\int_{\mathcal O_V}F\,dm_V
\right\|_{H^\sigma(\mathcal O_V)}.
\end{aligned}
\]
\end{claim}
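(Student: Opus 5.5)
The plan is to rewrite the left-hand side as a composition of the Markov operators $P_{\rho_{\eta_\ell},V}$ applied to $F$ and evaluated at $W$. Then the zero-mean part is controlled by Lemma~\ref{lem:ambient-gap-controls-orbit-sobolev}, and the point evaluation by Sobolev embedding \eqref{eq:sobolev-embedding-orbit}.

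\emph{Operator identity.} Since $\rho_{\eta_1}*\cdots*\rho_{\eta_\ell}=\sum_{u\in X^{(\eta)}_\ell}p^{(\eta)}(u)\delta_{O_u}$, we have
\[
\sum_{u\in X^{(\eta)}_{|\eta|}}p^{(\eta)}(u)F(O_u^{-1}W)=P_{\rho_{\eta_1}*\cdots*\rho_{\eta_{|\eta|}},V}F(W)=P_{\rho_{\eta_1},V}\cdots P_{\rho_{\eta_{|\eta|}},V}F(W).
\]
This uses the convolution rule $P_{\rho,V}P_{\rho',V}=P_{\rho*\rho',V}$ recorded earlier. It should be checked against the convention $P_{\rho,V}F(W)=\int F(g^{-1}W)\,d\rho(g)$. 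Indeed, $P_{\rho_1,V}P_{\rho_2,V}F(W)=\iint F(h^{-1}g^{-1}W)\,d\rho_1(g)\,d\rho_2(h)$, and $h^{-1}g^{-1}=(gh)^{-1}$, which matches $O_u^{-1}=(O_{u_1}\cdots O_{u_\ell})^{-1}$.

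\emph{Splitting off the mean.} Write $F=c+F^0$ with $c=\int F\,dm_V$ and $F^0\in H^\sigma_0(\mathcal O_V)$. Each $P_{\rho,V}$ fixes constants, because the $\rho$ are probability measures. Each $P_{\rho,V}$ also preserves $H^\sigma_0$, by $G$-invariance of $m_V$ and by the commutation with $(I+\Delta_{\mathcal O_V})^{\sigma/2}$ shown in the proof of Lemma~\ref{lem:ambient-gap-controls-orbit-sobolev}. Hence the quantity to bound is $|P_{\rho_{\eta_1},V}\cdots P_{\rho_{\eta_{|\eta|}},V}F^0(W)|$.

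\emph{Sobolev embedding and operator norms.} Apply \eqref{eq:sobolev-embedding-orbit}, which is valid since $\sigma>\tfrac12\dim\mathcal O_V$, to the function $P_{\rho_{\eta_1},V}\cdots P_{\rho_{\eta_{|\eta|}},V}F^0\in H^\sigma_0$. This bounds its value at $W$ by $C_{V,\sigma}$ times its $H^\sigma$ norm. Submultiplicativity of operator norms on $H^\sigma_0$ then gives the bound
\[
\prod_\ell\|P_{\rho_{\eta_\ell},V}\|_{H^\sigma_0\to H^\sigma_0}\,\|F^0\|_{H^\sigma}.
\]
Lemma~\ref{lem:ambient-gap-controls-orbit-sobolev}, applied to each $\rho=\rho_{\eta_\ell}$, shows that the $H^\sigma_0$ norms equal the $L^2_0$ norms. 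Finally, the constant $C_{V,\sigma}$ comes only from the embedding, so it is independent of $L$, $\eta$, $W$ and $F$. If $\mathcal O_V$ is finite, the embedding is trivial with $C_{V,\sigma}=1$, since $m_V$ is then uniform on finitely many points.

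I expect no serious obstacle. The only delicate point is the bookkeeping of the left-action convention, so that the product of operators corresponds exactly to the word $O_u=O_{u_1}\cdots O_{u_\ell}$ in the right order. Because only norms of the individual factors enter, the order of the factors does not matter for the final estimate.
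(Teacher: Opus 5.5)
Your proposal is correct and follows the paper's proof essentially step for step. It uses the operator identity via $\rho_{\eta_1}*\cdots*\rho_{\eta_{|\eta|}}$, the reduction to the mean-zero part, Sobolev embedding, submultiplicativity on $H^\sigma_0(\mathcal O_V)$, and Lemma~\ref{lem:ambient-gap-controls-orbit-sobolev} to pass from $H^\sigma_0$ to $L^2_0$ norms. One small slip is in your side remark on finite orbits: if $\mathcal O_V$ has $N$ points, the embedding constant is $\sqrt{N}$, not $1$ (test it on the indicator of a single point). This is harmless, since the claim only asserts the existence of some $C_{V,\sigma}$.
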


\begin{proof}
Write
\[
\overline F:=\int_{\mathcal O_V}F\,dm_V,
\qquad
F_0:=F-\overline F.
\]
Then \(F_0\in H^\sigma_0(\mathcal O_V)\). Since each \(P_{\rho,V}\) preserves
constants, it is enough to estimate the same expression with \(F\) replaced by
\(F_0\).

By the definition of the measures \(\rho_j\), and by the identity
\[
\rho_{\eta_1}*\cdots *\rho_{\eta_{|\eta|}}
=
\sum_{u\in X^{(\eta)}_{|\eta|}}
p^{(\eta)}(u)\,\delta_{O_u},
\]
we have
\[
\sum_{u\in X^{(\eta)}_{|\eta|}}
p^{(\eta)}(u)\,
F_0(O_u^{-1}W)
=
P_{\rho_{\eta_1},V}\cdots P_{\rho_{\eta_{|\eta|}},V}F_0(W).
\]
Since \(\sigma>\frac12\dim\mathcal O_V\), Sobolev embedding gives
\[
\left|
P_{\rho_{\eta_1},V}\cdots P_{\rho_{\eta_{|\eta|}},V}F_0(W)
\right|
\leq
C_{V,\sigma}
\left\|
P_{\rho_{\eta_1},V}\cdots P_{\rho_{\eta_{|\eta|}},V}F_0
\right\|_{H^\sigma(\mathcal O_V)}.
\]
The operators \(P_{\rho,V}\) preserve \(H^\sigma_0(\mathcal O_V)\). Hence
\[
\begin{aligned}
&
\left\|
P_{\rho_{\eta_1},V}\cdots P_{\rho_{\eta_{|\eta|}},V}F_0
\right\|_{H^\sigma(\mathcal O_V)}
\\
&\qquad\leq
\prod_{\ell=1}^{|\eta|}
\left\|
P_{\rho_{\eta_\ell},V}
\right\|_{H^\sigma_0(\mathcal O_V)\to H^\sigma_0(\mathcal O_V)}
\cdot
\|F_0\|_{H^\sigma(\mathcal O_V)}.
\end{aligned}
\]
Combining the last two estimates with Lemma \ref{lem:ambient-gap-controls-orbit-sobolev} proves the claim.
\end{proof}
\subsection{Besov spaces and Littlewood-Paley type averages}
\label{subsec:besov-orbit-estimates}
We first recall the Littlewood-Paley conventions underlying the definition of
\(\dim_q^V\nu\), and record the Besov and orbit-Sobolev estimates that will
be used in the proof of Theorem~\ref{thm:main}.
.
Let \(U\in G_{d,k}\).  Fix a smooth radial Schwartz function
\(\psi\) on \(\mathbb R^k\) such that
$\supp \widehat\psi
\subset
\{\xi\in\mathbb R^k:1/2\leq |\xi|\leq2\}.$
For \(m\geq1\), let \(\psi_m^U\) denote the corresponding Littlewood-Paley
kernel on \(U\), defined by
$\widehat{\psi_m^U}(\xi)=\widehat\psi(2^{-m}\xi),
\, \xi\in U.$
We also fix a smooth low-frequency cutoff \(\varphi_0\) on \(\mathbb R^k\), and
write \(\varphi_0^U\) for the corresponding kernel on \(U\). We choose \(\varphi_0\) and \(\psi\) so that
\begin{equation} \label{eq: LP partition}
\widehat{\varphi_0}(\xi)+\sum_{m\geq1}\widehat\psi(2^{-m}|\xi|)=1,
\qquad \xi\in\mathbb R^k.
\end{equation}

For \(0<\beta<\infty\) and \(1<q\leq2\), the Besov space
\(B^\beta_{q,q}(U)\) consists of all tempered distributions \(f\) on \(U\) such
that
\begin{equation} \label{eq: besov}
\|f\|_{B^\beta_{q,q}(U)}^q
:=
\|f*\varphi_0^U\|_{L^q(U)}^q
+
\sum_{m\geq1}
2^{mq\beta}
\|f*\psi_m^U\|_{L^q(U)}^q
<\infty .
\end{equation}

The following Lemma is standard:
\begin{lemma} \label{lem:besov-reconstruction-measure} Let \(\mu\in\mathcal P(U)\), let \(1<q<\infty\), and let \(\beta>0\). Suppose that \[ \sum_{m\geq1} 2^{mq\beta} \|\mu*\psi_m^U\|_{L^q(U)}^q <\infty. \] Then \(\mu\ll\operatorname{Leb}_U\), and its density belongs to \(B^\beta_{q,q}(U)\). 
\end{lemma}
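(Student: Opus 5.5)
The plan is to build the density explicitly as the Littlewood--Paley series and then identify it with $\mu$ through Fourier transforms. Set $f_0:=\mu*\varphi_0^U$ and $f_m:=\mu*\psi_m^U$ for $m\geq1$. Each $f_m$ is a Schwartz-class function, since $\mu$ is a finite measure and the kernels are Schwartz. We also have $\|f_0\|_{L^q}\leq \|\varphi_0^U\|_{L^q}<\infty$, because $\mu$ is a probability measure.

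The first step is to show that $f:=\sum_{m\geq0}f_m$ converges in $L^q(U)$. By H\"older's inequality with exponents $q,q'$,
\[
\sum_{m\geq1}\|f_m\|_{L^q}\leq\Big(\sum_{m\geq1}2^{mq\beta}\|f_m\|_{L^q}^q\Big)^{1/q}\Big(\sum_{m\geq1}2^{-mq'\beta}\Big)^{1/q'}<\infty,
\]
and the second factor is finite because $\beta>0$. So the series converges absolutely in $L^q(U)$, and $f\in L^q(U)$.

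The second step is to show $f=\mu$ as tempered distributions. Take a Schwartz test function $\phi$. Then
\[
\langle f,\phi\rangle=\lim_N\sum_{m\leq N}\langle \mu, \widetilde{\chi_m}*\phi\rangle,
\]
where $\widetilde{\chi_m}$ denotes the reflected kernel. On the Fourier side, \eqref{eq: LP partition} gives that the partial sums $\widehat{\varphi_0}+\sum_{m\leq N}\widehat\psi(2^{-m}\cdot)$ are bounded by a constant independent of $N$ and converge pointwise to $1$. Hence $\sum_{m\leq N}\widetilde{\chi_m}*\phi\to\phi$ uniformly, since its Fourier transform converges in $L^1$ by dominated convergence. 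Because $\mu$ is a finite measure, it follows that $\langle f,\phi\rangle=\langle\mu,\phi\rangle$. Thus $\mu$ coincides with the $L^q$ function $f$. In particular $\mu\ll\operatorname{Leb}_U$ with density $f$.

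The last step is the Besov norm of $f=\mu$. Here $f*\psi_m^U=\mu*\psi_m^U$, so the high-frequency part of \eqref{eq: besov} is exactly the hypothesis. The remaining low-frequency term is $\|f*\varphi_0^U\|_{L^q}\leq\|\varphi_0^U\|_{L^q}<\infty$, which holds because $\mu$ is a probability measure. Hence $f\in B^\beta_{q,q}(U)$.

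The only mildly delicate point is the distributional identification in the second step, specifically the uniform boundedness of the Fourier partial sums. This is immediate if one takes the standard construction $\widehat\psi(\xi)=\widehat{\varphi_0}(\xi/2)-\widehat{\varphi_0}(\xi)$. In that case the sums telescope to $\widehat{\varphi_0}(2^{-N}\xi)$, which is bounded by $\|\widehat{\varphi_0}\|_\infty$ and tends to $1$ pointwise.
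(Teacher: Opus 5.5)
Your argument is correct and is essentially the paper's proof. H\"older's inequality gives absolute convergence of the Littlewood--Paley partial sums in $L^q(U)$, \eqref{eq: LP partition} identifies the limit with $\mu$ as distributions, and the Besov norm is the hypothesis plus a Young's-inequality bound on the low-frequency term. Two small remarks: the uniform bound on the Fourier partial sums needs no special construction, because the annular support of $\widehat\psi$ means at most three dilates $\widehat\psi(2^{-m}\cdot)$ are nonzero at any $\xi$ (and with the paper's indexing the telescoping choice would be $\widehat\psi(\xi)=\widehat{\varphi_0}(\xi)-\widehat{\varphi_0}(2\xi)$, not $\widehat{\varphi_0}(\xi/2)-\widehat{\varphi_0}(\xi)$); and $\mu*\psi_m^U$ need not be Schwartz when $\mu$ is not compactly supported, though bounded, smooth and in $L^q$ is all you actually use.
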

 \begin{proof} First, Young's inequality gives $\|\mu*\varphi_0^U\|_{L^q(U)} \leq \|\varphi_0^U\|_{L^q(U)}.$  For \(N\geq1\), set  $f_N := \mu*\varphi_0^U+\sum_{m=1}^N\mu*\psi_m^U.$
 By \eqref{eq: LP partition}, \(f_N\to\mu\) in the sense of tempered distributions on \(U\). On the other hand, since \(\beta>0\), Hölder's inequality gives 
 \[ \sum_{m\geq1}\|\mu*\psi_m^U\|_{L^q(U)} \leq \left(\sum_{m\geq1}2^{-mq'\beta}\right)^{1/q'} \left( \sum_{m\geq1} 2^{mq\beta} \|\mu*\psi_m^U\|_{L^q(U)}^q \right)^{1/q} <\infty, \]
  where \(q'=q/(q-1)\). Hence \(f_N\) converges in \(L^q(U)\) to some \(f\in L^q(U)\). Since also \(f_N\to\mu\) as distributions, we have  $\mu=f\,\operatorname{Leb}_U.$
  The assumed summability, together with the low-frequency estimate above, gives \(f\in B^\beta_{q,q}(U)\).
  \end{proof}

For
\(\mu\in\mathcal P(\mathbb R^d)\) and \(m\geq1\), define
\[
F_{\mu,m}(W)
:=
\left\|
(\pi_W\mu)*\psi_m^W
\right\|_{L^q(W)}^q,
\qquad W\in\mathcal O_V.
\]

\begin{proposition}
\label{prop:besov-orbit-sobolev-bound}
Let \(R_0>0\), and suppose that
\(\mu\in\mathcal P(\mathbb R^d)\) is supported in \(B(0,R_0)\).  Suppose either
\(1<q<2\) and \(0\leq\sigma<q\), or  \(q=2\) and \(\sigma\geq0\). Then
there exists a constant
\(C_{\psi,q,\sigma,V,R_0}<\infty\), independent of \(\mu\) and \(m\), such that
\[
\|F_{\mu,m}\|_{H^\sigma(\mathcal O_V)}
\leq
C_{\psi,q,\sigma,V,R_0}\,
2^{m(\sigma+k(q-1))}.
\]
\end{proposition}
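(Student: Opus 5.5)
We estimate \(F_{\mu,m}\) in local coordinates on the orbit. Since \(\mathcal O_V\) is compact and homogeneous, we cover it by finitely many charts \(y\mapsto W_y\), with \(y\) ranging over a ball \(B\subset\mathbb R^{D}\), \(D=\dim\mathcal O_V\). On each chart we fix a smooth family of isometries \(T_y:\mathbb R^k\to W_y\); for instance \(T_y=g(y)|_{\mathbb R^k}\), where \(g(y)\in SO(d)\) is a smooth local section. A partition of unity then reduces \(\|F_{\mu,m}\|_{H^\sigma(\mathcal O_V)}\) to the Euclidean Sobolev norms \(\|\chi F_{\mu,m}(W_y)\|_{H^\sigma(\mathbb R^D)}\). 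By radiality of \(\psi\),
\[
G_y(x)=\int \psi_m\bigl(x-T_y^{*}z\bigr)\,d\mu(z),\qquad \psi_m(x)=2^{mk}\psi(2^mx),
\]
because \(T_y^{*}\pi_{W_y}z\) is the \(\mathbb R^k\)-coordinate of \(\pi_{W_y}z\). Also \(F_{\mu,m}(W_y)=\int_{\mathbb R^k}|G_y(x)|^q\,dx\).

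\textbf{Basic bounds on \(G_y\).} Since \(|z|\le R_0\) and \(y\mapsto T_y^*\) is smooth, each \(y\)-derivative of \(\psi_m(x-T_y^*z)\) costs a factor \(2^m\) via the chain rule and produces a kernel with the same profile as \(\psi_m\), up to polynomial factors in \(2^m(x-T_y^*z)\). Young's inequality (with \(\mu\) a probability measure) then gives, uniformly in \(y\) and for every multi-index \(\alpha\),
\[
\|\partial_y^\alpha G_y\|_{L^p(\mathbb R^k)}\lesssim 2^{m|\alpha|}\,2^{mk(1-1/p)},\qquad 1\le p\le\infty .
\]
For \(q=2\) and integer \(\sigma\) this already suffices. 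Indeed, \(F=\int G\overline G\), so \(\partial^\alpha F\) is a sum of terms \(\int\partial^\beta G\,\overline{\partial^{\alpha-\beta}G}\). By Cauchy–Schwarz each is bounded by \(2^{m|\alpha|}2^{mk}\), which matches \(2^{m(\sigma+k(q-1))}\) with \(q=2\). Non-integer \(\sigma\) follows by interpolation between consecutive integers.

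\textbf{The case \(1<q<2\) and \(\sigma<q\).} First, \(\|F\|_{L^\infty}\lesssim\|G\|_q^q\lesssim 2^{mk(q-1)}\). Next,
\[
\partial_{y_j}F=q\,\mathrm{Re}\int |G|^{q-2}\overline G\,\partial_jG\,dx .
\]
By Hölder with exponents \(q/(q-1)\) and \(q\),
\[
|\partial_jF|\le q\,\|G\|_q^{q-1}\|\partial_jG\|_q\lesssim 2^{mk(q-1)^2/q}\,2^{m}\,2^{mk(q-1)/q}=2^{m(1+k(q-1))}.
\]
This handles \(\sigma\le1\) by interpolation between \(L^2\) and \(H^1\). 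For \(1<\sigma<q\) we write \(\sigma=1+\theta\) with \(\theta<q-1\), and bound the \(C^{\theta'}\) norm of \(\partial_jF\) for some \(\theta<\theta'<q-1\). Since \(C^{1+\theta'}\hookrightarrow H^{1+\theta}\) on compact charts, this controls \(\|F\|_{H^\sigma}\).

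To bound the Hölder norm we estimate \(\partial_jF(y)-\partial_jF(y')\). We split it into
\[
\int |G_y|^{q-2}\overline{G_y}\,\bigl(\partial_jG_y-\partial_jG_{y'}\bigr)\,dx
\]
plus
\[
\int\Bigl(|G_y|^{q-2}\overline{G_y}-|G_{y'}|^{q-2}\overline{G_{y'}}\Bigr)\,\partial_jG_{y'}\,dx .
\]
The first term is at most \(2^{mk(q-1)^2/q}\,2^{2m}|y-y'|\,2^{mk(q-1)/q}\). For the second we use \(\bigl||a|^{q-2}a-|b|^{q-2}b\bigr|\lesssim|a-b|^{q-1}\) and Hölder:
\[
\int|G_y-G_{y'}|^{q-1}|\partial_jG_{y'}|\,dx\le\|G_y-G_{y'}\|_q^{q-1}\|\partial_jG_{y'}\|_q\lesssim\bigl(2^m|y-y'|\bigr)^{q-1}2^{m}\,2^{mk(q-1)},
\]
where the \(2^{mk}\) powers combine as before. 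Hence, for \(|y-y'|\le2^{-m}\), we get \(|\partial_jF(y)-\partial_jF(y')|\lesssim 2^{m(1+k(q-1))}(2^m|y-y'|)^{q-1}\). For \(|y-y'|>2^{-m}\), the trivial bound \(2\sup|\partial_jF|\) gives the same estimate. Consequently \([\partial_jF]_{C^{\theta'}}\lesssim 2^{m(1+\theta'+k(q-1))}\) for every \(\theta'\le q-1\).

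Choosing \(\theta<\theta'<q-1\) would lose a factor \(2^{m(\theta'-\theta)}\). To avoid this we work directly with the Besov/Slobodeckij seminorm: we integrate \(|\partial F(y)-\partial F(y')|^2/|y-y'|^{D+2\theta}\), splitting at \(|y-y'|=2^{-m}\). Both pieces converge precisely because \(\theta<q-1\), and they yield \(2^{m(1+\theta+k(q-1))}\).

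\textbf{Main obstacle.} The main difficulty is this last step: obtaining the exact exponent \(\sigma+k(q-1)\) for \(1<\sigma<q\) from only the \((q-1)\)-Hölder continuity of \(z\mapsto|z|^{q-2}z\). The difference-quotient splitting at scale \(2^{-m}\) is the natural route, and its convergence requires \(\theta<q-1\), i.e. \(\sigma<q\). The remaining issues — the chart and partition-of-unity reduction, and verifying that \(\partial_y\) of the kernel has \(L^p\) norms of the stated size — are routine.
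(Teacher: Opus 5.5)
Your proposal is correct and follows essentially the same route as the paper. Both arguments use the chart-wise bounds $\|\partial_y^\alpha G_y\|_{L^q}\lesssim 2^{m(|\alpha|+k/q')}$, the quadratic Cauchy--Schwarz argument with interpolation for $q=2$, and, for $1<\sigma<q$, a $(q-1)$-H\"older estimate on $\partial_y F_{\mu,m}$ followed by a Gagliardo--Slobodeckij computation split at $|y-y'|=2^{-m}$. That split point is exactly the paper's $\rho_0=(M_i/K_i)^{1/\gamma}$; the only cosmetic difference is that you get the H\"older bound by a case split at scale $2^{-m}$ instead of via $\min\{A,B\}\le A^{2-q}B^{q-1}$.
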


\begin{proof}
We work in local coordinates on \(\mathcal O_V\), and then sum over a finite
atlas.  Let \(Y\subset\mathbb R^r\), \(r=\dim\mathcal O_V\), be a coordinate
chart, and write \(W_y\) for the corresponding element of \(\mathcal O_V\).
After shrinking the chart, we may assume that \(Y\) is a bounded Lipschitz convex
domain whose closure is compactly contained in the coordinate patch, and
choose smoothly in \(y\) an orthonormal identification
$A_y:\mathbb R^k\to W_y.$
We write \(A_y^*:\mathbb R^d\to\mathbb R^k\) for its adjoint. Set
\[
G_y(x)
:=
\bigl((\pi_{W_y}\mu)*\psi_m^{W_y}\bigr)(A_yx),
\qquad x\in\mathbb R^k.
\]
Then
$F_{\mu,m}(W_y)=\|G_y\|_{L^q(\mathbb R^k)}^q.$

We first derive estimates for \(G_y\) and its derivatives that are
uniform in \(y\) and explicit in the Littlewood-Paley scale \(m\). Since \(A_y\) is an
isometry onto \(W_y\), we have
$\pi_{W_y}z=A_yA_y^*z$.
And, under the identification \(A_y:\mathbb R^k\to W_y\),
$\psi_m^{W_y}(A_yu)=2^{mk}\psi(2^mu).$
Consequently,
\begin{equation}
\label{eq:local-expression-Gym}
G_y(x)
=
\int_{\mathbb R^d}
2^{mk}\psi\bigl(2^m(x-A_y^*z)\bigr)\,d\mu(z).
\end{equation}
All derivatives of \(A_y^*\) are uniformly bounded on \(Y\). Since
\(\mu\) is supported in \(B(0,R_0)\), the chain rule gives, for every
multi-index \(\alpha\),
\[
\begin{aligned}
&
\left|
\partial_y^\alpha
\left[
2^{mk}\psi\bigl(2^m(x-A_y^*z)\bigr)
\right]
\right|
\\
&\qquad\lesssim_{\psi,\alpha,V,R_0}
2^{m(k+|\alpha|)}
\sum_{|\beta|\leq|\alpha|}
\left|
(\partial^\beta\psi)
\bigl(2^m(x-A_y^*z)\bigr)
\right|.
\end{aligned}
\]
Taking the \(L^q(\mathbb R^k)\)-norm in \(x\), Minkowski's integral
inequality gives, for \(Nq>k\),
\[
\begin{aligned}
\|\partial_y^\alpha G_y\|_{L^q(\mathbb R^k)}
&\lesssim_{\psi,\alpha,V,R_0}
2^{m(k+|\alpha|)}
\int
\left\|
\bigl(1+2^m|\cdot-A_y^*z|\bigr)^{-N}
\right\|_{L^q(\mathbb R^k)}
\,d\mu(z)
\\
&=
2^{m(k+|\alpha|)}
\int
\left(
\int_{\mathbb R^k}
\bigl(1+2^m|x-A_y^*z|\bigr)^{-Nq}
\,dx
\right)^{1/q}
d\mu(z)
\\
&=
2^{m(k+|\alpha|)}2^{-mk/q}
\left(
\int_{\mathbb R^k}
(1+|u|)^{-Nq}\,du
\right)^{1/q}
\int d\mu(z)
\\
&\lesssim_{\psi,q,\alpha,V,R_0}
2^{m(k+|\alpha|-k/q)}
=
2^{m(|\alpha|+k/q')},
\text{ for }
q'=\frac{q}{q-1},
\end{aligned}
\]
where we used the change of variables
\(u=2^m(x-A_y^*z)\) and the fact that \(\mu\) is a probability measure.
So,
\begin{equation} \label{eq: estimates}
\|G_y\|_{L^q(\mathbb R^k)}
\lesssim_{\psi,q,V,R_0}
2^{mk/q'},\text{ and }
\|\partial_{y_i}G_y\|_{L^q(\mathbb R^k)} \lesssim_{\psi,q,V,R_0} 2^{m(1+k/q')}, \quad 1\leq i\leq r;
\end{equation}
$\text{ Consequently, } 
\|G_y-G_{y'}\|_{L^q(\mathbb R^k)}
\lesssim_{\psi,q,V,R_0}
2^{m(1+k/q')}|y-y'|.$\newline

\noindent{\textbf{The case $q=2$.}} 
First, assume \(q=2\).  In this case
$F_{\mu,m}(W_y)=\|G_y\|_{L^2(\mathbb R^k)}^2$
is quadratic in \(G_y\).  For every integer \(j\geq0\), differentiating \(j\)
times in \(y\) gives a finite sum of terms of the form
\[
\int_{\mathbb R^k}
\partial_y^{j_1}G_y(x)\,
\overline{\partial_y^{j_2}G_y(x)}\,dx,
\qquad j_1+j_2=j.
\]
Using the Cauchy–Schwarz inequality and the general scale estimate
immediately preceding \eqref{eq: estimates} specialized to \(q=2\), each such
term is bounded by
$$2^{m(j_1+k/2)}2^{m(j_2+k/2)}
=
2^{m(j+k)}.$$
Thus, for every integer \(j\geq0\),
$$\|\partial_y^jF_{\mu,m}\|_{L^\infty(Y)}
\lesssim_{\psi,j,V,R_0}
2^{m(j+k)}.$$
Therefore, for every integer \(N\geq0\),
\[
\|F_{\mu,m}\|_{H^N(Y)}
\lesssim_{\psi,N,V,R_0}
\sum_{|\alpha|\leq N}
\|\partial_y^\alpha F_{\mu,m}\|_{L^2(Y)}.
\]
Since \(Y\) has finite measure and
$\|\partial_y^\alpha F_{\mu,m}\|_{L^\infty(Y)}
\lesssim_{\psi,|\alpha|,V,R_0}
2^{m(|\alpha|+k)},$
we get
$$\|F_{\mu,m}\|_{H^N(Y)}
\lesssim_{\psi,N,V,R_0}
2^{m(N+k)}.$$
Now let \(\sigma\geq0\).  Choose an integer \(N\leq\sigma<N+1\), and write
$$\sigma=(1-\theta)N+\theta(N+1),
\, 0\leq\theta<1.$$
Interpolating between \(H^N(Y)\) and \(H^{N+1}(Y)\), we obtain
\[
\|F_{\mu,m}\|_{H^\sigma(Y)}
\lesssim
\|F_{\mu,m}\|_{H^N(Y)}^{1-\theta}
\|F_{\mu,m}\|_{H^{N+1}(Y)}^\theta .
\]
Using the integer estimates gives
$$\|F_{\mu,m}\|_{H^\sigma(Y)}
\lesssim_{\psi,\sigma,V,R_0}
\left(2^{m(N+k)}\right)^{1-\theta}
\left(2^{m(N+1+k)}\right)^\theta
=
2^{m(\sigma+k)}.$$ This is the desired estimate when \(q=2\).
\newline
\medskip

\medskip
\noindent{\textbf{The case  \(1<q<2,\, 0\leq\sigma<1\).}} Assume first that \(1<q<2\) and  \(0\leq\sigma<1\).  By the mean value theorem
$$\bigl||z|^q-|w|^q\bigr|
\lesssim_q
\bigl(|z|^{q-1}+|w|^{q-1}\bigr)|z-w|$$
implies, by Hölder's inequality and \eqref{eq: estimates}, that
\[
\begin{aligned}
|F_{\mu,m}(W_y)-F_{\mu,m}(W_{y'})|
&=
\left|
\int_{\mathbb R^k}
\bigl(|G_y(x)|^q-|G_{y'}(x)|^q\bigr)\,dx
\right|
\\
&\leq
\int_{\mathbb R^k}
\bigl||G_y(x)|^q-|G_{y'}(x)|^q\bigr|\,dx
\\
&\lesssim_q
\int_{\mathbb R^k}
\bigl(|G_y(x)|^{q-1}+|G_{y'}(x)|^{q-1}\bigr)
|G_y(x)-G_{y'}(x)|\,dx
\\
&\lesssim_q
\Bigl(
\|G_y\|_{L^q(\mathbb R^k)}^{q-1}
+
\|G_{y'}\|_{L^q(\mathbb R^k)}^{q-1}
\Bigr)
\|G_y-G_{y'}\|_{L^q(\mathbb R^k)}
\\
&\lesssim_{\psi,q,V,R_0}
2^{m(1+k(q-1))}|y-y'|.
\end{aligned}
\]
Also, by \eqref{eq: estimates},
$$|F_{\mu,m}(W_y)|
=
\|G_y\|_q^q
\lesssim_{\psi,q,V,R_0}
2^{mk(q-1)}.$$
Since \(Y\) is bounded, and  since \(m_V\) is a probability measure, these two estimates
imply
$$\|F_{\mu,m}\|_{L^2(Y)}
\lesssim_{\psi,q,V,R_0}
2^{mk(q-1)},\text{ and }
\|F_{\mu,m}\|_{H^1(Y)}
\lesssim_{\psi,q,V,R_0}
2^{m(1+k(q-1))}.$$
By interpolation between \(L^2(Y)\) and \(H^1(Y)\), since
\(0\leq\sigma<1\),
\[
\|F_{\mu,m}\|_{H^\sigma(Y)}
\lesssim_{\psi,q,\sigma,V,R_0}
\|F_{\mu,m}\|_{L^2(Y)}^{1-\sigma}
\|F_{\mu,m}\|_{H^1(Y)}^\sigma .
\]
Therefore,
$$\|F_{\mu,m}\|_{H^\sigma(Y)}
\lesssim_{\psi,q,\sigma,V,R_0}
\left(2^{mk(q-1)}\right)^{1-\sigma}
\left(2^{m(1+k(q-1))}\right)^\sigma
=
2^{m(\sigma+k(q-1))}.$$
\newline

\medskip
\noindent{\textbf{The case  \(1<q<2,\, 1\leq\sigma<q\).}}
We treat  the case \(1<q<2\) and \(1\leq\sigma<q\).  Write
\(\sigma=1+\tau\), where \(0\leq\tau<q-1\).  The map
$\mathcal N(g)=\|g\|_{L^q(\mathbb R^k)}^q$
is \(C^1\) on \(L^q(\mathbb R^k)\), with
$$D\mathcal N(g)[h]
=
q\,\operatorname{Re}\int_{\mathbb R^k}|g|^{q-2}\overline g\,h\,dx.$$
For each \(1\leq i\leq r=\dim Y\), the Banach-space chain rule and the
identity
$F_{\mu,m}(W_y)=\mathcal N(G_y)$
give
\begin{equation} \label{eq: formula}
\begin{aligned}
\partial_{y_i}F_{\mu,m}(W_y)
&=
D\mathcal N(G_y)\bigl[\partial_{y_i}G_y\bigr]
\\
&=
q\,\operatorname{Re}
\int_{\mathbb R^k}
|G_y|^{q-2}\overline{G_y}\,
\partial_{y_i}G_y\,dx.
\end{aligned}
\end{equation}
Hölder's inequality gives
\begin{equation} \label{eq: L infty bound}
|\partial_{y_i}F_{\mu,m}(W_y)|
\lesssim_q
\|G_y\|_q^{q-1}\|\partial_{y_i}G_y\|_q
\lesssim_{\psi,q,V,R_0}
2^{m(1+k(q-1))}.
\end{equation}

We next estimate the Hölder modulus of \(\partial_{y_i}F_{\mu,m}\).  Put
$J(z):=|z|^{q-2}\overline z,$ for $z\in\mathbb C.$ 
Since \(1<q<2\), the map \(J\) is \((q-1)\)-Hölder. 
Consequently,
\[
\|J(G_y)-J(G_{y'})\|_{L^{q'}(\mathbb R^k)}
\lesssim_q
\|G_y-G_{y'}\|_{L^q(\mathbb R^k)}^{q-1}.
\]
For each \(1\leq i\leq r\), \eqref{eq: estimates} applied with
\(|\alpha|=1\) and \(|\alpha|=2\), gives
\[
\|\partial_{y_i}G_y\|_{L^q(\mathbb R^k)}
\lesssim_{\psi,q,V,R_0}
2^{m(1+k/q')}, \text{ and }
\|\partial_{y_j}\partial_{y_i}G_y\|_{L^q(\mathbb R^k)}
\lesssim_{\psi,q,V,R_0}
2^{m(2+k/q')}
\,\,
1\leq j\leq r.
\]
Hence, by the triangle inequality,
\[
\|\partial_{y_i}G_y-\partial_{y_i}G_{y'}\|_{L^q(\mathbb R^k)}
\lesssim_{\psi,q,V,R_0}
2^{m(1+k/q')}.
\]
On the other hand, since \(Y\) is convex, the fundamental theorem of
calculus gives
\[
\partial_{y_i}G_y-\partial_{y_i}G_{y'}
=
\int_0^1
D_y\partial_{y_i}G_{y'+t(y-y')}[y-y']\,dt.
\]
Therefore,
\[
\|\partial_{y_i}G_y-\partial_{y_i}G_{y'}\|_{L^q(\mathbb R^k)}
\lesssim_{\psi,q,V,R_0}
2^{m(2+k/q')}|y-y'|.
\]
Combining these two bounds, we obtain
\[
\|\partial_{y_i}G_y-\partial_{y_i}G_{y'}\|_{L^q(\mathbb R^k)}
\lesssim_{\psi,q,V,R_0}
\min\left\{
2^{m(1+k/q')},
\,
2^{m(2+k/q')}|y-y'|
\right\}.
\]
Since
$\min\{A,B\}\leq A^{2-q}B^{q-1},
\, A,B\geq0,$
it follows that
\[
\|\partial_{y_i}G_y-\partial_{y_i}G_{y'}\|_{L^q(\mathbb R^k)}
\lesssim_{\psi,q,V,R_0}
2^{m(q+k/q')}|y-y'|^{q-1}.
\] By \eqref{eq: formula}, we write
\[
\begin{aligned}
&
\partial_{y_i}F_{\mu,m}(W_y)
-
\partial_{y_i}F_{\mu,m}(W_{y'})
\\
&\quad=
q\,\operatorname{Re}
\int_{\mathbb R^k}
\bigl(J(G_y)-J(G_{y'})\bigr)
\partial_{y_i}G_y\,dx
\\
&\qquad+
q\,\operatorname{Re}
\int_{\mathbb R^k}
J(G_{y'})
\bigl(
\partial_{y_i}G_y-\partial_{y_i}G_{y'}
\bigr)\,dx.
\end{aligned}
\]
By Hölder's inequality, the absolute value of the first term is bounded by
\[
\|J(G_y)-J(G_{y'})\|_{q'}
\|\partial_{y_i}G_y\|_q
\lesssim_{\psi,q,V,R_0}
2^{m(q+k(q-1))}|y-y'|^{q-1},
\]
while the second is bounded by
\[
\begin{aligned}
\|J(G_{y'})\|_{q'}
\|\partial_{y_i}G_y-\partial_{y_i}G_{y'}\|_q
&=
\|G_{y'}\|_q^{q-1}
\|\partial_{y_i}G_y-\partial_{y_i}G_{y'}\|_q
\\
&\lesssim_{\psi,q,V,R_0}
2^{m(q+k(q-1))}|y-y'|^{q-1}.
\end{aligned}
\] Combining these estimates gives
\[
|\partial_{y_i}F_{\mu,m}(W_y)
-
\partial_{y_i}F_{\mu,m}(W_{y'})|
\lesssim_{\psi,q,V,R_0}
2^{m(q+k(q-1))}|y-y'|^{q-1}.
\]
Thus
\begin{equation} \label{eq: thus}
\|\partial_yF_{\mu,m}\|_{C^{q-1}(Y)}
\lesssim_{\psi,q,V,R_0}
2^{m(q+k(q-1))}.
\end{equation}

We now pass from the Hölder estimate on
\(\partial_yF_{\mu,m}\) to a fractional Sobolev estimate. Set
$\gamma:=q-1\in(0,1).$ Recall that \(\sigma=1+\tau\) where \(0\leq\tau<q-1\); so, for \(\tau=0\), the required estimate follows directly from the
\(L^\infty(Y)\)-bound \eqref{eq: L infty bound}. Assume therefore that
$0<\tau<\gamma.$
For \(1\leq i\leq r\), define the scalar function
\[
h_i:Y\to\mathbb R,
\qquad
h_i(y):=\partial_{y_i}\bigl(F_{\mu,m}(W_y)\bigr),
\]
and put
$M_i:=\|h_i\|_{L^\infty(Y)},
\,
K_i:=\|h_i\|_{C^\gamma(Y)}.$
Write
\[
[h_i]_{H^\tau(Y)}^2
=
\int_Y\int_Y
\frac{|h_i(y)-h_i(y')|^2}
{|y-y'|^{r+2\tau}}
\,dy'\,dy.
\]
Since \(Y\subset\mathbb R^r\) is a bounded Lipschitz domain, the
Gagliardo-Slobodeckij characterization of fractional Sobolev spaces
(see, for example, \cite[Chapter~4]{Taylor1996partial}) gives, for
\(0<\tau<1\),
\[
\|h_i\|_{H^\tau(Y)}^2
\asymp_{Y,\tau}
\|h_i\|_{L^2(Y)}^2
+
[h_i]_{H^\tau(Y)}^2.
\]

Since
$|h_i(y)-h_i(y')|
\leq
\min\{2M_i,K_i|y-y'|^\gamma\},$
we obtain
\[
\begin{aligned}
[h_i]_{H^\tau(Y)}^2
&\leq
\int_Y\int_Y
\frac{
\min\{4M_i^2,K_i^2|y-y'|^{2\gamma}\}
}{
|y-y'|^{r+2\tau}
}
\,dy'\,dy
\\
&\lesssim_Y
\int_0^{\operatorname{diam}Y}
\frac{
\min\{4M_i^2,K_i^2\rho^{2\gamma}\}
}{
\rho^{1+2\tau}
}
\,d\rho.
\end{aligned}
\]
Splitting the integral at
$\rho_0:=\left(\frac{M_i}{K_i}\right)^{1/\gamma},$
with the evident interpretation if \(M_i=0\), and truncating at
\(\operatorname{diam}Y\) if necessary, gives
\[
\begin{aligned}
[h_i]_{H^\tau(Y)}^2
&\lesssim_Y
K_i^2
\int_0^{\rho_0}
\rho^{2\gamma-2\tau-1}\,d\rho
+
M_i^2
\int_{\rho_0}^{\operatorname{diam}Y}
\rho^{-2\tau-1}\,d\rho
\\
&\lesssim_{Y,\gamma,\tau}
M_i^{2(1-\tau/\gamma)}
K_i^{2\tau/\gamma}.
\end{aligned}
\]
Since \(K_i\geq M_i\), the same expression also controls
\(\|h_i\|_{L^2(Y)}^2\). Hence
\begin{equation}
\label{eq:holder-to-fractional-sobolev}
\|h_i\|_{H^\tau(Y)}
\lesssim_{Y,\gamma,\tau}
M_i^{1-\tau/\gamma}
K_i^{\tau/\gamma}.
\end{equation}
By \eqref{eq: L infty bound} and \eqref{eq: thus}
\[
M_i
\lesssim_{\psi,q,V,R_0}
2^{m(1+k(q-1))}
\text{  and  }
K_i
\lesssim_{\psi,q,V,R_0}
2^{m(q+k(q-1))},
\]
we obtain from \eqref{eq:holder-to-fractional-sobolev}
\[
\begin{aligned}
\|\partial_{y_i}F_{\mu,m}\|_{H^\tau(Y)}
&\lesssim_{\psi,q,\tau,V,R_0}
\left(
2^{m(1+k(q-1))}
\right)^{1-\tau/(q-1)}
\left(
2^{m(q+k(q-1))}
\right)^{\tau/(q-1)}
\\
&=
2^{m(1+\tau+k(q-1))}.
\end{aligned}
\]

By the standard local characterization of Sobolev spaces
(see, for example, \cite[Chapter~4]{Taylor1996partial}),
\[
\|F_{\mu,m}\|_{H^{1+\tau}(Y)}
\lesssim_{Y,\tau}
\|F_{\mu,m}\|_{L^2(Y)}
+
\sum_{i=1}^r
\|\partial_{y_i}F_{\mu,m}\|_{H^\tau(Y)}.
\]
Since
$\|F_{\mu,m}\|_{L^2(Y)}
\lesssim_{\psi,q,V,R_0}
2^{mk(q-1)},$
we conclude that
\[
\|F_{\mu,m}\|_{H^{1+\tau}(Y)}
\lesssim_{\psi,q,\tau,V,R_0}
2^{m(1+\tau+k(q-1))}.
\]
Writing \(\sigma=1+\tau\), this gives
\[
\|F_{\mu,m}\|_{H^\sigma(Y)}
\lesssim_{\psi,q,\sigma,V,R_0}
2^{m(\sigma+k(q-1))}
\]
for every \(1\leq\sigma<q\).  This completes the proof in the case
\(1<q<2\).
\newline
\medskip

The estimates above are uniform over a finite atlas of the compact manifold
\(\mathcal O_V\).  Summing over a partition of unity gives
$\|F_{\mu,m}\|_{H^\sigma(\mathcal O_V)}
\leq
C_{\psi,q,\sigma,V,R_0}\,
2^{m(\sigma+k(q-1))}.$
This proves the proposition.
\end{proof}

We shall use the preceding proposition together with
Claim~\ref{claim:stopped-cylinder-mixing} in the following form.

\begin{cor}
\label{cor:besov-stopped-cylinder-mixing}
Let \(R_0>0\), and suppose that
\(\mu\in\mathcal P(\mathbb R^d)\) is supported in \(B(0,R_0)\).  Assume either
that
$1<q<2
\,\text{ and }
\frac12\dim\mathcal O_V<\sigma<q,$
or 
$q=2
\,\text{ and } 
\frac12\dim\mathcal O_V<\sigma.$
Then there exists a constant
\(C_{\psi,q,\sigma,V,R_0}<\infty\) such that for every \(m\geq1\), every
\(L>0\), every \(\eta\in\Omega_L\), and every \(W\in\mathcal O_V\),
\[
\begin{aligned}
&
\sum_{u\in X^{(\eta)}_{|\eta|}}
p^{(\eta)}(u)\,
\left\|
(\pi_{O_u^{-1}W}\mu)*\psi_m^{O_u^{-1}W}
\right\|_{L^q(O_u^{-1}W)}^q
\\
&\qquad\leq
\int_{\mathcal O_V}
\left\|
(\pi_U\mu)*\psi_m^U
\right\|_{L^q(U)}^q
\,dm_V(U)
\\
&\qquad\quad+
C_{\psi,q,\sigma,V,R_0}\,
2^{m(\sigma+k(q-1))}
\prod_{\ell=1}^{|\eta|}
\left\|
P_{\rho_{\eta_\ell},V}
\right\|_{L^2_0(\mathcal O_V)\to L^2 _0(\mathcal O_V)} .
\end{aligned}
\]
\end{cor}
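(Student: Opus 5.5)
The plan is to apply Claim~\ref{claim:stopped-cylinder-mixing} to the function $F:=F_{\mu,m}$ on $\mathcal O_V$, and then to bound the Sobolev norm of its zero-mean part using Proposition~\ref{prop:besov-orbit-sobolev-bound}.

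First, observe that the left-hand side of the corollary is exactly
\[
\sum_{u\in X^{(\eta)}_{|\eta|}}p^{(\eta)}(u)\,F_{\mu,m}(O_u^{-1}W),
\]
since by definition $F_{\mu,m}(U)=\|(\pi_U\mu)*\psi_m^U\|_{L^q(U)}^q$ for $U\in\mathcal O_V$, and $O_u^{-1}W\in\mathcal O_V$ because $O_u\in G$. Likewise, the first term on the right-hand side is $\int_{\mathcal O_V}F_{\mu,m}\,dm_V$. It therefore suffices to bound the difference of these two quantities from above by the error term.

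By Proposition~\ref{prop:besov-orbit-sobolev-bound}, whose hypotheses on $(q,\sigma)$ are implied by ours, $F_{\mu,m}\in H^\sigma(\mathcal O_V)$ with $\|F_{\mu,m}\|_{H^\sigma}\leq C2^{m(\sigma+k(q-1))}$. Since $\sigma>\frac12\dim\mathcal O_V$, Claim~\ref{claim:stopped-cylinder-mixing} applies to $F_{\mu,m}$. It bounds the absolute value of the difference by
\[
C_{V,\sigma}\prod_\ell\|P_{\rho_{\eta_\ell},V}\|_{L^2_0}\cdot\|F_{\mu,m}-\overline{F_{\mu,m}}\|_{H^\sigma(\mathcal O_V)}.
\]

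It remains to compare $\|F-\overline F\|_{H^\sigma}$ with $\|F\|_{H^\sigma}$. Since $A_\sigma=(I+\Delta)^{\sigma/2}$ fixes constants and is self-adjoint, the decomposition $F=\overline F+F_0$ is orthogonal in $H^\sigma$. Hence $\|F_0\|_{H^\sigma}\leq\|F\|_{H^\sigma}$. Combining the constants into $C_{\psi,q,\sigma,V,R_0}$ then gives the claimed bound.

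No step is a genuine obstacle, since all the analytic work is contained in Proposition~\ref{prop:besov-orbit-sobolev-bound}. The only point to check is the orthogonality that allows the mean to be removed without loss.
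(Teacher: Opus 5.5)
Your proposal is correct and follows the paper's proof essentially verbatim. You identify the left side with $\sum_u p^{(\eta)}(u)F_{\mu,m}(O_u^{-1}W)$ and the first term on the right with $\int F_{\mu,m}\,dm_V$, apply Claim~\ref{claim:stopped-cylinder-mixing}, and bound $\|F_{\mu,m}-\overline{F_{\mu,m}}\|_{H^\sigma}$ using Proposition~\ref{prop:besov-orbit-sobolev-bound}. Your orthogonality argument, that $(I+\Delta_{\mathcal O_V})^{\sigma/2}$ is self-adjoint and fixes constants so $\|F_0\|_{H^\sigma}\leq\|F\|_{H^\sigma}$, is a slightly sharper justification of the step the paper handles by saying that subtracting the mean changes the $H^\sigma$-norm by at most a constant.
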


\begin{proof}
Apply Claim~\ref{claim:stopped-cylinder-mixing} to the function
$F_{\mu,m}(U)
=
\left\|
(\pi_U\mu)*\psi_m^U
\right\|_{L^q(U)}^q,
\, U\in\mathcal O_V.$ 
Since \(O_u^{-1}W\in\mathcal O_V\), the left hand side in the Corollary is precisely
$\sum_{u\in X^{(\eta)}_{|\eta|}}
p^{(\eta)}(u)\,
F_{\mu,m}(O_u^{-1}W).$
The orbit average of \(F_{\mu,m}\) is
\[
\int_{\mathcal O_V}
F_{\mu,m}(U)\,dm_V(U)
=
\int_{\mathcal O_V}
\left\|
(\pi_U\mu)*\psi_m^U
\right\|_{L^q(U)}^q
\,dm_V(U).
\]
Therefore Claim~\ref{claim:stopped-cylinder-mixing} gives
\[
\begin{aligned}
&
\sum_{u\in X^{(\eta)}_{|\eta|}}
p^{(\eta)}(u)\,
F_{\mu,m}(O_u^{-1}W)
\\
&\qquad\leq
\int_{\mathcal O_V}F_{\mu,m}\,dm_V
+
C_{V,\sigma}
\prod_{\ell=1}^{|\eta|}
\left\|
P_{\rho_{\eta_\ell},V}
\right\|_{L^2_0(\mathcal O_V)\to L^2_0(\mathcal O_V)}
\\
&\qquad\quad\cdot
\left\|
F_{\mu,m}
-
\int_{\mathcal O_V}F_{\mu,m}\,dm_V
\right\|_{H^\sigma(\mathcal O_V)} .
\end{aligned}
\]
Subtracting the mean changes the \(H^\sigma(\mathcal O_V)\)-norm by at most a
constant depending only on \(\mathcal O_V\).  Hence
Proposition~\ref{prop:besov-orbit-sobolev-bound} gives
\[
\left\|
F_{\mu,m}
-
\int_{\mathcal O_V}F_{\mu,m}\,dm_V
\right\|_{H^\sigma(\mathcal O_V)}
\lesssim_{\psi,q,\sigma,V,R_0}
2^{m(\sigma+k(q-1))}.
\]
Combining the last two estimates proves the corollary.
\end{proof}

\section{Energy of projections orbit relative dimensions}
\label{Section Peres schlag}
\subsection{$L^2$ theory}

In this section we discuss some spherical averages  needed in the proof of
Theorem~\ref{thm:main}. To motivate our discussion, let  us first recall an estimate of Peres and Schlag \cite[Proposition 2.2]{Peres200Schlag} regarding line projections. Let \(\nu\in\mathcal P(\mathbb R^d)\) be compactly supported.  Then,
\begin{equation} \label{eq energy id}
I_t(\nu)
\asymp_{d,t}
\int_{\mathbb R^d}
|\widehat\nu(\xi)|^2|\xi|^{t-d}\,d\xi \asymp_{d,t}
\int_0^\infty R^{t-1}
\int_{S^{d-1}}|\mathcal{F}_{R \omega} (\nu)|^2\,d\sigma(\omega)\,dR.
\end{equation}
See \cite[Theorem 3.10]{Mattila2015Fourier} for the first equality; also, 
note the use of polar coordinates in the second equality, where we denote the normalized surface measure on \(S^{d-1}\) by  \(\sigma =\sigma_d \). Using \eqref{eq energy id}, Peres and Schlag noted the following:
\begin{lemma}[{\cite[Proposition~2.2]{Peres200Schlag}}]
\label{lem:peres-schlag-line}
Let $\nu\in\mathcal P(\mathbb R^d)$ be compactly supported, and let
$0<t<d$. Then
\[
\int_{S^{d-1}}
\int_{\mathbb R}
|R|^{t-1}
\left|\mathcal F_R(\pi_\omega\nu)\right|^2\,dR\,d\sigma(\omega)
\asymp_{d,t}
I_t(\nu).
\]
\end{lemma}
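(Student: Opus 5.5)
The plan is to write the left-hand side as an integral of $|\widehat\nu|^2$ over $\mathbb R^d$ in polar coordinates, and then compare it with the energy identity \eqref{eq energy id}.

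First, for $\omega\in S^{d-1}$ and $R\in\mathbb R$ we have $\mathcal F_R(\pi_\omega\nu)=\widehat\nu(R\omega)$. This holds because $\pi_\omega\nu$ is the pushforward of $\nu$ under $x\mapsto\langle x,\omega\rangle$, so that
\[
\int e^{-2\pi i R\langle x,\omega\rangle}\,d\nu(x)=\widehat\nu(R\omega).
\]

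Second, I split the inner integral into $R>0$ and $R<0$. The substitution $R\mapsto -R$ turns the negative half into $\int_0^\infty R^{t-1}|\widehat\nu(-R\omega)|^2\,dR$. Since $\sigma$ is invariant under $\omega\mapsto-\omega$, this half contributes the same as the positive half after integrating in $\omega$. (Alternatively, $|\widehat\nu(-\xi)|=|\widehat\nu(\xi)|$ because $\nu$ is a real measure.) The left-hand side therefore equals
\[
2\int_{S^{d-1}}\int_0^\infty R^{t-1}|\widehat\nu(R\omega)|^2\,dR\,d\sigma(\omega).
\]
By Tonelli, which applies since the integrand is nonnegative, the order of integration can be swapped. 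The result is exactly twice the last expression in \eqref{eq energy id}.

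Third, I invoke \eqref{eq energy id}. The first comparison there is the classical identity $I_t(\nu)=c_{d,t}\int|\widehat\nu(\xi)|^2|\xi|^{t-d}\,d\xi$ for $0<t<d$ (Mattila, Theorem 3.10). This is an equality with an explicit constant, valid including the case where both sides are infinite, and it is where the hypothesis $0<t<d$ is used. The second comparison is polar coordinates: $d\xi=|S^{d-1}|\,R^{d-1}\,dR\,d\sigma(\omega)$, so that $|\xi|^{t-d}\,d\xi=|S^{d-1}|\,R^{t-1}\,dR\,d\sigma(\omega)$. Chaining these gives the claimed $\asymp_{d,t}$, and in fact an equality up to the constant $2\,c_{d,t}^{-1}|S^{d-1}|^{-1}$ relating the left-hand side to $I_t(\nu)$.

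There is no real obstacle. The only delicate point is that the Fourier energy identity must hold in the extended sense, with both sides possibly infinite. The standard proof handles this by computing $\int (|\cdot|^{-t}*\nu)\,d\nu$ via the Fourier transform of the Riesz kernel $|x|^{-t}$, which is $c\,|\xi|^{t-d}$, together with a mollification and monotone-convergence argument. Compact support of $\nu$ is not essential here but ensures everything is well defined. I would simply cite this identity as stated in \eqref{eq energy id}.
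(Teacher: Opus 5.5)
Your argument is correct and is essentially the paper's own justification: the paper obtains the lemma directly from \eqref{eq energy id}, that is, from Mattila's Fourier energy identity combined with polar coordinates, using $\mathcal F_R(\pi_\omega\nu)=\mathcal F_{R\omega}(\nu)$. Your treatment of $R<0$ via $|\widehat\nu(-\xi)|=|\widehat\nu(\xi)|$ and Tonelli just makes explicit the factor $2$, and the fact that the comparison is an equality up to a constant depending on $d,t$, valid also when both sides are infinite.
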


The estimate in Lemma~\ref{lem:peres-schlag-line} is expressed in terms of
the spherical average \newline
$
\int_{S^{d-1}}
|\mathcal F_{R\omega}(\nu)|^2\,d\sigma(\omega).$
However, our  argument  yields equidistribution only
along the rotational orbit of the prescribed subspace $V$. This motivates our orbit-relative analogue, the notion of $\dim_2 ^V$. Let us recall it.  For \(G\leq SO(d)\)  a closed subgroup and \(V\in G_{d,k}\),  write as usual
$\mathcal O_V:=G\cdot V.$
Let \(m_V\) be the normalized \(G\)-invariant probability measure on
\(\mathcal O_V\).  For \(W\in G_{d,k}\), write the sphere
$S(W):=\{\theta\in W:|\theta|=1\},$
and let \(\sigma_W\) be the normalized surface measure on \(S(W)\).  For
\(\nu\in\mathcal P(\mathbb R^d)\), recall  the spherical average
\[
\mathcal S_{\nu,V}(R)
:=
\int_{\mathcal O_V}
\int_{S(W)}
|\mathcal F_{R\theta}(\nu)|^2\,d\sigma_W(\theta)\,dm_V(W),
        \qquad R>0.
\]
And recall that 
$\dim_2^V\nu
:=
\sup\left\{
0\leq t\leq d:
\int_1^\infty R^{t-1}\mathcal S_{\nu,V}(R)\,dR<\infty
\right\}.$

When $G=\operatorname{SO}(d)$, this notion agrees with the usual $L^2$-dimension:
\begin{lemma} \label{lem:relative-dimension-full-SO}
If \(G=SO(d)\) then for every $V\in G_{d,k}$ we have $\dim_2 \nu = \dim_2^V\nu$.
\end{lemma}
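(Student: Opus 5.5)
When $G=\mathrm{SO}(d)$, the orbit average collapses to the ordinary spherical average, and the claim then follows from the energy identity \eqref{eq energy id}. The plan is to prove the pointwise identity
\[
\mathcal S_{\nu,V}(R)=\int_{S^{d-1}}|\mathcal F_{R\omega}(\nu)|^2\,d\sigma(\omega)\qquad\text{for every }R>0,
\]
where $\sigma$ is the normalized surface measure on $S^{d-1}$. Once this holds, $\int_1^\infty R^{t-1}\mathcal S_{\nu,V}(R)\,dR$ is exactly the tail of the polar-coordinate integral in \eqref{eq energy id}.

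For the identity, note that since $G=\mathrm{SO}(d)$ acts transitively on $G_{d,k}$, we have $\mathcal O_V=G_{d,k}$. Moreover $m_V$ is the pushforward of Haar measure $m_G$ under $g\mapsto gV$, by uniqueness of the invariant probability measure. Fix a unit vector $\theta_0\in V$. Then $g\mapsto g\theta_0$ pushes $m_G$ to $\sigma$, since $\mathrm{SO}(d)$ is transitive on $S^{d-1}$ for $d\geq 2$ and $\sigma$ is its unique invariant probability. Now consider the measure $\kappa$ on $S^{d-1}$ defined by
\[
\int h\,d\kappa=\int_{\mathcal O_V}\int_{S(W)}h(\theta)\,d\sigma_W(\theta)\,dm_V(W).
\]
This measure is $\mathrm{SO}(d)$-invariant: for $g\in\mathrm{SO}(d)$, the map $g$ carries $\sigma_W$ to $\sigma_{gW}$, and $m_V$ is $g$-invariant. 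By uniqueness, $\kappa=\sigma$. Applying this with $h(\theta)=|\mathcal F_{R\theta}(\nu)|^2$, which is continuous and bounded, gives the identity.

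Finally, assume compact support of $\nu$, as in \eqref{eq energy id} (self-similar measures are compactly supported). By \eqref{eq energy id},
\[
I_t(\nu)\asymp\int_0^\infty R^{t-1}\int_{S^{d-1}}|\mathcal F_{R\omega}(\nu)|^2\,d\sigma\,dR \quad\text{for } 0<t<d.
\]
The piece of this integral over $[0,1]$ is bounded by $\int_0^1R^{t-1}\,dR<\infty$, since $|\mathcal F|\le 1$ and $t>0$. Hence for $0<t<d$, $I_t(\nu)<\infty$ holds if and only if $\int_1^\infty R^{t-1}\mathcal S_{\nu,V}(R)\,dR<\infty$. Taking suprema over $t$ gives $\dim_2\nu=\dim_2^V\nu$.

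There are two boundary cases. First, at $t=0$ both sets trivially contain $0$. Second, the endpoint $t=d$ does not affect the supremum: both sets are down-closed intervals, because $R^{t-1}$ is monotone in $t$ for $R\geq 1$ and $I_t$ is monotone in $t$ for compactly supported measures up to constants. So their suprema agree even if $t=d$ is included in one set and not the other.

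The only mildly delicate step is the invariance and uniqueness argument identifying $\kappa$ with $\sigma$. It uses measurability of $W\mapsto\int_{S(W)}h\,d\sigma_W$, which follows from writing this quantity as $\int h(g\theta)\,d\sigma_V(\theta)$ with $g$ ranging over a continuous choice of $g$ with $gV=W$. Alternatively, one can avoid that by averaging directly over $G$: $\int_G\int_{S(V)}h(g\theta)\,d\sigma_V(\theta)\,dm_G(g)=\int_{S(V)}\int_G h(g\theta)\,dm_G\,d\sigma_V=\int h\,d\sigma$, by Fubini.
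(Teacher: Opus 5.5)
Your proposal is correct and follows essentially the same route as the paper: identify the $m_V$-average of the measures $\sigma_W$ with $\sigma$, which you justify by invariance and uniqueness or by Fubini over $G$, whereas the paper simply asserts it; then apply \eqref{eq energy id}, discard $0<R<1$ using $|\mathcal F_\xi(\nu)|\leq 1$, and take suprema over $0<t<d$. One small slip in your boundary discussion is that $t=0$ need not belong to the set defining $\dim_2^V\nu$ (for $\nu=\delta_0$ one has $\mathcal S_{\nu,V}\equiv 1$, so $\int_1^\infty R^{-1}\,dR=\infty$), but this only matters in the degenerate case where both dimensions are $0$, where it reduces to the convention $\sup\emptyset=0$.
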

\begin{proof}
Put \(G=SO(d)\) and let $V\in G_{d,k}$. Then \(\mathcal O_V = G_{d,k}\), and the measure on \(S^{d-1}\) obtained by first choosing
\(W\in G_{d,k}\) according to Haar measure and then choosing
\(\theta\in S(W)\) according to \(\sigma_W\) is the normalized surface measure
\(\sigma\) on \(S^{d-1}\). Hence
\[
\mathcal S_{\nu,V}(R)
=
\int_{S^{d-1}}|\mathcal F_{R\omega}(\nu)|^2\,d\sigma(\omega).
\]
For every $0<t<d$
\eqref{eq energy id} gives
$I_t(\nu)
\asymp_{d,t}
\int_0^\infty
R^{t-1}\mathcal S_{\nu,V}(R)\,dR.$ 
Moreover, since $|\mathcal F_\xi(\nu)|\leq1$, the contribution of
$0<R<1$ is finite for every $t>0$. Hence
\[
I_t(\nu)<\infty
\quad\Longleftrightarrow\quad
\int_1^\infty
R^{t-1}\mathcal S_{\nu,V}(R)\,dR<\infty,
\qquad 0<t<d.
\]
Taking suprema over $t$ proves that
$\dim_2^V\nu=\dim_2\nu.$
\end{proof}

\subsection{Besov theory}
Let us briefly recall the Besov version of the relative \(L^q\)-dimension
introduced before Theorem \ref{thm:main}.  Fix \(1<q\leq2\), and let \(\psi\) be the admissible (Schwartz) radial
Littlewood-Paley function chosen in Section~\ref{subsec:besov-orbit-estimates}.
Thus, for every \(W\in G_{d,k}\) and every \(m\geq1\), the kernel
\(\psi_m^W\) is defined by
$\widehat{\psi_m^W}(\xi)=\widehat\psi(2^{-m}\xi),
\, \xi\in W$, is adapted to frequencies \(|\xi|\asymp2^m\), and satisfies \eqref{eq: LP partition}.

For \(S\geq k\) and \(\nu\in\mathcal P(\mathbb R^d)\), we defined
\[
\mathcal E^V_{q,S}(\nu)
:=
\sum_{m\geq1}
2^{m(q-1)(S-k)}
\int_{\mathcal O_V}
\left\|(\pi_W\nu)*\psi_m^W\right\|_{L^q(W)}^q
\,dm_V(W).
\]
Thus, 
$\dim_q^V\nu
:=
\sup\left\{
S\in(k,d]:
\mathcal E^V_{q,S'}(\nu)<\infty
\text{ for every }k<S'<S
\right\},$
with the convention that \(\dim_q^V\nu=k\) if the set on the
right-hand side is empty.  The low-frequency part of the
Littlewood-Paley decomposition is not included in \(\mathcal E^V_{q,S}\),
since it is harmless for finite measures and does not affect the dimension.
The value of \(\dim_q^V\nu\) is independent of the particular admissible
Littlewood--Paley partition, by the standard equivalence of Besov norms.

Let us now prove that, for \(q=2\), this definition agrees with the
spherical-average definition of \(\dim_2^V\nu\) used above.

\begin{lemma}
\label{lem:besov-spherical-l2-equivalence}
Let \(1\leq k<d\), be fixed. 
For \(S>k\), 
\[
\sum_{m\geq1}
2^{m(S-k)}
\int_{\mathcal O_V}
\left\|(\pi_W\nu)*\psi_m^W\right\|_{L^2(W)}^2
\,dm_V(W)
<\infty
\iff 
\int_1^\infty
R^{S-1}\mathcal S_{\nu,V}(R)\,dR<\infty.
\]
Consequently, the Besov definition of \(\dim_2^V\nu\) agrees with the spherical-average definition.
\end{lemma}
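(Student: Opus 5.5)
The approach is to compute each Littlewood--Paley piece by Plancherel on $W$ and then reassemble in polar coordinates. For $W\in\mathcal O_V$, since $\mathcal F_\xi(\pi_W\nu)=\mathcal F_\xi(\nu)$ for $\xi\in W$, Plancherel on $W\cong\mathbb R^k$ gives
\[
\left\|(\pi_W\nu)*\psi_m^W\right\|_{L^2(W)}^2
=\int_W|\widehat\psi(2^{-m}\xi)|^2|\mathcal F_\xi(\nu)|^2\,d\xi
= c_k\int_0^\infty R^{k-1}|\widehat\psi(2^{-m}R)|^2\int_{S(W)}|\mathcal F_{R\theta}(\nu)|^2\,d\sigma_W(\theta)\,dR .
\]
The second equality uses polar coordinates in $W$ together with the radiality of $\widehat\psi$. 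Integrating over $W\in\mathcal O_V$ against $m_V$ and using Fubini (everything is nonnegative), we obtain
\[
\int_{\mathcal O_V}\left\|(\pi_W\nu)*\psi_m^W\right\|_{L^2(W)}^2dm_V(W)
=c_k\int_0^\infty R^{k-1}|\widehat\psi(2^{-m}R)|^2\,\mathcal S_{\nu,V}(R)\,dR .
\]

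Multiply by $2^{m(S-k)}$ and sum over $m\geq1$. Since $\widehat\psi(2^{-m}\cdot)$ is supported in $2^{m-1}\le R\le 2^{m+1}$, on the support we have $2^{m(S-k)}\asymp_S R^{S-k}$. Hence the left-hand side of the lemma is comparable to
\[
\int_{1/2}^\infty R^{S-1}\Big(\sum_{m\ge1}|\widehat\psi(2^{-m}R)|^2\Big)\mathcal S_{\nu,V}(R)\,dR .
\]
Call $\Theta(R):=\sum_{m\geq1}|\widehat\psi(2^{-m}R)|^2$. Since at most three indices $m$ contribute at each $R$ and $|\widehat\psi|$ is bounded, $\Theta\lesssim1$. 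This gives the implication ``$\Leftarrow$'': the range $R\in[1/2,1]$ contributes a finite amount because $\mathcal S_{\nu,V}\le1$.

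The converse requires a lower bound $\Theta(R)\gtrsim1$ for $R\ge R_1$. This is the only point needing care, and I would get it from \eqref{eq: LP partition}. Since $\widehat{\varphi_0}$ is a low-frequency cutoff, it vanishes for $|\xi|\ge R_1$ for some $R_1$. There $\sum_m\widehat\psi(2^{-m}R)=1$ with at most three nonzero terms, so by Cauchy--Schwarz $\Theta(R)\ge1/3$.

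Then $\int_{R_1}^\infty R^{S-1}\mathcal S_{\nu,V}(R)\,dR<\infty$, and the range $[1,R_1]$ is trivially finite since $\mathcal S_{\nu,V}\le1$. If instead one only knows $\widehat{\varphi_0}$ is compactly supported with no specified radius, the same argument works with $R_1$ equal to that radius. Both directions are thus proved for each $S>k$.

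The final claim then follows by taking suprema over $S$ in the two definitions. Both suprema range over the same set of exponents, and the finiteness conditions coincide for each $S>k$. The conventions at $S\le k$ are immaterial, since both definitions concern $S>k$, or equal $k$ by convention when that set is empty.
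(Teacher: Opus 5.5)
Your proposal is correct and follows the paper's proof essentially step for step. The shared steps are Plancherel and polar coordinates on each $W$, Fubini over $\mathcal O_V$, and the two-sided bound $\sum_{m\geq1}2^{m(S-k)}|\widehat\psi(2^{-m}r)|^2\asymp r^{S-k}$ for large $r$ (lower bound from \eqref{eq: LP partition} and Cauchy--Schwarz), with the bounded range of $r$ discarded using $\mathcal S_{\nu,V}\leq 1$.

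One small caveat, which the paper shares: the spherical-average definition takes the supremum over $0\leq t\leq d$, not only over $t>k$, so your remark that both definitions concern only $S>k$ is not quite right. The two definitions of $\dim_2^V\nu$ coincide exactly when the common threshold is at least $k$, which is the only regime used later.
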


\begin{proof}
By Plancherel on \(W\), and since
$\mathcal{F}_\xi(\pi_W\nu)=\mathcal{F}_\xi(\nu),
\, \xi\in W,$
we have
\[
\left\|(\pi_W\nu)*\psi_m^W\right\|_{L^2(W)}^2
=
c_k
\int_W
\left|\mathcal{F}_\xi(\nu)\right|^2
\left|\widehat\psi(2^{-m}|\xi|)\right|^2
\,d\xi,
\]
where \(c_k>0\) depends only on the Fourier normalization. Passing to polar coordinates in \(W\), this becomes
\[
\left\|(\pi_W\nu)*\psi_m^W\right\|_{L^2(W)}^2
=
c_k'
\int_0^\infty
r^{k-1}
\left|\widehat\psi(2^{-m}r)\right|^2
\int_{S(W)}
\left|\mathcal{F}_{r\theta}(\nu)\right|^2
\,d\sigma_W(\theta)
\,dr .
\]
Integrating over \(W\in\mathcal O_V\), and using the definition of
\(\mathcal S_{\nu,V}\), gives
\[
\int_{\mathcal O_V}
\left\|(\pi_W\nu)*\psi_m^W\right\|_{L^2(W)}^2
\,dm_V(W)
=
c_k'
\int_0^\infty
r^{k-1}
\left|\widehat\psi(2^{-m}r)\right|^2
\mathcal S_{\nu,V}(r)
\,dr .
\]
Therefore
\[
\begin{aligned}
&\sum_{m\geq1}
2^{m(S-k)}
\int_{\mathcal O_V}
\left\|(\pi_W\nu)*\psi_m^W\right\|_{L^2(W)}^2
\,dm_V(W)
\\
&\qquad =
c_k'
\int_0^\infty
r^{k-1}
\left(
\sum_{m\geq1}
2^{m(S-k)}
\left|\widehat\psi(2^{-m}r)\right|^2
\right)
\mathcal S_{\nu,V}(r)\,dr .
\end{aligned}
\]
Since \(\widehat\psi\) is supported in a fixed dyadic annulus, for each
\(r\geq1\) only \(O_\psi(1)\) values of \(m\) contribute to $
\sum_{m\geq1}
2^{m(S-k)}
\left|\widehat\psi(2^{-m}r)\right|^2$, and for each
such \(m\) we have \(2^m\asymp_\psi r\). Moreover, for all sufficiently
large \(r\), \eqref{eq: LP partition} and the Cauchy--Schwarz inequality
give
$\sum_{m\geq1}
\left|\widehat\psi(2^{-m}r)\right|^2
\asymp_\psi 1.$
Consequently,
\[
\sum_{m\geq1}
2^{m(S-k)}
\left|\widehat\psi(2^{-m}r)\right|^2
\asymp_{\psi,S}
r^{S-k}.
\]
The remaining bounded range of \(r\) is harmless. 

It follows that the preceding sum is finite if and only if
$$\int_1^\infty
r^{k-1}r^{S-k}
\mathcal S_{\nu,V}(r)\,dr
=
\int_1^\infty
r^{S-1}\mathcal S_{\nu,V}(r)\,dr$$
is finite. The contribution of \(0<r<1\) is harmless as explained above, since
$\left|\mathcal{F}_\xi(\nu)\right|\leq1$
for all \(\xi\), and only finitely many Littlewood-Paley pieces can contribute
there. This proves the equivalence.

Finally, applying the equivalence with \(S'\) in place of \(S\), and taking the
corresponding suprema over \(S'\), shows that the Besov and spherical-average
definitions of \(\dim_2^V\nu\) coincide.
\end{proof}
When deducing Theorem \ref{thm: baby case} from Theorem \ref{thm:main}, we  need to compare the orbit-relative
\(L^q\)-dimension with the usual ambient \(L^q\)-dimension; we record this
comparison in the case where the orbit is the full Grassmannian.

We first require an equivalent definition of $\dim_q \nu$. Let \(\Psi\) be a smooth radial Schwartz function on \(\mathbb R^d\) such that
\(\widehat\Psi\in C_c^\infty(\mathbb R^d\setminus{0})\), and such that
$\widehat\Psi(\xi)=1
\,\text{ whenever }
\widehat\psi(|\xi|)\neq0.$
For \(m\geq1\), define $\Psi_m$ via
$\widehat{\Psi_m}(\xi)=\widehat\Psi(2^{-m}\xi),
\, \xi\in\mathbb R^d .$ 

\begin{lemma}
\label{lem:lp-usual-lq-dimension}
Let \(1<q<\infty\), and let \(\nu\in\mathcal P(\mathbb R^d)\) be compactly
supported.  Then
$$
\sup\left(
\{0\}\cup
\left\{
S\in(0,d):
\sum_{m\geq1}
2^{m(q-1)(S-d)}
\left\|\nu*\Psi_m\right\|_{L^q(\mathbb R^d)}^q
<\infty
\right\}
\right)
=
\liminf_{m\to\infty}
\frac{-\log\sum_{Q\in\mathcal D_m}\nu(Q)^q}
{(q-1)m\log2},
$$
where \(\mathcal D_m\) denotes the dyadic cubes of side \(2^{-m}\).
\end{lemma}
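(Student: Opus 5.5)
The plan is to prove that, for each fixed $m$, the Littlewood--Paley quantity $\|\nu*\Psi_m\|_{L^q}^q$ is comparable, up to harmless losses, to the dyadic moment $2^{md(q-1)}\sum_{Q\in\mathcal D_m}\nu(Q)^q$. Write $\tau(m):=-\log\sum_{Q\in\mathcal D_m}\nu(Q)^q/(m\log 2)$. The statement then reduces to an elementary fact: a series $\sum_m 2^{m(q-1)(S-d)}A_m$ with $A_m\approx 2^{m d(q-1)}2^{-m\tau(m)}$ converges for $S<\liminf \tau(m)/(q-1)$ and diverges for $S>\liminf\tau(m)/(q-1)$. The only caveat is that the high-pass piece $\nu*\Psi_m$ does not by itself control the whole mass at scale $2^{-m}$. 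We therefore pass through the low-pass pieces $\nu*\Phi_m$, where $\widehat{\Phi_m}=\widehat{\Phi}(2^{-m}\cdot)$ is a bump equal to $1$ near the origin.

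\textbf{Step 1 (low-pass versus dyadic).} We use the standard two-sided bound
\[
\|\nu*\Phi_m\|_q^q\asymp 2^{md(q-1)}\sum_{Q\in\mathcal D_m}\nu(Q)^q .
\]
The lower bound comes from choosing a nonnegative $\Phi$ with $\Phi\gtrsim 1$ on a ball, together with the comparability of the dyadic moment under neighbouring and shifted grids. The upper bound comes from the rapid decay of $\Phi$, Jensen, and the doubling-type inequality $\sum_Q\nu(Q+j2^{-m})^q=\sum_Q\nu(Q)^q$, which lets the polynomially decaying tail over $j\in\mathbb Z^d$ sum. Consequently
\[
\liminf_m \frac{-\log\|\nu*\Phi_m\|_q^q+md(q-1)\log 2}{(q-1)m\log2}
\]
equals the right-hand side $D$ of the lemma.

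\textbf{Step 2 (high-pass versus low-pass).} For the upper bound on the high-pass pieces, $\Psi_m=\Phi_{m+c}-\Phi_{m-c}$ up to a Schwartz multiplier, so Young gives $\|\nu*\Psi_m\|_q\lesssim\|\nu*\Phi_{m+c}\|_q$. Hence for $S<D$ the series converges, since $2^{m(q-1)(S-d)}2^{md(q-1)}2^{-m(q-1)(D-\epsilon)}$ is summable. For the converse, telescope
\[
\nu*\Phi_M=\nu*\Phi_0+\sum_{m\le M}\nu*\widetilde\Psi_m ,
\]
where $\widetilde\Psi_m$ is a Littlewood--Paley piece dominated by a bounded combination of the $\Psi_{m'}$ with $|m'-m|\le c$ (again by Young, using $\widehat\Psi=1$ on $\supp\widehat\psi$). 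If $\sum_m 2^{m(q-1)(S-d)}\|\nu*\Psi_m\|_q^q<\infty$ with $S<d$, then by Minkowski and H\"older we get
\[
\|\nu*\Phi_M\|_q\lesssim 1+\sum_{m\le M}2^{m(d-S)/q'}\,\epsilon_m \lesssim 2^{M(d-S)/q'},
\]
where $(\epsilon_m)\in\ell^q$. This yields $\|\nu*\Phi_M\|_q^q\lesssim 2^{M(q-1)(d-S)}$, and therefore $D\ge S$ by Step 1. The case $S$ near $0$ and the convention $\sup\{0\}\cup\cdots$ are trivial because $D\ge0$.

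The main obstacle is Step 1's lower bound, specifically the averaging over shifted grids, together with the bookkeeping of the constant scale shifts $c$. I would handle the grids by covering each $Q\in\mathcal D_m$ by a ball of radius $\sqrt d\,2^{-m}$ on which $\Phi_m\gtrsim2^{md}$. Then H\"older on $Q$ gives
\[
\int_Q|\nu*\Phi_m|^q\ge 2^{-md}\Bigl(2^{md}\int_Q\nu*\Phi_m\Bigr)^q ,
\]
and the inner average is $\gtrsim 2^{md}\,\nu(Q)$ after replacing $Q$ by a concentric smaller cube. This costs only a bounded number of grid translates, each of which has a comparable moment.
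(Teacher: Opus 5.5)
\textbf{Gap in Step 1.} The converse direction (deducing $D\ge S$ from your telescoped bound on $\|\nu*\Phi_M\|_q$) rests on the lower bound $\|\nu*\Phi_m\|_q^q\gtrsim 2^{md(q-1)}\sum_{Q\in\mathcal D_m}\nu(Q)^q$, and your justification of that bound does not hold. You ask one kernel $\Phi$ to do two incompatible jobs.

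\begin{itemize}
\item In Step 2 you need $\widehat\Phi$ compactly supported and identically $1$ near the origin. This is what makes $\Phi_m-\Phi_{m-1}$ a genuine annular piece and gives $\Psi_m=\Psi_m*\Phi_{m+c}$.
\item In Step 1 the lower bound uses $\Phi\geq 0$ and $\Phi\gtrsim 1$ on a ball.
\end{itemize}

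No such $\Phi$ exists. If $\widehat\Phi\equiv 1$ near $0$, then $\partial_{\xi_j}^2\widehat\Phi(0)=-4\pi^2\int x_j^2\Phi(x)\,dx=0$, which is impossible for a nonzero nonnegative $\Phi$.

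For a signed kernel the positivity argument breaks down. In your H\"older version, the inner average $2^{md}\int_Q\nu*\Phi_m$ contains contributions $\int_Q\Phi_m(x-y)\,dx$ from points $y\notin Q$. These can be negative and cancel the contribution of $\nu(Q)$. So the claimed inequality is not justified by your argument. The upper bound in Step 1 is fine for signed Schwartz kernels, so your forward direction ($S<D$ implies convergence of the series) is unaffected.

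\textbf{Repair.} Decouple the two roles.

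\begin{itemize}
\item Take an auxiliary kernel $\varphi\geq 0$ with $\varphi\geq 1$ on $B(0,\sqrt d)$ and $\widehat\varphi$ supported in a ball. For example, $\varphi=C|\check g|^2$ for a nonnegative bump $g$ with small support.
\item Then $\nu*\varphi_M(x)\geq 2^{Md}\nu(Q)$ for every $x\in Q\in\mathcal D_M$, directly. No shifted grids or H\"older step are needed; this is exactly the paper's pointwise argument.
\item For $c$ large, $\widehat\Phi(2^{-M-c}\cdot)=1$ on $\operatorname{supp}\widehat\varphi(2^{-M}\cdot)$. Hence $\nu*\varphi_M=(\nu*\Phi_{M+c})*\varphi_M$, and so $2^{Md(q-1)}\sum_{Q\in\mathcal D_M}\nu(Q)^q\leq \|\varphi\|_1^q\,\|\nu*\Phi_{M+c}\|_q^q$.
\item Combined with your telescoping bound at scale $M+c$, this closes the converse.
\end{itemize}

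Alternatively, keep a compactly supported $\varphi$ and bound $\nu*\varphi_M$ directly by $\sum_m\|\nu*\Psi_m\|_q\,\|\psi_m*\varphi_M\|_1$. Here $\|\psi_m*\varphi_M\|_1\lesssim_N 2^{-N(m-M)}$ for $m>M$, and the hypothesis controls the high-frequency tail as well.

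\textbf{Comparison with the paper.} With this fix, your route is a correct and more self-contained alternative. The paper proves the dyadic comparison with a compactly supported nonnegative $\varphi$. It then cites Triebel's equivalence of the local-means and Littlewood--Paley characterizations of $B^{-\alpha}_{q,q}$; because the smoothness is negative, only $\widehat\varphi(0)\neq 0$ is needed there. Your Step 2 instead proves the special case of that equivalence by hand, using band-limited kernels and telescoping.
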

In particular, in this case $\dim_q\nu$, which is usually defined as the RHS of the equation in the Lemma, equals the LHS of the equation.
\begin{proof}
Choose \(\varphi\in C_c^\infty(\mathbb R^d)\), \(\varphi\geq0\),
such that \(\varphi\geq 1\) on \(B(0,\sqrt d)\), and write
\(\varphi_m(x)=2^{md}\varphi(2^m x)\). Then
\begin{equation} \label{eq: one} 
\|\nu*\varphi_m\|_{L^q(\mathbb R^d)}^q
\asymp
2^{md(q-1)}\sum_{Q\in\mathcal D_m}\nu(Q)^q,
\end{equation}
with constants independent of \(m\). Indeed, if \(x,y\in Q\in\mathcal D_m\),
then \(2^m(x-y)\in B(0,\sqrt d)\), whence
$\nu*\varphi_m(x)\geq 2^{md}\nu(Q),\, x\in Q,$
and summing over \(Q\) gives the lower bound in \eqref{eq: one}. 
For the upper bound, suppose that
\(\operatorname{spt}\varphi\subset B(0,R)\). For
\(Q,Q'\in\mathcal D_m\), write \(Q'\sim Q\) if
$Q'\cap\bigl(Q+B(0,R2^{-m})\bigr)\neq\varnothing.$
For every \(Q\), there are \(O_{R,d}(1)\) cubes \(Q'\sim Q\), and
conversely every \(Q'\) is related to \(O_{R,d}(1)\) cubes \(Q\).
If \(x\in Q\), then, putting $C=\lVert \varphi \rVert_{L^\infty (\mathbb{R}^d)}$,
\[
\nu*\varphi_m(x)
\leq
C2^{md}\nu\bigl(B(x,R2^{-m})\bigr)
\leq
C2^{md}\sum_{Q'\sim Q}\nu(Q').
\]
Therefore, by convexity,
\[
\begin{aligned}
\|\nu*\varphi_m\|_{L^q}^q
&\leq
C2^{mdq}\sum_{Q\in\mathcal D_m}|Q|
\left(\sum_{Q'\sim Q}\nu(Q')\right)^q\\
&\leq
C2^{md(q-1)}
\sum_{Q\in\mathcal D_m}\sum_{Q'\sim Q}\nu(Q')^q\\
&\leq
C2^{md(q-1)}
\sum_{Q'\in\mathcal D_m}\nu(Q')^q.
\end{aligned}
\]
This proves \eqref{eq: one}.

Put $D:=\liminf_{m\to\infty}
\frac{-\log \sum_{Q\in\mathcal D_m}\nu(Q)^q }{(q-1)m\log 2}.$ 
The root test shows that
$$\sum_{m\geq1}2^{m(q-1)S} \sum_{Q\in\mathcal D_m}\nu(Q)^q$$
converges whenever \(S<D\) and diverges whenever \(S>D\). Hence
\[
D=
\sup\left(
\{0\}\cup
\left\{
S>0:
\sum_{m\geq1}2^{m(q-1)S}\sum_{Q\in\mathcal D_m}\nu(Q)^q<\infty
\right\}
\right).
\]
It is standard that \(D\leq d\). Combining this with \eqref{eq: one}, we obtain
\begin{equation}
\label{eq: three}
D=
\sup\left(
\{0\}\cup
\left\{
S\in(0,d]:
\sum_{m\geq1}
2^{m(q-1)(S-d)}
\|\nu*\varphi_m\|_{L^q}^q<\infty
\right\}
\right).
\end{equation}

It remains to replace the low-frequency mollifications by annular
Littlewood--Paley pieces. Fix \(S<d\), and set
$\alpha:=\frac{(q-1)(d-S)}{q}>0.$
Then the series in \eqref{eq: three} is
\[
\sum_{m\geq1}
2^{-mq\alpha}\|\nu*\varphi_m\|_{L^q}^q.
\]
Notice that
$\widehat\varphi(0)=\int_{\mathbb R^d}\varphi(x)\,dx>0,$
so \(\widehat\varphi\) is nonzero in a neighbourhood of the origin.
Thus \((\varphi_m)\) is an admissible family of local means for spaces of
negative smoothness. Moreover, the condition
\(\widehat\Psi=1\) on the support of the fixed Littlewood--Paley
annulus implies that \((\Psi_m)\) is an admissible annular family.
The equivalent local-means and Littlewood--Paley characterizations of \(B^{-\alpha}_{q,q}(\mathbb R^d)\) \cite[Sections~2.3.1 and~2.4.6]{triebel1983theory} therefore give
\[
\sum_{m\geq1}
2^{-mq\alpha}\|\nu*\varphi_m\|_{L^q}^q<\infty
\quad\Longleftrightarrow\quad
\sum_{m\geq1}
2^{-mq\alpha}\|\nu*\Psi_m\|_{L^q}^q<\infty.
\]
The omitted fixed low-frequency terms are finite by Young's inequality.
Since
$q\alpha=(q-1)(d-S),$
the latter series is
\[
\sum_{m\geq1}
2^{m(q-1)(S-d)}
\|\nu*\Psi_m\|_{L^q}^q.
\]
Thus the two conditions have the same threshold for every \(S<d\).
Taking the supremum over \(S\in[0,d)\), with the convention above at
dimension zero, proves the claim. The endpoint \(S=d\) does not affect
the value of the supremum.
\end{proof}

We can now finally show that when $G$ is full, $\dim_q^V\nu\geq \dim_q\nu.$
\begin{lemma}
\label{lem:full-orbit-energy-to-relative-lq} Assume that \(G=\operatorname{SO}(d)\)
and let \(1<q\leq 2\) and \(1\leq k<d\), and let
\(\nu\in\mathcal P(\mathbb R^d)\) be compactly supported.
Then, for every \(V\in G_{d,k}\) and every \(S\geq k\),
\[
\mathcal E^V_{q,S}(\nu)
\leq
C_{\psi,q,d,k}
\sum_{m\geq 1}
2^{m(q-1)(S-d)}
\|\nu*\Psi_m\|_{L^q(\mathbb R^d)}^q .
\]
Consequently,
$\dim_q^V\nu\geq \dim_q\nu .$
\end{lemma}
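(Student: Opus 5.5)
The plan is to prove the per-scale inequality
\[
\int_{G_{d,k}}\left\|(\pi_W\nu)*\psi_m^W\right\|_{L^q(W)}^q\,dm_V(W)
\;\lesssim_{\psi,q,d,k}\;
2^{-m(q-1)(d-k)}\,\|\nu*\Psi_m\|_{L^q(\mathbb R^d)}^q
\qquad(m\geq1).
\]
Multiplying by $2^{m(q-1)(S-k)}$ and summing over $m$ then gives the displayed estimate, since $(S-k)-(d-k)=S-d$. The consequence $\dim_q^V\nu\geq\dim_q\nu$ follows from Lemma~\ref{lem:lp-usual-lq-dimension}. Indeed, for $k<S'<\dim_q\nu$ the right-hand series is finite, so $\mathcal E^V_{q,S'}(\nu)<\infty$. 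Since $G=\mathrm{SO}(d)$, we have $\mathcal O_V=G_{d,k}$ and $m_V$ is the Haar measure on the Grassmannian.

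\textbf{Reduction to a linear operator.} Fix a second radial bump $\widetilde\Psi$ with $\widehat{\widetilde\Psi}\in C_c^\infty(\mathbb R^d\setminus\{0\})$ and $\widehat{\widetilde\Psi}=1$ on $\supp\widehat\Psi$, and set $\widetilde\Psi_m:=2^{md}\widetilde\Psi(2^m\cdot)$. For a function $f$ on $\mathbb R^d$, write $\pi_Wf$ for its fiber integral, which is a function on $W$. Then $\widehat{\pi_Wf}=\widehat f|_W$. Define
\[
T_mf(W,x):=\bigl(\pi_W(f*\widetilde\Psi_m)*\psi_m^W\bigr)(x),\qquad x\in W.
\]
This is linear in $f$, with values in functions on the tautological bundle $E=\{(W,x):x\in W\}$, equipped with the measure $dm_V(W)\,d\mathrm{Leb}_W(x)$. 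Compare Fourier transforms at $\xi\in W$. We have $\widehat\psi(2^{-m}\xi)\neq0$ only if $\widehat\Psi(2^{-m}\xi)=1$, and there also $\widehat{\widetilde\Psi}(2^{-m}\xi)=1$. Hence
\[
T_m(\nu*\Psi_m)(W,\cdot)=(\pi_W\nu)*\psi_m^W .
\]
So the per-scale inequality follows once we show $\|T_m\|_{L^q(\mathbb R^d)\to L^q(E)}\lesssim 2^{-m(d-k)/q'}$.

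\textbf{Endpoint bounds and interpolation.} At $q=1$ the bound is immediate. We have $\|\pi_Wg\|_{L^1(W)}\le\|g\|_{L^1}$, and Young's inequality gives $\|T_mf(W,\cdot)\|_{L^1(W)}\le\|\psi\|_1\|\widetilde\Psi\|_1\|f\|_1$. Averaging over $W$ yields a uniform $L^1\to L^1$ bound.

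At $q=2$, Plancherel on $W$ gives
\[
\|T_mf(W,\cdot)\|_2^2\lesssim\int_W|\widehat f(\xi)|^2\,|\widehat{\widetilde\Psi}(2^{-m}\xi)|^2\,d\xi .
\]
The Haar-average over $G_{d,k}$ of $\int_W h(\xi)\,d\xi$ equals $c_{d,k}\int_{\mathbb R^d}h(\xi)|\xi|^{k-d}\,d\xi$; this is polar coordinates, exactly as in the proof of Lemma~\ref{lem:relative-dimension-full-SO}. On $\supp\widehat{\widetilde\Psi}(2^{-m}\cdot)$ we have $|\xi|\asymp2^m$, so $\|T_m\|_{L^2\to L^2(E)}\lesssim2^{-m(d-k)/2}$.

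Riesz--Thorin with $\theta=2/q'$ now gives the norm $2^{-m(d-k)/q'}$. Raising to the $q$-th power yields $2^{-m(d-k)(q-1)}$, as required.

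\textbf{Main obstacle.} The only delicate point is making the interpolation rigorous on the bundle $E$ rather than on a product space. I would handle it by noting that $E$ is a $\sigma$-finite measure space; the measure is the pushforward of $m_G\times\mathrm{Leb}_{\mathbb R^k}$ under $(g,y)\mapsto(gV,gA_Vy)$. Riesz--Thorin requires only $\sigma$-finite measure spaces, so it applies directly. One can equivalently pull back to $G\times\mathbb R^k$, which avoids any local trivialization. Everything else is routine.
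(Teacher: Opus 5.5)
Your proposal is correct and follows the paper's proof essentially step for step. The paper uses the same $L^1$ and $L^2$ endpoint bounds for the operator into the incidence bundle, the same interpolation giving the factor $2^{-m(d-k)(q-1)}$, the same application to $f=\nu*\Psi_m$, and the same summation over $m$. The only cosmetic difference is your extra mollifier $\widetilde\Psi_m$ inside $T_m$. The paper omits it, because $\widehat\psi(2^{-m}\cdot)$ already localizes to $|\xi|\asymp 2^m$ in the $L^2$ bound.
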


\begin{proof}
Since \(G=\operatorname{SO}(d)\), we have
$\mathcal O_V=G_{d,k},$
and \(m_V\) is the normalized invariant measure on \(G_{d,k}\).
For \(m\geq 1\) and \(f\in\mathcal S(\mathbb R^d)\), define
$T_m f(W,x)
=
(\pi_W f)*\psi_m^W(x),
\, W\in G_{d,k},\ x\in W,$  
where
$\pi_Wf(x)
:=
\int_{W^\perp}f(x+y)\,d\operatorname{Leb}_{W^\perp}(y).$
We first prove two endpoint estimates for \(T_m\).

An \(L^1\) estimate follows from Fubini and from the uniform
\(L^1\)-boundedness of the kernels \(\psi_m^W\):
\[
\begin{aligned}
\int_{G_{d,k}}
\|T_m f(W,\cdot)\|_{L^1(W)}\,dm_V(W)
&\leq
\int_{G_{d,k}}
\|\pi_W f\|_{L^1(W)}
\|\psi_m^W\|_{L^1(W)}
\,dm_V(W)  \\
&\leq
C_\psi \|f\|_{L^1(\mathbb R^d)} .
\end{aligned}
\]

For an \(L^2\) estimate, since \(\psi\) is radial, the notation \(\widehat\psi(2^{-m}\xi)\), for \(\xi\in W\), is independent of the choice of an orthogonal identification \(W\simeq\mathbb R^k\). Now, Plancherel on each \(W\) gives
\[
\begin{aligned}
\int_{G_{d,k}}
\|T_m f(W,\cdot)\|_{L^2(W)}^2\,dm_V(W)
&=
c_k
\int_{G_{d,k}}\int_W
|\widehat f(\xi)|^2
|\widehat\psi(2^{-m} \xi)|^2
\,d\xi\,dm_V(W).
\end{aligned}
\]
By the argument as in the proof of Lemma~\ref{lem:relative-dimension-full-SO}, followed by polar coordinates in \(W\) and in \(\mathbb R^d\), we have
\[
\int_{G_{d,k}}\int_W h(\xi)\,d\xi\,dm_V(W)
=
C_{d,k}
\int_{\mathbb R^d} h(\xi)|\xi|^{k-d}\,d\xi ,
\]
and therefore
$\begin{aligned}
\int_{G_{d,k}}
\|T_m f(W,\cdot)\|_{L^2(W)}^2\,dm_V(W)
&=
C_{d,k}
\int_{\mathbb R^d}
|\widehat f(\xi)|^2
|\widehat\psi(2^{-m} \xi )|^2
|\xi|^{k-d}\,d\xi .
\end{aligned}$
Since \(\widehat\psi(2^{-m}\xi )\) is supported where
\(|\xi|\asymp_\psi 2^m\), we get
\[
\int_{G_{d,k}}
\|T_m f(W,\cdot)\|_{L^2(W)}^2\,dm_V(W)
\leq
C_{\psi,d,k}
2^{-m(d-k)}
\|f\|_{L^2(\mathbb R^d)}^2 .
\]

Regard \(T_m\) as an operator into the \(L^p\)-space of the incidence bundle 
$$ \mathcal I_{d,k} := \left\{ (W,x):W\in G_{d,k},\ x\in W \right\},$$
equipped with the measure \(dm_V(W)\,d\operatorname{Leb}_W(x).\) The preceding estimates provide  \(L^1\) and \(L^2\) operator bounds for \( T_m:L^p(\mathbb R^d)\mapsto L^p(\mathcal I_{d,k}). \) Interpolating these \(L^1\) and \(L^2\) estimates gives, for every
\(1<q\leq 2\),
\[
\int_{G_{d,k}}
\|T_m f(W,\cdot)\|_{L^q(W)}^q\,dm_V(W)
\leq
C_{\psi,q,d,k}
2^{-m(d-k)(q-1)}
\|f\|_{L^q(\mathbb R^d)}^q .
\]

We now apply this estimate with
$f=\nu*\Psi_m .$
By the choice of \(\Psi\), for every \(W\in G_{d,k}\),
$(\pi_W\nu)*\psi_m^W
=
(\pi_W(\nu*\Psi_m))*\psi_m^W .$
Indeed, taking Fourier transforms on \(W\), both sides have transform
$\xi\mapsto
\mathcal F_\xi(\nu)\widehat\psi(2^{-m}\xi),
\, \xi\in W,$
because
$\widehat\Psi(2^{-m}\xi)=1
\text{ on the support of }
\widehat\psi(2^{-m}\xi).$
Therefore,
\[
\int_{G_{d,k}}
\left\|(\pi_W\nu)*\psi_m^W\right\|_{L^q(W)}^q
\,dm_V(W)
\leq
C_{\psi,q,d,k}
2^{-m(d-k)(q-1)}
\|\nu*\Psi_m\|_{L^q(\mathbb R^d)}^q .
\]
Multiplying by \(2^{m(q-1)(S-k)}\) and summing over \(m\geq 1\), we obtain
\[
\begin{aligned}
\mathcal E^V_{q,S}(\nu)
&\leq
C_{\psi,q,d,k}
\sum_{m\geq 1}
2^{m(q-1)(S-k)}
2^{-m(d-k)(q-1)}
\|\nu*\Psi_m\|_{L^q(\mathbb R^d)}^q  \\
&=
C_{\psi,q,d,k}
\sum_{m\geq 1}
2^{m(q-1)(S-d)}
\|\nu*\Psi_m\|_{L^q(\mathbb R^d)}^q .
\end{aligned}
\]
This proves the quantitative estimate.

Now let \(S<\dim_q\nu\). If \(S\leq k\), there is nothing to prove. If
\(S>k\), then for every \(k<S'<S\), the preceding estimate gives
$\mathcal E^V_{q,S'}(\nu)<\infty .$
Hence \(S\leq \dim_q^V\nu\). Taking the supremum over
\(S<\dim_q\nu\) gives
$\dim_q^V\nu\geq \dim_q\nu.$
\end{proof}
\section{Proof of Theorem~\ref{thm:main}}
\label{Section:proof-main}
\noindent{\textbf{Recalling the notations, hypotheses,  and goal}} We now prove Theorem~\ref{thm:main}.  We use the notation introduced in
Sections~\ref{Section model}-\ref{subsec:besov-orbit-estimates}.  Thus
$\Phi=\{f_i(x)=r_iO_i x+t_i\}_{i=1}^n$ 
is a self-similar IFS on \(\mathbb R^d\), \(d\geq3\), with probability vector
\(\mathbf p=(p_1,\ldots,p_n)\), and \(\nu=\nu_{\mathbf p}\) is the corresponding
self-similar measure.  Fix \(1\leq k<d\) and \(V\in G_{d,k}\).  Let
\[
G:=\overline{\langle O_1,\ldots,O_n\rangle}\subseteq SO(d),
\qquad
\mathcal O_V:=G\cdot V,
\]
and let \(m_V\) be the normalized \(G\)-invariant probability measure on
\(\mathcal O_V\).

Recall that
$\{a^{(1)},\ldots,a^{(m)}\}
=
\{|r_i|:1\leq i\leq n\}$
is the set of distinct contraction moduli, and that
$\mathcal I_j:=\{i: |r_i|=a^{(j)}\},
\,
\widetilde p_j:=\sum_{i\in\mathcal I_j}p_i.$
We write
\[
\lambda_j:=-\log a^{(j)},
\qquad
\lambda_{\max}:=\max_{1\leq j\leq m}\lambda_j
=
\max_i(-\log |r_i|).
\]
For each \(j\), define
$\rho_j
:=
\sum_{i\in\mathcal I_j}
\frac{p_i}{\widetilde p_j}\delta_{O_i}
\in\mathcal P(G).$
Thus, \(P_{\rho_j,V}\) is the operator \(P_{a^{(j)},V}\) in the statement of
Theorem~\ref{thm:main}.

Set
\[
\Theta_{V}:=-\log \left( \sum_{a\in\mathcal A}
\widetilde p_a
\left\|P_{a,V}\right\|_{L^2_0(\mathcal O_V,\,m_V)} \right).
\]
Let \(1<q\leq2\), and write
\(q'=q/(q-1)\). Suppose that
$k<b<S<\dim_q^V\nu.$
Assume 
\[
1<q<2
\,\text{ and }\quad
\frac12\dim\mathcal O_V<\sigma<q, \text{ or } 
q=2
\quad\text{and}\quad
\frac12\dim\mathcal O_V<\sigma.
\]

The main hypothesis of Theorem~\ref{thm:main} is that
\begin{equation}
\label{eq:theta-main-hypothesis}
\Theta_{V}
>
\left(
b(q-1)+
\frac{b(\sigma+b(q-1))}{S-b}
\right)\lambda_{\max}.
\end{equation}
We aim to show that \(\pi_V\nu\ll\operatorname{Leb}_V\), and its density belongs to
$B^{(b-k)/q'}_{q,q}(V),$ which was defined in Section \ref{subsec:besov-orbit-estimates}.

\medskip

\noindent{ \textbf{Parameter choice and first reduction}}
We may choose \(\alpha>0\) such that
\begin{equation}
\label{eq:alpha-main-term-condition}
\alpha(S-b)>b\lambda_{\max}
\end{equation}
and
\begin{equation}
\label{eq:alpha-error-condition}
b(q-1)\lambda_{\max}
+
(\sigma+b(q-1))\alpha
<
\Theta_{V}.
\end{equation}
Indeed, \eqref{eq:theta-main-hypothesis} is precisely the condition that the following
interval is non-empty:
\[
\left(
\frac{b\lambda_{\max}}{S-b},
\,
\frac{\Theta_{V}-b(q-1)\lambda_{\max}}
{\sigma+b(q-1)}
\right).
\]

Let
$\beta:=\frac{b-k}{q'}=\frac{(b-k)(q-1)}{q}.$
By Lemma~\ref{lem:besov-reconstruction-measure}, it is enough to prove
\begin{equation}
\label{eq:need-besov-sum-main}
\sum_{N\geq1}
2^{Nq\beta}
\left\|
(\pi_V\nu)*\psi_N^V
\right\|_{L^q(V)}^q
<\infty .
\end{equation}
Since \(q\beta=(q-1)(b-k)\), the weight in
\eqref{eq:need-besov-sum-main} can be written as
$2^{N(q-1)(b-k)}.$
The contribution of bounded \(N\)'s is finite by Young's inequality, so it suffices to consider only large \(N\).

\medskip

\noindent{ \textbf{A scale decomposition}} We decompose the high frequency scales into bounded length blocks.  For
\(n\geq1\), put
$L_n:=n\lambda_{\max}$
and define
\[
\mathcal N_n
:=
\left\{
N\in\mathbb N:
e^{L_n+\alpha n}\leq 2^N<e^{L_{n+1}+\alpha(n+1)}
\right\}.
\]
Equivalently,
$\mathcal N_n
=
\mathbb N\cap
\left[
\frac{n(\lambda_{\max}+\alpha)}{\log 2},
\frac{(n+1)(\lambda_{\max}+\alpha)}{\log 2}
\right).$
Hence the sets \(\mathcal N_n\), \(n\geq1\), are pairwise disjoint and
$\bigcup_{n\geq1}\mathcal N_n
=
\left\{
N\in\mathbb N:
2^N\geq e^{\lambda_{\max}+\alpha}
\right\}.$ 
Moreover, for every \(n\geq1\),
\begin{equation}
\label{eq:block-cardinality-bound}
\#\mathcal N_n
\leq
\left\lceil
\frac{\lambda_{\max}+\alpha}{\log 2}
\right\rceil .
\end{equation}
For \(N\in\mathcal N_n\), let
\(m=m(N,n)\) be the integer determined by
$2^m\leq 2^N e^{-L_n}<2^{m+1}.$
Then
\begin{equation}
\label{eq:residual-scale-comparison}
2^m\asymp_{\alpha,\lambda_{\max}} e^{\alpha n}.
\end{equation}
\medskip

\noindent{\textbf{A first application of Jensen}} We now use the model decomposition from Lemma~\ref{lem:stopping-model}
at the scale \(L=L_n\). Recalling \eqref{eq: omega n}, for \(\eta\in\Omega_{L_n}\), write
$\mu_\eta
:=
\sum_{u\in X^{(\eta)}_{|\eta|}}
p^{(\eta)}(u)\,f_u\nu.$
Thus Lemma~\ref{lem:stopping-model} gives
$\nu
=
\int_{\Omega_{L_n}}\mu_\eta\,dQ(\eta),$
where, as usual, the integral over \(\Omega_{L_n}\) denotes the finite sum
with weights \(Q([\eta])\).  Since projection and convolution are linear, for all $N$,
\[
(\pi_V\nu)*\psi_N^V
=
\int_{\Omega_{L_n}}
\sum_{u\in X^{(\eta)}_{|\eta|}}
p^{(\eta)}(u)\,
(\pi_V f_u\nu)*\psi_N^V
\,dQ(\eta).
\]
Applying Jensen's inequality, first with respect to \(\eta\) and then with
respect to \(u\), we obtain
\begin{equation} \label{eq:first jensen}
\begin{aligned}
\left\|
(\pi_V\nu)*\psi_N^V
\right\|_{L^q(V)}^q
&\leq
\int_{\Omega_{L_n}}
\left\|
\sum_{u\in X^{(\eta)}_{|\eta|}}
p^{(\eta)}(u)\,
(\pi_V f_u\nu)*\psi_N^V
\right\|_{L^q(V)}^q
\,dQ(\eta)
\\
&\leq
\int_{\Omega_{L_n}}
\sum_{u\in X^{(\eta)}_{|\eta|}}
p^{(\eta)}(u)
\left\|
(\pi_V f_u\nu)*\psi_N^V
\right\|_{L^q(V)}^q
\,dQ(\eta).
\end{aligned}
\end{equation}
\medskip

\noindent{\textbf{Reducing to the main term and error term}}
Writing \(f_u(x)=r_uO_ux+t_u\), by \eqref{eq: eps eta L relative},
for every \(u\in X^{(\eta)}_{|\eta|}\),
$|r_u|
=
e^{-L_n-\varepsilon_{\eta,L_n}},$
where
\begin{equation}
\label{eq:overshoot-main-proof}
0\leq \varepsilon_{\eta,L_n}\leq\lambda_{\max}.
\end{equation}
We next rewrite the terms
\(\|(\pi_V f_u\nu)*\psi_N^V\|_{L^q(V)}^q\) in coordinates adapted to the
subspace \(O_u^{-1}V\).  For every
\(x\in\mathbb R^d\),
\[
O_u^{-1}\pi_V f_u(x)
=
r_u\pi_{O_u^{-1}V}x+O_u^{-1}\pi_Vt_u .
\]
The map \(O_u^{-1}:V\to O_u^{-1}V\) is an isometry, and therefore preserves
\(L^q\)-norms.  Moreover, by the equivariant choice of the
Littlewood-Paley kernels,
$(O_u^{-1})_*\psi_N^V=\psi_N^{O_u^{-1}V}.$
It follows that, up to a translation, which does not affect the
\(L^q\)-norm,
\[
\left\|
(\pi_V f_u\nu)*\psi_N^V
\right\|_{L^q(V)}^q
=
\left\|
\bigl((x\mapsto r_u x)_*\pi_{O_u^{-1}V}\nu\bigr)
*
\psi_N^{O_u^{-1}V}
\right\|_{L^q(O_u^{-1}V)}^q .
\]
The dilation \(x\mapsto r_u x\) contributes the factor
\(|r_u|^{-k(q-1)}\) to the \(L^q\)-norm and moves the Littlewood--Paley
frequency scale from \(2^N\) to the scale \(2^N|r_u|\).

  Put
$W:=O_u^{-1}V
\text{ and }
s_{u,N}:=2^N|r_u|$.
Let \(K_{u,N}^W\) be the kernel on \(W\) defined by
$$\widehat{K_{u,N}^W}(\xi)
=
\widehat\psi(s_{u,N}^{-1}\xi),
\, \xi\in W.$$
Since \(\psi\) is radial, a change of variables under the similarity
\(x\mapsto r_uO_ux+t_u\) gives
\begin{equation}
\label{eq:rescaled-lp-kernel}
\left\|
(\pi_V f_u\nu)*\psi_N^V
\right\|_{L^q(V)}^q
=
|r_u|^{-k(q-1)}
\left\|
(\pi_W\nu)*K_{u,N}^W
\right\|_{L^q(W)}^q.
\end{equation}
Moreover, by the definition of \(m=m(N,n)\) as in \eqref{eq:residual-scale-comparison} and the bound
\(0\leq\varepsilon_{u,L_n}\leq\lambda_{\max}\),
$e^{-\lambda_{\max}}2^m
\leq
s_{u,N}
<
2^{m+1}.$
Consequently, the  support of \( \widehat{K_{u,N}^W}\) meets only those
Littlewood-Paley annuli whose indices differ from \(m\) by a uniformly
bounded constant.

Thus there exists \(J_\Phi\geq1\), depending only on
\(\lambda_{\max}\), such that, for all sufficiently large \(n\),
$K_{u,N}^W
=
\sum_{|j|\leq J_\Phi}
K_{u,N}^W*\psi_{m+j}^W.$
The finitely many excluded values of \(n\) contribute only a bounded
amount. Since
$\|K_{u,N}^W\|_{L^1(W)}
=
\|\psi\|_{L^1(\mathbb R^k)}$
uniformly in \(u,N\), Young's inequality and
$(\sum_{j=1}^M a_j)^q\leq M^{q-1}\sum_{j=1}^M a_j^q$
give
\[
\left\|
(\pi_W\nu)*K_{u,N}^W
\right\|_{L^q(W)}^q
\leq
C_{\Phi,\psi,q}
\sum_{|j|\leq J_\Phi}
\left\|
(\pi_W\nu)*\psi_{m+j}^W
\right\|_{L^q(W)}^q.
\]
Combining this with \eqref{eq:rescaled-lp-kernel}, we obtain
\[
\left\|
(\pi_V f_u\nu)*\psi_N^V
\right\|_{L^q(V)}^q
\leq
C_{\Phi,\psi,q}
|r_u|^{-k(q-1)}
\sum_{|j|\leq J_\Phi}
\left\|
(\pi_{O_u^{-1}V}\nu)*\psi_{m+j}^{O_u^{-1}V}
\right\|_{L^q(O_u^{-1}V)}^q .
\]

Since \(|r_u|^{-1}\leq e^{L_n+\lambda_{\max}}\), the factor
\(e^{k(q-1)\lambda_{\max}}\) may be absorbed into the constant.  Plugging the
preceding estimate into \eqref{eq:first jensen}, we obtain
\begin{equation}
\label{eq:rescaled-cylinder-bound}
\begin{aligned}
\left\|
(\pi_V\nu)*\psi_N^V
\right\|_{L^q(V)}^q
&\leq
C_{\Phi,\psi,q}
e^{k(q-1)L_n}
\sum_{|j|\leq J_\Phi}
\int_{\Omega_{L_n}}
\sum_{u\in X^{(\eta)}_{|\eta|}}
p^{(\eta)}(u)
\\
&\qquad\qquad\qquad\cdot
\left\|
(\pi_{O_u^{-1}V}\nu)*\psi_{m+j}^{O_u^{-1}V}
\right\|_{L^q(O_u^{-1}V)}^q
\,dQ(\eta).
\end{aligned}
\end{equation}
Since \(2^{m(N,n)}\asymp e^{\alpha n}\), there is \(n_0\) such that
\(m(N,n)+j\geq1\) for every \(n\geq n_0\), \(N\in\mathcal N_n\), and
\(|j|\leq J_\Phi\). The contribution of \(n<n_0\) involves only finitely many
dyadic scales \(N\), and is finite by Young's inequality. Hence, in what follows
we assume \(n\geq n_0\).

We now apply Corollary~\ref{cor:besov-stopped-cylinder-mixing}, with
\(\mu=\nu\) and \(W=V\), to the inner sum in
\eqref{eq:rescaled-cylinder-bound}.  Let \(R_0\) be such that
\(\operatorname{spt}\nu\subset B(0,R_0)\).  For each fixed \(j\), put
\(\ell=m+j\).  Then
\[
\begin{aligned}
&\sum_{u\in X^{(\eta)}_{|\eta|}}
p^{(\eta)}(u)
\left\|
(\pi_{O_u^{-1}V}\nu)*\psi_{\ell}^{O_u^{-1}V}
\right\|_{L^q(O_u^{-1}V)}^q
\\
&\qquad\leq
\int_{\mathcal O_V}
\left\|
(\pi_U\nu)*\psi_{\ell}^{U}
\right\|_{L^q(U)}^q
\,dm_V(U)
\\
&\qquad\quad+
C_{\psi,q,\sigma,V,R_0}
2^{\ell(\sigma+k(q-1))}
\prod_{a=1}^{|\eta|}
\left\|
P_{\rho_{\eta_a},V}
\right\|_{L^2_0(\mathcal O_V,\,m_V)} .
\end{aligned}
\]
Inserting this estimate into \eqref{eq:rescaled-cylinder-bound}
therefore splits the Besov sum \eqref{eq:need-besov-sum-main} into a main term (with the spherical average), 
and an error term (that deals with the spectral norms). 
\medskip

\noindent{\textbf{The main term}}
We first consider the contribution of the orbit-average term. Recall that  by \eqref{eq:residual-scale-comparison},
$2^{m(N,n)}\geq \frac12 e^{\alpha n}$
uniformly in \(N\in\mathcal N_n\).  Hence there exists \(n_0\), depending only
on \(\alpha\) and \(J_\Phi\), such that
$m(N,n)+j\geq1
\text{ for all }n\geq n_0,\ N\in\mathcal N_n,\ |j|\leq J_\Phi.$
The contribution of the finitely many indices \(n<n_0\) is finite and may be
absorbed into the constant.  Thus, in estimating the main term, we also assume
\(m(N,n)+j\geq1\) for all \(|j|\leq J_\Phi\). Thus, the main term  is bounded by
\[
\begin{aligned}
\mathcal M
&:=
\sum_{n\geq1}
\sum_{N\in\mathcal N_n}
2^{N(q-1)(b-k)}
e^{k(q-1)L_n}
\sum_{|j|\leq J_\Phi}
\int_{\mathcal O_V}
\left\|
(\pi_U\nu)*\psi_{m(N,n)+j}^{U}
\right\|_{L^q(U)}^q
\,dm_V(U).
\end{aligned}
\]
For \(N\in\mathcal N_n\), we have
\(2^N\asymp e^{L_n+\alpha n}\), and by
\eqref{eq:residual-scale-comparison},
\(2^{m(N,n)}\asymp e^{\alpha n}\).  Since \(L_n=n\lambda_{\max}\), it follows
that
\[
2^{N(q-1)(b-k)}
e^{k(q-1)L_n}
\asymp
\exp\left(
(q-1)n\bigl(b\lambda_{\max}+\alpha(b-k)\bigr)
\right).
\]
On the other hand, since \(j\) ranges over a fixed finite set,
\[
2^{(m(N,n)+j)(q-1)(S-k)}
\asymp
\exp\left(
\alpha n(q-1)(S-k)
\right),
\]
with constants independent of \(n,N\), and \(j\).  Therefore
\[
\begin{aligned}
&
2^{N(q-1)(b-k)}
e^{k(q-1)L_n}
\int_{\mathcal O_V}
\left\|
(\pi_U\nu)*\psi_{m(N,n)+j}^{U}
\right\|_{L^q(U)}^q
\,dm_V(U)
\\
&\qquad\lesssim
\exp\left(
(q-1)n\bigl(b\lambda_{\max}-\alpha(S-b)\bigr)
\right)
2^{(m(N,n)+j)(q-1)(S-k)}
\\
&\qquad\qquad\cdot
\int_{\mathcal O_V}
\left\|
(\pi_U\nu)*\psi_{m(N,n)+j}^{U}
\right\|_{L^q(U)}^q
\,dm_V(U).
\end{aligned}
\]
By \eqref{eq:alpha-main-term-condition}, the exponential factor is uniformly
bounded in \(n\).  Since the sets \(\mathcal N_n\) have uniformly bounded
cardinality and \(j\) ranges over a fixed finite set, each integer
\(\ell\geq1\) can occur as \(\ell=m(N,n)+j\) only boundedly many times (this also uses \eqref{eq:residual-scale-comparison}).
Consequently,
\[
\begin{aligned}
\mathcal M
&\lesssim
\sum_{\ell\geq1}
2^{\ell(q-1)(S-k)}
\int_{\mathcal O_V}
\left\|
(\pi_U\nu)*\psi_\ell^{U}
\right\|_{L^q(U)}^q
\,dm_V(U)
<\infty,
\end{aligned}
\]
where the final inequality follows from the standing assumption \(S<\dim_q^V\nu\).

\medskip

\noindent{\textbf{The error term}}
It remains to estimate the contribution of the mixing error.  For
\(1\leq i\leq m\), put
$B_i
:=
\left\|
P_{\rho_i,V}
\right\|_{L^2_0(\mathcal O_V,\,m_V)}.$ 
Each \(P_{\rho_i,V}\) is an average of pullbacks by isometries of
\(\mathcal O_V\), and hence \(B_i\leq1\).  Moreover, by the definition of
\(\Theta_{V}\),
$\sum_{i=1}^m \widetilde p_i B_i
=
\exp(-\Theta_{V}).$

Since \(L_n=n\lambda_{\max}\)
and \(\lambda_i\leq\lambda_{\max}\) for every \(i\), every
\(\eta\in\Omega_{L_n}\) has \(|\eta|\geq n\).  Therefore, using also
\(0\leq B_i\leq1\),
$\prod_{\ell=1}^{|\eta|}
B_{\eta_\ell}
\leq
\prod_{\ell=1}^{n}
B_{\eta_\ell}.$
Thus, if \(\omega\in\Omega\) is an infinite sequence whose stopped
prefix at level \(L_n\) is \(\eta\), then the right-hand side is the product
over the first \(n\) symbols of \(\omega\).  Hence, by independence of the
coordinates under \(Q\),
\begin{equation}
\label{eq:product-expectation-main-proof}
\begin{aligned}
\int_{\Omega_{L_n}}
\prod_{\ell=1}^{|\eta|}
B_{\eta_\ell}
\,dQ(\eta)
&\leq
\int_{\Omega}
\prod_{\ell=1}^{n}
B_{\omega_\ell}
\,dQ(\omega)
\\
&=
\left(
\sum_{i=1}^m \widetilde p_i B_i
\right)^n
=
e^{-n\Theta_{V}} .
\end{aligned}
\end{equation}

We now return to the error term coming from
\eqref{eq:rescaled-cylinder-bound}.  Using
\eqref{eq:product-expectation-main-proof}, this contribution is bounded, up to
a multiplicative constant, by
\[
\begin{aligned}
\mathcal R
&:=
\sum_{n\geq1}
\sum_{N\in\mathcal N_n}
2^{N(q-1)(b-k)}
e^{k(q-1)L_n}
\sum_{|h|\leq J_\Phi}
2^{(m(N,n)+h)(\sigma+k(q-1))}
e^{-n\Theta_{V}}.
\end{aligned}
\]
We estimate this exactly as in the main term.  For \(N\in\mathcal N_n\),
we have \(2^N\asymp e^{L_n+\alpha n}\), and by
\eqref{eq:residual-scale-comparison},
\(2^{m(N,n)}\asymp e^{\alpha n}\).  Since \(L_n=n\lambda_{\max}\), and since
\(\mathcal N_n\) and the range \(|h|\leq J_\Phi\) have uniformly bounded
cardinalities, it follows that
\[
\begin{aligned}
\mathcal R
&\lesssim
\sum_{n\geq1}
\exp\left(
(q-1)n\bigl(b\lambda_{\max}+\alpha(b-k)\bigr)
\right)
\exp\left(
\alpha n(\sigma+k(q-1))
\right)
e^{-n\Theta_{V}}
\\
&=
\sum_{n\geq1}
\exp\left(
n\left[
b(q-1)\lambda_{\max}
+
(\sigma+b(q-1))\alpha
-
\Theta_{V}
\right]
\right).
\end{aligned}
\]
By \eqref{eq:alpha-error-condition}, the exponent is negative.  Thus
\(\mathcal R<\infty\), completing the estimate of the error term.

\medskip

Combining the estimates for \(\mathcal M\) and \(\mathcal R\) proves
\[
\sum_{N\geq1}
2^{N(q-1)(b-k)}
\left\|
(\pi_V\nu)*\psi_N^V
\right\|_{L^q(V)}^q
<\infty .
\]
Since \(q\beta=(q-1)(b-k)\), this is exactly
$\sum_{N\geq1}
2^{Nq\beta}
\left\|
(\pi_V\nu)*\psi_N^V
\right\|_{L^q(V)}^q
<\infty,
\,
\beta=\frac{b-k}{q'}.$
Lemma~\ref{lem:besov-reconstruction-measure} now implies that
\(\pi_V\nu\ll\operatorname{Leb}_V\), and that its density belongs to
\(B^\beta_{q,q}(V)=B^{(b-k)/q'}_{q,q}(V)\).  In particular, the density belongs
to \(L^q(V)\).  This completes the proof of Theorem~\ref{thm:main}.

\section{Remaining proofs} \label{Section proof of coro}
\subsection{Proof of Corollary~\ref{cor:explicit-singular-smooth-projections}} \label{subsection: cor explicit singular smooth}
Recall that 
$$\Phi:=
\left\{
f_{q,b}(x)=
18^{-2/5}O_{q,b}x+q+2b\mathbf{1}
\right\}_{
 q\in\{0,1\}^{3}:1\leq \lVert q \rVert_1 \leq 2, \, b\in\{-1,0,1\} }$$ where
$\mathbf{1}:=(1,1,1),$ and
$(q,b)\mapsto O_{q,b},$
is 
 some bijection onto the Ramanujan set
\(\mathcal{R}_{17}\subset \operatorname{SO}(3)\).
Let \(\nu \in \mathcal{P}(\mathbb{R}^3)\) be the uniform self-similar measure with respect to $\Phi$. 

We show that $\dim\nu=\frac{5}{2}$; but,
nevertheless, for every \(V\in G_{3,1}\),
$\pi_{V}\nu\ll \operatorname{Leb}_{V}.$

Let us recall some arithmetic properties of Ramanujan sets. Define 
\begin{equation} \label{eq: omega p}
\Omega_p
:=
\left\{
\frac{1}{\sqrt p}(a,b,c,d)\in S^3:
a,b,c,d\in\mathbb Z,\ 
a^2+b^2+c^2+d^2=p,\ 
a>0 \text{ odd},\ 
b,c,d \text{ even}
\right\}.
\end{equation}
This is the standard set of \(SU(2)\)-representatives whose image under
the adjoint map \(SU(2)\to SO(3)\) is the  Ramanujan set
\(\mathcal R_p\). 
For
$\omega \in \Omega_{17}$, put 
$$\omega'=\frac{1}{\sqrt{17}}(a+bi+cj+dk), \text{ with } a^2+b^2+c^2+d^2=17.$$
We identify \(\mathbb R^3\) with the space of purely imaginary quaternions
\[
\operatorname{Im}\mathbb H
=
\{x_1 i+x_2 j+x_3 k:\ x_1,x_2,x_3\in\mathbb R\}.
\]
For a unit quaternion \(\omega'\), let
\[
O_{\omega'}=\operatorname{Ad}_{\omega'}:
\operatorname{Im}\mathbb H\to\operatorname{Im}\mathbb H,
\qquad
\operatorname{Ad}_{\omega'}(x)=\omega' x(\omega')^{-1}.
\]
With respect to the basis \((i,j,k)\) of \(\operatorname{Im}\mathbb H\), this rotation is given by
\[
O_{\omega'}
=
\frac{1}{17}
\begin{pmatrix}
a^2+b^2-c^2-d^2 & 2(bc-ad) & 2(bd+ac)\\
2(bc+ad) & a^2-b^2+c^2-d^2 & 2(cd-ab)\\
2(bd-ac) & 2(cd+ab) & a^2-b^2-c^2+d^2
\end{pmatrix}.
\]
For the standard LPS representatives, $a$ is odd and $b,c,d$
are even. Since $17\equiv 1\pmod 2$, the displayed formula gives
\begin{equation}\label{eq: properties of LPS}
O_{\omega'}\equiv I\mod 2,\, \text{ and } O_{\omega'} ^TO_{\omega'}=I \mod 3 \text{ so } O_{\omega'} \mod 3 \text{ is orthogonal}.
\end{equation}
Indeed, for the second identity, \(O_{\omega'}^TO_{\omega'}=I\) over \(\mathbb Q\), and \(17\) is
invertible modulo \(3\).

\begin{proof}
Put
$$\beta:=18^{1/5},
\,
r:=\beta^{-2}=18^{-2/5},
\,
t_{q,b}:=q+2b\mathbf{1}.$$
We first show that \(\Phi\) has no exact overlaps. Observe that the map
$$(q,b)\mapsto
\left(t_{q,b}\bmod 2,\ |t_{q,b}|^2\bmod 3\right)$$
is injective. Indeed,
$t_{q,b}=q+2b\mathbf 1\equiv q\pmod 2,$
so the first component determines \(q\). Writing
\(m:=|q|_1\in\{1,2\}\), we also have
$$|t_{q,b}|^2
=m+4bm+12b^2
\equiv m(1+b)\pmod 3.$$
Since \(m\not\equiv0\pmod3\) and \(1+b\) takes the three distinct
values \(0,1,2\), the second component then determines \(b\).

Now suppose that \(f_u=f_v\). Equality of the contraction ratios gives
\(n:=|u|=|v|\). Comparing the translation parts and multiplying by
\(\beta^{2(n-1)}\), we obtain
\[
\sum_{k=1}^{n}
\beta^{2(n-k)}
O_{u_1}\cdots O_{u_{k-1}}t_{u_k}
=
\sum_{k=1}^{n}
\beta^{2(n-k)}
O_{v_1}\cdots O_{v_{k-1}}t_{v_k}.
\]
After this multiplication all coefficients belong to
\(\mathbb Z[1/17,\beta]\). Reduce it modulo the ideal \((2,\beta)\). All terms with \(k<n\) vanish, while every product of rotations is congruent to \(I\) modulo \(2\), by \eqref{eq: properties of LPS}. Hence $t_{u_n}\equiv t_{v_n}\pmod 2.$ Next reduce the same identity modulo \((3,\beta)\). Again only the terms with \(k=n\) remain. Since the reductions modulo \(3\) of the products of rotations are orthogonal, we obtain $|t_{u_n}|^2\equiv |t_{v_n}|^2\pmod 3.$ By the injectivity observation above, \(u_n=v_n\). Cancelling the common final map and proceeding inductively, we conclude that \(u=v\). Therefore \(\Phi\) has no exact overlaps.

The IFS \(\Phi\) is algebraic. Moreover, since \(\mathcal R_{17}\) generates a dense subgroup of \(\operatorname{SO}(3)\), the orthogonal parts of \(\Phi\) act linearly irreducibly on \(\mathbb R^3\). Since \(\Phi\) has no exact overlaps, \cite[Corollary~1.7]{hochman2015Rd} gives \[
\dim\nu
=
\min\left\{
3,\frac{\log 18}{-\log r}
\right\}
=
\frac{5}{2}.
\]

By \eqref{eq: ramanujan}
$\left\|P\right\|_{L^{2}_{0}(\operatorname{SO}(3))}
=
\frac{2\sqrt{17}}{18}.$ Furthermore, $\log\frac{18}{2\sqrt{17}} -\frac{4}{15}\log18 >0.$ Applying
Theorem~\ref{thm: baby case} Part (1)  with \(k=1\), therefore yields
$\pi_{V}\nu\ll\operatorname{Leb}_{V}$
for every \(V\in G_{3,1}\).
\end{proof}

\subsection{Proof of Corollary~\ref{cor:singular-ssc-all-line-projections}}
\label{subsec:proof-singular-ssc-all-line-projections}

Recall the definition of $\Omega_p$ from \eqref{eq: omega p} In particular \(|\Omega_p|=p+1\).
Writing \(\omega=\frac{1}{\sqrt p}(a,b,c,d) \in \Omega_p \), let \(A_\omega\) denote left multiplication by \(\omega\) on
\(\mathbb H\simeq\mathbb R^4\). Thus,
\begin{equation} \label{eq: A omega}
A_\omega
= \frac{1}{\sqrt p}
\begin{pmatrix}
a & -b & -c & -d\\
b & a & -d & c\\
c & d & a & -b\\
d & -c & b & a
\end{pmatrix}
\in \mathrm{SO}(4).
\end{equation}
Enumerate \(\Omega_p=\{\omega_1,\ldots,\omega_{p+1}\}\), and let
\(M=\lceil (p+1)^{1/4}\rceil\). Write 
$$\text{ for } 1\leq j\leq p+1,\, \, j-1=n_1(j)+M n_2(j)+M^2 n_3(j)+M^3 n_4(j),
\, 0\leq n_i(j)\leq M-1,$$ and
put
$t_j=(n_1(j),n_2(j),n_3(j),n_4(j)).$ For $D>1$ define
\[
\Phi_{p,D}
=
\left\{
x\mapsto (p+1)^{-1/D}\cdot A_{\omega_j}x+t_j
\right\}_{j=1}^{p+1}.
\]

We show that for every  $\frac{3+\sqrt{21}}{2}<D<4$  and  prime
\(p\equiv 1 \text{ mod } 4\) such that  
$$p>
\max\left\{
5^{\frac{4D}{4-D}},\,
2^{\frac{4D(D-1)}{D^2-3D-3}}
\right\},$$ 
the uniform self-similar measure $\nu \in \mathcal{P}(\mathbb{R}^4)$ with respect to $\Phi_{p,D}$ satisfies: 
$$\dim_H\nu=\dim_2\nu=D<4, \text{ and for every } V\in G_{4,1},\, \pi_V\nu\ll\operatorname{Leb}_V, \text{ with }
\pi_V\nu\in L^2(V).$$

\begin{proof}
Fix
$\frac{3+\sqrt{21}}{2}<D<4.$ 
Choose \(\sigma>3/2\) sufficiently close to \(3/2\) so that
\begin{equation}
\label{eq:ssc-R4-parameter}
\frac{D+\sigma}{D(D-1)}<\frac{1}{2}.
\end{equation}

Let \(p\equiv 1 \pmod 4\) be a prime.
Recall that the  Ramanujan set
\(\mathcal R_p\subseteq \operatorname{SO}(3)\) has \(p+1\) elements and
that the associated normalized averaging operator satisfies
\[
\left\|P_p\right\|_{L^2_0(\operatorname{SO}(3))}
=
\frac{2\sqrt p}{p+1}.
\]

Set \(r:=(p+1)^{-1/D}\). It is direct to check that
the first lower bound on \(p\) in the statement, namely
$p>5^{\frac{4D}{4-D}},$
ensures that the IFS $\Phi:= \Phi_{p,D}$
has the SSC. Indeed, write $K=K_\Phi$. Since every coordinate of \(t_j\) lies in \(\{0,\ldots,M-1\}\), we have
$|t_j|\leq 2(M-1),
\text{ and hence }
\sup_{x\in K}|x|
\leq
\frac{2(M-1)}{1-r}.$
Moreover, the vectors \(t_j\) are distinct integer vectors, so
\(|t_i-t_j|\geq1\) whenever \(i\neq j\). Therefore,
\[
\begin{aligned}
\operatorname{dist}\bigl(f_i(K),f_j(K)\bigr)
&=
\inf_{x,y\in K}
\left|
(t_i-t_j)+rA_i x-rA_j y
\right| \\
&\geq
|t_i-t_j|
-r\sup_{x\in K}|x|
-r\sup_{y\in K}|y| \\
&\geq
1-\frac{4r(M-1)}{1-r}.
\end{aligned}
\]
Now, writing $N=p+1$,
$$ M-1<N^{1/4} \text{ and so } 
4r(M-1)+r
<
4N^{-\frac{4-D}{4D}}+N^{-1/D}
<
5N^{-\frac{4-D}{4D}}.$$
The assumption
$p>5^{\frac{4D}{4-D}}$
implies that the last expression is smaller than \(1\). Hence
$$4r(M-1)<1-r,$$ so the images \(f_i(K)\) are pairwise disjoint.
Thus \(\Phi\) satisfies the SSC.

Let \(\nu\) be the self-similar measure associated to \(\Phi\) and the uniform
probability vector. By the strong separation condition,
\[
\dim_H\nu=\dim_2\nu
=
\frac{\log N}{-\log r}
=
D.
\]
In particular, since \(D<4\), we have
$\nu\perp\operatorname{Leb}_{\mathbb R^4}.$ 

We next verify the relevant hypotheses of Theorem~\ref{thm:main} for every
\(V\in G_{4,1}\). Let
$G
:=
\overline{
\left\langle
A_\omega: \omega \in\Omega_p
\right\rangle
}
\subseteq\operatorname{SO}(4).$
Since the image of \(\Omega_p\) in
\(\operatorname{SO}(3)\) has spectral gap, it generates a dense subgroup
of \(\operatorname{SO}(3)\). It follows that
$G=\{L_q:q\in\operatorname{SU}(2)\},$ where we identify \(\operatorname{SU}(2)\) with the group of unit
quaternions, and where \(L_q:\mathbb H\to\mathbb H\) denotes left
multiplication, \(L_q(x)=qx\).   
This group acts transitively on \(G_{4,1}\). Indeed, if \(u,v\in\mathbb H\)
are unit quaternions, then
$L_{vu^{-1}}(\mathbb Ru)=\mathbb Rv.$ 
Thus, for every \(V\in G_{4,1}\),
$\mathcal O_V=G_{4,1}$ and
$\dim\mathcal O_V=3.$

Fix \(V\in G_{4,1}\), and choose a unit quaternion \(u\in\mathbb H\) such that
\(V=\mathbb Ru\).  The map
$\Theta_u:SU(2)/\{\pm1\}\to G_{4,1},
\,
\Theta_u([q])=\mathbb R(qu),$
is a well-defined diffeomorphism.  Under the standard identification
\(SU(2)/\{\pm1\}\simeq SO(3)\), given by \([q]\mapsto \operatorname{Ad}_q\),
this identifies \(G_{4,1}\) with \(SO(3)\), and identifies the invariant
measure \(m_V\) with Haar measure on \(SO(3)\). Moreover, for \(\omega\in\Omega_p\),
\[
L_\omega^{-1}\Theta_u([q])
=
\mathbb R(\omega^{-1}qu)
=
\Theta_u([\omega^{-1}q]).
\]
Thus, if \(F\in L^2(G_{4,1},m_V)\) and
$H:=F\circ\Theta_u\), then
$$(P_VF)\circ\Theta_u([q])
=
\frac1{p+1}\sum_{\omega\in\Omega_p}
H([\omega^{-1}q]).$$
Under the quotient map \(SU(2)\to SU(2)/\{\pm1\}\simeq SO(3)\), the elements
\([\omega]\) are precisely \(\operatorname{Ad}_\omega\in\mathcal R_p\).
Therefore the operator \(P_V\) is unitarily conjugate to the  averaging
operator
\[
P_pH(g)=\frac1{p+1}\sum_{O\in\mathcal R_p}H(O^{-1}g)
\]
on \(L^2(SO(3))\). Hence
\[
\|P_V\|_{L^2_0(\mathcal O_V,m_V)}
=
\|P_p\|_{L^2_0(SO(3))}
=
\frac{2\sqrt p}{p+1}.
\]

Since \(\mathcal O_V=G_{4,1}\), the relative spherical average defining
\(\dim_2^V\nu\) is the ordinary spherical average on \(S^3\). Therefore, by an argument similar to Lemma \ref{lem:relative-dimension-full-SO},
$\dim_2^V\nu=\dim_2\nu=D
\text{ for every }V\in G_{4,1}.$ 

We now apply Theorem~\ref{thm: pre main} with \(k=s=1\). It is enough to verify
that
\[
-\log\left(\frac{2\sqrt p}{p+1}\right)
>
\frac{D+\sigma}{D-1}\cdot(-\log r).
\]
Since
$-\log r=\frac{\log(p+1)}{D},$
this is equivalent to
\[
\frac{-\log\left(2\sqrt p/(p+1)\right)}{\log(p+1)}
>
\frac{D+\sigma}{D(D-1)}.
\]
The left-hand side tends to \(1/2\) as \(p\rightarrow\infty\),
while the right-hand side is strictly smaller than \(1/2\) by
\eqref{eq:ssc-R4-parameter}. Therefore the required inequality holds
for all sufficiently large primes \(p\equiv1\pmod4\).
Theorem~\ref{thm: pre main} now gives
\[
\pi_V\nu\ll\operatorname{Leb}_V,
\qquad
\pi_V\nu\in L^2(V)
\qquad\text{for every }V\in G_{4,1}.
\]
This completes the proof. \end{proof}

A few more comments are in order. First, about the lower bound on \(p\) in the statement of
the corollary. The first condition
$p>5^{\frac{4D}{4-D}}$
was already used above to guarantee the SSC. The second condition
$p>
2^{\frac{4D(D-1)}{D^2-3D-3}}
$
is used to verify the spectral inequality . Indeed, since
$\frac{D+\frac32}{D(D-1)}<\frac12$
is equivalent to \(D^2-3D-3>0\), this bound gives enough room to choose
\(\sigma>3/2\) sufficiently close to \(3/2\) so that
$\frac{-\log\left(2\sqrt p/(p+1)\right)}{\log(p+1)}
>
\frac{D+\sigma}{D(D-1)}.$
Thus the explicit hypothesis on \(p\) in the statement implies both the strong
separation condition and the spectral condition required by
Theorem~\ref{thm:main}.

Finally, let \(0\leq t<1/30\), and put
$s:=1+2t.$
Then
$s<\frac{16}{15}.$
Since
\[
\lim_{\substack{D\uparrow4\\ \sigma\downarrow3/2}}
\frac{s(D+\sigma)}{D(D-s)}
=
\frac{11s}{8(4-s)}
<
\frac{1}{2},
\]
we may choose \(D<4\) sufficiently close to \(4\) and
\(\sigma>3/2\) sufficiently close to \(3/2\) so that
\[
\frac{s(D+\sigma)}{D(D-s)}<\frac{1}{2}.
\]
Repeating the above construction and applying Theorem~\ref{thm: pre main} with this value of \(s\), we obtain a singular
uniformly contracting self-similar measure satisfying the strong
separation condition such that
\[
\pi_V\nu\in H^{(s-1)/2}(V)=H^t(V)
\qquad\text{for every }V\in G_{4,1}.
\]

\subsection{Proof of Corollary \ref{cor:main}}
\label{subsec:proof-explicit-examples}
We first prove a uniform spherical-average contraction estimate for the
self-similar measures appearing in Corollary~\ref{cor:main}. This estimate
will also be used later in the
proof of Corollary~\ref{cor:explicit-ambient-ac}.

Let \(p\equiv1\pmod 4\) be prime, and let
\(\mathcal R_p\subseteq \operatorname{SO}(3)\) be the corresponding
Ramanujan set.  Thus \(|\mathcal R_p|=p+1\). In this section we denote the averaging operator
$$T_pF(g)
=
\frac1{p+1}\sum_{O\in\mathcal R_p}F(O^{-1}g),
        \, F\in L^2(\operatorname{SO}(3)),$$ which
satisfies, by \eqref{eq: ramanujan} and Lemma \ref{lem:ambient-gap-controls-orbit-sobolev},
$$\|T_p\|_{L^2_0(\operatorname{SO}(3))}
\leq 
\frac{2\sqrt p}{p+1}.$$
We write
$\rho_p:=\frac{2\sqrt p}{p+1}.$

For \(0<r<1\), recall that
\[
\Phi_r
=
\left\{
f_{O,b,c}(x,y)=\bigl(rOx+be_1,\, ry+c\bigr)
\right\}_{O\in\mathcal R_5,\ b,c\in\{0,1\}} .
\]   For \(0<\varepsilon<1/10\), define
\[
w_{00}=\frac12-\frac{\varepsilon}{4},\qquad
w_{01}=\frac{\varepsilon}{4},\qquad
w_{10}=\frac12-\frac{3\varepsilon}{4},\qquad
w_{11}=\frac{3\varepsilon}{4}.
\]
Let \(\mu_{r,\varepsilon}\in\mathcal P(\mathbb R^4)\) be the self-similar
measure with respect to \(\Phi_r\) and the probability vector
$p_{O,b,c}:=\frac{w_{bc}}{6}, O\in R_5,\ b,c\in\{0,1\}.$ Note that here \[
G=
\left\{
\begin{pmatrix}
O&0\\
0&1
\end{pmatrix}:O\in \operatorname{SO}(3)
\right\}
\simeq \operatorname{SO}(3).
\]

\begin{lemma}
\label{lem:mask-estimate} Fix \(0<r<1\) and $0<\varepsilon<\frac{1}{10}$,  and let
\(\mu_{r,\varepsilon}\) be the self-similar measure  above. Identifying \(\mathbb R^4=\mathbb R^3\times\mathbb R\),
for \(0<c\leq1\), \(\varrho\in[c,1]\), \(z\in\mathbb R\), and \(R>0\), 
define
$$H_R^{\varrho,z}(\omega) := \mathcal F_{R(\varrho\omega,z)} \bigl(\mu_{r,\varepsilon}\bigr), \, \omega\in S^2.$$
Then, for every \(\vartheta>
\kappa_5
:=
\frac{19+\sqrt{91}}{36}\), there exists
\(R_0=R_0(c,\vartheta)\) such that for every \(R\geq R_0\), every \(\varrho\in[c,1]\), and every
\(z\in\mathbb R\),
\[
\int_{S^2}
\left|H_R^{\varrho,z}(\omega)\right|^2
\,d\sigma(\omega)
\leq
\vartheta
\int_{S^2}
\left|H_{rR}^{\varrho,z}(\omega)\right|^2
\,d\sigma(\omega)
\]
\end{lemma}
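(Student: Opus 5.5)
The plan is to use one step of self-similarity to write $H_R^{\varrho,z}$ as a trigonometric mask times the Ramanujan average of $H_{rR}^{\varrho,z}$. I will then split off the mean of the averaged function, and bound the resulting quadratic form by its largest eigenvalue, which should be exactly $\kappa_5$.

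\textbf{Step 1 (the self-similarity identity).} Write $\mu=\mu_{r,\varepsilon}$ and $\xi=R(\varrho\omega,z)$. Using $\mu=\sum p_{O,b,c}f_{O,b,c}\mu$, the translation $(be_1,c)$ contributes the phase $e^{-2\pi iR(\varrho b\omega_1+zc)}$. The linear part sends $\xi$ to $rR(\varrho O^{-1}\omega,z)$, since the last coordinate is not rotated. Because the weights factor as $w_{bc}/6$, this gives
\[
H_R^{\varrho,z}(\omega)=m(\omega)\,\bigl(T_5h\bigr)(\omega),\qquad h:=H_{rR}^{\varrho,z},
\]
where $T_5h(\omega)=\frac16\sum_{O\in\mathcal R_5}h(O^{-1}\omega)$ and
\[
m(\omega)=A+e^{-2\pi iR\varrho\omega_1}B,
\qquad
A=w_{00}+w_{01}e^{-2\pi iRz},
\quad
B=w_{10}+w_{11}e^{-2\pi iRz}.
\]
We lift $h$ to $\operatorname{SO}(3)$ via $g\mapsto h(ge_1)$, which preserves $L^2$ norms. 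Lemma~\ref{lem:ambient-gap-controls-orbit-sobolev} then gives $\|T_5\|_{L^2_0(S^2)}\leq\rho_5$, with $\rho_5^2=\frac{20}{36}=\frac59$.

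\textbf{Step 2 (splitting off the mean).} Write $h=c_0+h_0$ with $c_0=\int h\,d\sigma$, so that $T_5h=c_0+g$ with $g=T_5h_0$, $\int g\,d\sigma=0$ and $\|g\|_2\leq\frac{\sqrt5}{3}\|h_0\|_2$. Put $M:=\int|m|^2d\sigma$. Expanding $\int|m|^2|c_0+g|^2\,d\sigma$ gives three terms:
\begin{itemize}
\item $|c_0|^2M$;
\item the cross term $2\operatorname{Re}\,c_0\int(|m|^2-M)\bar g\,d\sigma$, where subtracting $M$ is allowed because $g$ has mean zero, so it is bounded by $2|c_0|\,\bigl\||m|^2-M\bigr\|_2\,\frac{\sqrt5}{3}\|h_0\|_2$;
\item $\int|m|^2|g|^2\,d\sigma\leq\sup|m|^2\cdot\frac59\|h_0\|^2\leq\frac59\|h_0\|^2$, since $|A|+|B|\leq1$.
\end{itemize}

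\textbf{Step 3 (the asymptotics of the mask).} Using $\int_{S^2}e^{-2\pi it\omega_1}d\sigma=\frac{\sin 2\pi t}{2\pi t}$, which tends to $0$ uniformly for $t=jR\varrho\geq jRc$ with $j=1,2$ as $R\to\infty$, we get
\[
M=|A|^2+|B|^2+o(1)\leq\tfrac12+o(1).
\]
Similarly, $\bigl\||m|^2-M\bigr\|_2^2=2|A|^2|B|^2+o(1)\leq\frac18+o(1)$, because $|A|,|B|\leq\frac12$. All the $o(1)$ terms are uniform in $z$ and in $\varrho\in[c,1]$.

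\textbf{Step 4 (the quadratic form).} Together these give
\[
\int|H_R|^2\,d\sigma\leq(x,y)\begin{pmatrix}\tfrac12+o(1)&\beta\\ \beta&\tfrac59\end{pmatrix}(x,y)^T,
\]
with $x=|c_0|$, $y=\|h_0\|_2$ and $\beta^2\leq\frac59\cdot\frac18+o(1)=\frac5{72}+o(1)$. The limiting matrix has largest eigenvalue
\[
\frac{\frac{19}{18}+\sqrt{\frac1{324}+\frac{20}{72}}}{2}=\frac{19+\sqrt{91}}{36}=\kappa_5.
\]
Since $x^2+y^2=\int|h|^2d\sigma$, any $\vartheta>\kappa_5$ works once $R\geq R_0(c,\vartheta)$.

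The main obstacle is the cross term. If it is bounded crudely, without using that $g$ has mean zero and without the variance bound $\bigl\||m|^2-M\bigr\|_2^2\leq\frac18$, the constant exceeds $1$. Getting these uniform oscillatory-integral asymptotics right, including the $\varrho\geq c$ uniformity, is the delicate point.
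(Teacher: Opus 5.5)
Your proposal is correct and follows essentially the same route as the paper: the one-step identity $H_R^{\varrho,z}=m\cdot T_5H_{rR}^{\varrho,z}$, splitting off the mean, using that $T_5$ of the mean-zero part has mean zero to center $|m|^2$ in the cross term, the uniform oscillatory asymptotics giving the bounds $\frac12$ and $\frac{1}{2\sqrt2}$, and the largest eigenvalue $\kappa_5$ of the limiting $2\times2$ form. Your lift $g\mapsto h(ge_1)$, which justifies $\|T_5\|_{L^2_0(S^2)}\le\rho_5$, and your remark that the frequency $2R\varrho$ also appears in the variance computation make explicit two steps that the paper leaves implicit.
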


\begin{proof}
For \(u\in\mathbb R\), put
$$a_0(u)
:=
w_{00}+w_{01}e^{-2\pi iu},
\,
a_1(u)
:=
w_{10}+w_{11}e^{-2\pi iu}.$$
By the definition of the weights,
$|a_0(u)|
\leq
w_{00}+w_{01}
=
\frac12,
\qquad
|a_1(u)|
\leq
w_{10}+w_{11}
=
\frac12.$

Let \(T_5\) denote the averaging operator induced by
\(\mathcal R_5\) on \(L^2(S^2)\). It is direct to check that, by self-similarity,
\[
H_R^{\varrho,z}
=
m_R^{\varrho,z}\cdot T_5H_{rR}^{\varrho,z},
\]
where
$m_R^{\varrho,z}(\omega)
=
a_0(Rz)
+
a_1(Rz)e^{-2\pi iR\varrho\omega_1}.$
Recall that
$\left\|T_5\right\|_{L^2_0(S^2)}
\leq
\rho_5
:=
\frac{\sqrt5}{3}.$

We first record two estimates for the multiplier, uniform in
\(\varrho\in[c,1]\) and \(z\in\mathbb R\). For every \(\delta>0\),
if \(R\) is sufficiently large, then
\begin{equation}
\label{eq:uniform-mask-mean}
\int_{S^2}
\left|m_R^{\varrho,z}\right|^2
\,d\sigma
\leq
\frac12+\delta,
\end{equation}
and
\begin{equation}
\label{eq:uniform-mask-centered}
\left\|
\left|m_R^{\varrho,z}\right|^2
-
\int_{S^2}
\left|m_R^{\varrho,z}\right|^2
\,d\sigma
\right\|_{L^2(S^2)}
\leq
\frac{1}{2\sqrt2}+\delta.
\end{equation}
Indeed, writing
$a
:=
a_0(Rz)\overline{a_1(Rz)},$
we have
\[
\left|m_R^{\varrho,z}(\omega)\right|^2
=
|a_0(Rz)|^2+|a_1(Rz)|^2
+
2\operatorname{Re}
\left(
ae^{2\pi iR\varrho\omega_1}
\right),
\]
where
$|a_0(Rz)|^2+|a_1(Rz)|^2
\leq
\frac12,
\,
|a|
\leq
\frac14.$
Since
\[
\int_{S^2}
e^{2\pi it\omega_1}
\,d\sigma(\omega)
=
\frac{\sin(2\pi t)}{2\pi t},
\]
the oscillatory terms tend to zero uniformly for
\(t=R\varrho\geq cR\). This proves
\eqref{eq:uniform-mask-mean}. Moreover,
\[
\int_{S^2}
\left|
2\operatorname{Re}
\left(
ae^{2\pi it\omega_1}
\right)
\right|^2
\,d\sigma(\omega)
=
2|a|^2+o_{t\to\infty}(1)
\leq
\frac18+o_{t\to\infty}(1),
\]
uniformly for \(t\geq cR\). Since subtracting the mean can only decrease
the \(L^2\)-norm, this proves
\eqref{eq:uniform-mask-centered}.

Now decompose
\[
H_{rR}^{\varrho,z}
=
c_R^{\varrho,z}
+
\varphi_R^{\varrho,z},
\qquad
c_R^{\varrho,z}
:=
\int_{S^2}
H_{rR}^{\varrho,z}
\,d\sigma,
\qquad
\int_{S^2}
\varphi_R^{\varrho,z}
\,d\sigma
=
0,
\]
and put
$g_R^{\varrho,z}
:=
T_5\varphi_R^{\varrho,z}.$
Then
\[
\int_{S^2}
g_R^{\varrho,z}
\,d\sigma
=
0,
\qquad
\left\|g_R^{\varrho,z}\right\|_2
\leq
\rho_5
\left\|\varphi_R^{\varrho,z}\right\|_2.
\]
Also,
\[
\left\|m_R^{\varrho,z}\right\|_\infty
\leq
|a_0(Rz)|+|a_1(Rz)|
\leq
1.
\]

Expanding the square, we obtain
\[
\begin{aligned}
\int_{S^2}
\left|H_R^{\varrho,z}\right|^2
\,d\sigma
&=
M_R^{\varrho,z}
\left|c_R^{\varrho,z}\right|^2
\\
&\quad+
2\operatorname{Re}
\left(
\overline{c_R^{\varrho,z}}
\int_{S^2}
\left|m_R^{\varrho,z}\right|^2
g_R^{\varrho,z}
\,d\sigma
\right)
\\
&\quad+
\int_{S^2}
\left|m_R^{\varrho,z}\right|^2
\left|g_R^{\varrho,z}\right|^2
\,d\sigma,
\end{aligned}
\]
where
$M_R^{\varrho,z}
:=
\int_{S^2}
\left|m_R^{\varrho,z}\right|^2
\,d\sigma.$
Since
$\int_{S^2}
g_R^{\varrho,z}
\,d\sigma
=
0,$
we have
\[
\int_{S^2}
\left|m_R^{\varrho,z}\right|^2
g_R^{\varrho,z}
\,d\sigma
=
\int_{S^2}
\left(
\left|m_R^{\varrho,z}\right|^2
-
M_R^{\varrho,z}
\right)
g_R^{\varrho,z}
\,d\sigma.
\]
Therefore, by Cauchy--Schwarz,
\eqref{eq:uniform-mask-mean},
\eqref{eq:uniform-mask-centered}, and the spectral bound for \(T_5\),
for all sufficiently large \(R\),
\[
\begin{aligned}
\int_{S^2}
\left|H_R^{\varrho,z}\right|^2
\,d\sigma
&\leq
\left(\frac12+\delta\right)
\left|c_R^{\varrho,z}\right|^2
\\
&\quad+
2\left(\frac{1}{2\sqrt2}+\delta\right)
\rho_5
\left|c_R^{\varrho,z}\right|
\left\|\varphi_R^{\varrho,z}\right\|_2
\\
&\quad+
\rho_5^2
\left\|\varphi_R^{\varrho,z}\right\|_2^2.
\end{aligned}
\]

As \(\delta\downarrow0\), the corresponding quadratic form converges
to the one represented by
\[
\begin{pmatrix}
\frac12 & \frac{\rho_5}{2\sqrt2}\\
\frac{\rho_5}{2\sqrt2} & \rho_5^2
\end{pmatrix}
=
\begin{pmatrix}
\frac12 & \frac{\sqrt{10}}{12}\\
\frac{\sqrt{10}}{12} & \frac59
\end{pmatrix}.
\]
Its largest eigenvalue is
$\kappa_5
=
\frac{19+\sqrt{91}}{36}.$

Since
$\left|c_R^{\varrho,z}\right|^2
+
\left\|\varphi_R^{\varrho,z}\right\|_2^2
=
\int_{S^2}
\left|H_{rR}^{\varrho,z}\right|^2
\,d\sigma,$
it follows that, for every \(\vartheta>\kappa_5\), there exists
\(R_0=R_0(c,\vartheta)\) such that
\[
\int_{S^2}
\left|H_R^{\varrho,z}(\omega)\right|^2
\,d\sigma(\omega)
\leq
\vartheta
\int_{S^2}
\left|H_{rR}^{\varrho,z}(\omega)\right|^2
\,d\sigma(\omega)
\]
whenever \(R\geq R_0\), uniformly in
\(\varrho\in[c,1]\) and \(z\in\mathbb R\).
\end{proof}
\medskip

\noindent{\textbf{Proof of Corollary~\ref{cor:main}.}}
Fix \(\eta>0\) and \(\tau>0\). Put
$\tau_0:=\min\{\tau,1/4\},$ and recall that
$\rho_5:=\frac{\sqrt5}{3}.$ 
Choose \(\sigma_0>3/2\). We choose \(0<r<1\) so that
\begin{equation}
\label{eq:choice-r-coupled-dim}
\frac{-\log \left( \frac{19+\sqrt{91}}{36} \right)}{-\log r}>4
\end{equation}
and
\begin{equation}
\label{eq:choice-r-coupled-spectral}
-\log\rho_5
>
\frac{(4-2\tau_0)(4+\sigma_0)}{2\tau_0}
\cdot(-\log r).
\end{equation}
Both conditions hold when \(r\) is sufficiently close to \(1\).

Let \(0<\varepsilon<1/10\), to be chosen below, and let
\(\mu_{r,\varepsilon}\in\mathcal P(\mathbb R^4)\) be the corresponding
self-similar measure. We first choose \(\varepsilon\) so that the ambient Fourier
dimension of \(\mu_{r,\varepsilon}\) is smaller than \(\eta\). Recall that
$P_4:\mathbb R^4=\mathbb R^3\times\mathbb R\to\mathbb R$
denotes the fourth-coordinate projection, and put
\[
\lambda_\varepsilon:=(P_4)_*\mu_{r,\varepsilon}.
\]
By the definition of the weights,
$w_{00}+w_{10}=1-\varepsilon,
\,
w_{01}+w_{11}=\varepsilon.$
Therefore \(\lambda_\varepsilon\) is the self-similar measure for the IFS
$ \lbrace y\mapsto ry,
\qquad
y\mapsto ry+1 \rbrace$
with probability vector \((1-\varepsilon,\varepsilon)\). It is thus well known that we have the upper bound
\[
\dim\lambda_\varepsilon
\leq
\frac{
-(1-\varepsilon)\log(1-\varepsilon)
-\varepsilon\log\varepsilon
}{
-\log r
}.
\]
Since the right-hand side tends to \(0\) as \(\varepsilon\downarrow0\), we may
choose \(0<\varepsilon<1/10\) such that
\begin{equation}
\label{eq:thin-vertical-choice}
\frac{
-(1-\varepsilon)\log(1-\varepsilon)
-\varepsilon\log\varepsilon
}{
-\log r
}
<
\min\{1,\eta\}.
\end{equation}
In particular,
$\lambda_\varepsilon\perp\operatorname{Leb}_{\mathbb R},$ and so $\mu_{r,\varepsilon}\perp\operatorname{Leb}_{\mathbb R^4}.$ Thus, given the structure of the IFS and the measure, it is direct to check that
$$\dim_F\mu_{r,\varepsilon}
\leq
\dim_F\lambda_\varepsilon
\leq
\dim\lambda_\varepsilon
<
\eta.$$

We now prove the positive projection statements. Let
\(1\leq k\leq3\), and let \(W\in G_{4,k}\) satisfy \(e_4\notin W\).
Write
$
P_3:\mathbb R^4=\mathbb R^3\times\mathbb R\to\mathbb R^3,
\,
P_4:\mathbb R^4\to\mathbb R$
for the coordinate projections. Since \(P_3|_W\) is injective,
\[
c_W
:=
\min_{\theta\in S(W)}
|P_3\theta|
>
0.
\]
For \(\theta\in S(W)\), put
$\varrho_\theta
:=
|P_3\theta|,
\,
z_\theta
:=
P_4\theta.$
Averaging over the \(G\)-orbit of \(W\), and using the rotational
invariance of surface measure on \(S^2\), gives
\[
\mathcal S_{\mu_{r,\varepsilon},W}(R)
=
\int_{S(W)}
\int_{S^2}
\left|
H_R^{\varrho_\theta,z_\theta}(\omega)
\right|^2
\,d\sigma(\omega)
\,d\sigma_W(\theta).
\]

Let
$0<\beta<
\frac{-\log\kappa_5}{-\log r}.$
Choose \(\vartheta>\kappa_5\) sufficiently close to \(\kappa_5\) that
$\beta
<
\frac{-\log\vartheta}{-\log r}.$
Since
$\varrho_\theta\in[c_W,1],
\,
z_\theta\in[-1,1],$
Iterating Lemma~\ref{lem:mask-estimate}, uniformly for
\(\varrho\in[c_W,1]\) and \(z\in[-1,1]\), and then integrating in
\(\theta\), gives
\[
\mathcal S_{\mu_{r,\varepsilon},W}(R)
\lesssim_{\beta,W}
R^{-\beta},
\qquad
R\geq1.
\]
By \eqref{eq:choice-r-coupled-dim}, we may choose \(\beta>4\).
It follows directly from the definition of relative \(L^2\)-dimension
that
\begin{equation}
\label{eq:full-relative-dimension-coupled}
\dim_2^W\mu_{r,\varepsilon}
=
4.
\end{equation}

It remains to verify the spectral hypothesis of Theorem~\ref{thm: pre main}.
Write
\[
\widetilde O
:=
\begin{pmatrix}
O&0\\
0&1
\end{pmatrix},
\qquad O\in\mathcal R_5.
\]
All similarities defining \(\mu_{r,\varepsilon}\) have the same contraction
ratio \(r\). Moreover, since
$\sum_{b,c}w_{bc}=1,$
the induced rotational Markov operator (on any Sobolev space) for any orbit \(\mathcal O_W\) is
$P_WF(U)
=
\frac16\sum_{O\in\mathcal R_5}
F(\widetilde O^{-1}U).$
By \eqref{eq: ramanujan} and
Lemma~\ref{lem:ambient-gap-controls-orbit-sobolev},
$\left\|P_W\right\|_{L^2_0(\mathcal O_W)}
\leq
\rho_5.$
Also, for every \(1\leq k\leq3\) and every \(W\in G_{4,k}\) with
\(e_4\notin W\), the orbit \(\mathcal O_W\) has dimension at most \(3\).
Thus our fixed choice \(\sigma_0>3/2\) is admissible for all such \(W\).

Let \(1\leq k\leq3\), let \(W\in G_{4,k}\) satisfy \(e_4\notin W\), and let
\(s\) satisfy
$k\leq s<4-2\tau_0.$
Using \eqref{eq:full-relative-dimension-coupled}, the spectral condition in
Theorem~\ref{thm: pre main} is implied by
\[
-\log\rho_5
>
\frac{s(4+\sigma_0)}{4-s}\cdot(-\log r).
\]
Since the function
$s\mapsto \frac{s(4+\sigma_0)}{4-s}$
is increasing on \([0,4)\), this follows from
\eqref{eq:choice-r-coupled-spectral}. Therefore Theorem~\ref{thm: pre main} gives
\[
\pi_W\mu_{r,\varepsilon}\ll\operatorname{Leb}_W
\text{ and, for every } k\leq s<4-2\tau_0,\,
\pi_W\mu_{r,\varepsilon}\in H^{(s-k)/2}(W).
\]
Equivalently,
$\pi_W\mu_{r,\varepsilon}\in H^t(W)
\text{ for every }0\leq t<\frac{4-k}{2}-\tau_0.$
Since \(\tau_0\leq\tau\), this implies in particular that
\[
\pi_W\mu_{r,\varepsilon}\in H^t(W)
\qquad
\text{for every }0\leq t<\frac{4-k}{2}-\tau.
\]

It remains to verify the converse assertion. Let
\(1\leq k\leq3\) and let \(W\in G_{4,k}\) satisfy \(e_4\in W\). Then
\[
P_4 \circ\pi_W\mu_{r,\varepsilon}
=
P_4 \mu_{r,\varepsilon}
=
\lambda_\varepsilon.
\]
If \(\pi_W\mu_{r,\varepsilon}\) were absolutely continuous with respect to
\(\operatorname{Leb}_W\), then its one-dimensional projection onto
\(\operatorname{span}(e_4)\) would be absolutely continuous with respect to
Lebesgue measure on that line. This contradicts the singularity of
\(\lambda_\varepsilon\). Thus
\[
\pi_W\mu_{r,\varepsilon}\ll\operatorname{Leb}_W
\quad\Longleftrightarrow\quad
e_4\notin W.
\]
This completes the proof.
\subsection{Proof of Claim \ref{Claim: fourier}}
The proof of Claim \ref{Claim: fourier} is a rather direct consequence of the argument proving Theorem \ref{thm:main}.
\label{subsec:proof-fourier-dimension}
\begin{proof} Write
$ \lambda_{\max}:=\max_i(-\log |r_i|).$
It is enough to prove the asserted estimate for \(|\xi|\) sufficiently large,
since \(|\mathcal F_\xi(\nu)|\leq1\).

Fix \(\xi\in\mathbb R^d\) with \(R:=|\xi|\geq2\), and write
$V_\xi:=\operatorname{span}\{\xi\}\in G_{d,1}.$
We will apply Claim~\ref{claim:stopped-cylinder-mixing} with the ambient group
\(\operatorname{SO}(d)\) and the orbit
$\operatorname{SO}(d)\cdot V_\xi=G_{d,1}.$
Since \(\dim G_{d,1}=d-1\), our assumption
\(\sigma>\frac{d-1}{2}\) is precisely the  condition needed
there.

Let \(0<\theta<1\), to be chosen below, and put
$n:=\lfloor \theta\log R\rfloor.$
For \(R\) sufficiently large, \(n\geq1\). Applying Lemma \ref{lem:stopping-model} at scale \(L=n\), Jensen's inequality, Claim~\ref{claim:stopped-cylinder-mixing},
and Lemma~\ref{lem:ambient-gap-controls-orbit-sobolev}, gives
\[
\begin{aligned}
|\mathcal F_\xi(\nu)|^2
&\leq
\int_{\Omega_n}
\mathcal S_{\nu}(e^{-n-\varepsilon_{\eta,n}}R)\,dQ(\eta)
\\
&\quad+
C_{\nu,\sigma}
\left(1+e^{-n}R\right)^\sigma
\int_{\Omega_n}
\prod_{\ell=1}^{|\eta|}
\left\|
P_{\eta_\ell}^G
\right\|_{L^2_0(\operatorname{SO}(d))\to L^2_0(\operatorname{SO}(d))}
\,dQ(\eta),
\end{aligned}
\]
where
$\mathcal S_\nu(T)
:=
\int_{S^{d-1}}
|\mathcal F_{T\omega}(\nu)|^2\,d\sigma(\omega).$
Here \(P_{\eta_\ell}^G\) denotes \(P_a^G\) for the contraction modulus
\(a=a^{(\eta_\ell)}\).

We estimate the two terms separately. First, by the assumed spherical-average
decay and the overshoot bound \(0\leq\varepsilon_{\eta,n}\leq\lambda_{\max}\),
\[
\mathcal S_\nu(e^{-n-\varepsilon_{\eta,n}}R)
\lesssim
(e^{-n-\varepsilon_{\eta,n}}R)^{-\beta}
\lesssim_{\Phi}
e^{\beta n}R^{-\beta}.
\]
Since \(n=\lfloor\theta\log R\rfloor\), this gives
$\int_{\Omega_n}
\mathcal S_{\nu}(e^{-n-\varepsilon_{\eta,n}}R)\,dQ(\eta)
\lesssim_{\Phi}
R^{-\beta(1-\theta)}.$

For the error term, set
$B_a:=
\left\|
P_a^G
\right\|_{L^2_0(\operatorname{SO}(d))\to L^2_0(\operatorname{SO}(d))},
\,
B:=\sum_{a\in\mathcal A}\widetilde p_a B_a.$
Thus \(B=e^{-\Lambda}\). As in the proof of Theorem~\ref{thm:main}, the 
expectation satisfies
\[
\int_{\Omega_n}
\prod_{\ell=1}^{|\eta|}
B_{a^{(\eta_\ell)}}\,dQ(\eta)
\lesssim_\Phi
\exp\left(-\frac{\Lambda}{\lambda_{\max}}\,n\right)
=
e^{-\Gamma n}.
\]
Also,
$1+e^{-n}R\lesssim R^{1-\theta}.$
Therefore the error term is bounded by
\[
C_{\nu,\sigma,\Phi}
R^{\sigma(1-\theta)}e^{-\Gamma n}
\lesssim
R^{\sigma(1-\theta)-\Gamma\theta}.
\]
Combining the two estimates, we obtain
\[
|\mathcal F_\xi(\nu)|^2
\lesssim
R^{-\beta(1-\theta)}
+
R^{\sigma(1-\theta)-\Gamma\theta}.
\]

Now let
$0<\delta<
\frac{\beta\Gamma}{\beta+\sigma+\Gamma}.$ 
Choose \(\theta\in(0,1)\) such that
\[
\delta<
\min\left\{
\beta(1-\theta),\,
\Gamma\theta-\sigma(1-\theta)
\right\}.
\]
This is possible because the maximum of the right-hand minimum is attained at
$\theta=\frac{\beta+\sigma}{\beta+\sigma+\Gamma},$
and equals
$\frac{\beta\Gamma}{\beta+\sigma+\Gamma}.$
For this choice of \(\theta\), the preceding estimate gives
\[
|\mathcal F_\xi(\nu)|^2
\lesssim_\delta
|\xi|^{-\delta},
\qquad |\xi|\geq1.
\]
The  lower bound for
\(\dim_F\nu\) now follows immediately from the definition of Fourier dimension.
\end{proof}
\subsection{Proof of Corollary~\ref{cor:rough-all-line-projections}}
\label{subsec:proof-rough-all-line-projections}
 
Recall that for $0<r,q<1$  we
let $\nu_{r,q}\in \mathcal P(\mathbb R^3)$ be the self-similar measure
associated to the IFS
$$\Phi_{r} = \lbrace
f_{O,0}(x)=rOx, 
f_{O,1}(x)=rOx+e_1
\rbrace_{O\in\mathcal R_5}$$
and the probability vector
$$p_{O,0}=\frac{q}{6},
p_{O,1}=\frac{1-q}{6} \text{ for }  O\in\mathcal R_5.$$

Fix \(0<\delta<1\) and \(0\leq t<\delta/2\). Put
$s_0:=1+2t.$
Choose
$s_0<D<1+\delta.$
Recall that, for \(p=5\),
$\rho_5=\frac{\sqrt5}{3}.$
For \(q\in(0,1)\), put
\[
A_q:=q^2+(1-q)^2,
\qquad
B_q:=\sqrt2\,q(1-q)\rho_5,
\qquad
C_5:=\rho_5^2,
\]
and define
\[
\Theta_{5,q}
:=
\frac{
A_q+C_5+
\sqrt{(A_q-C_5)^2+4B_q^2}
}{2}.
\]
Write
$a(q):=-\log\Theta_{5,q},
\qquad
b(q):=-2\log q.$
As \(q\to1\), the formula for \(\Theta_{5,q}\) gives
\[
a(q)=2(1-q)+O((1-q)^2),
\qquad
b(q)=2(1-q)+O((1-q)^2).
\]
Thus
\[
\frac{b(q)}{a(q)}\longrightarrow1
\qquad\text{and}\qquad
a(q)\longrightarrow0
\qquad\text{as }q\to1.
\]
We may therefore choose \(q\in(0,1)\), sufficiently close to \(1\), so that
\begin{equation}
\label{eq:rough-parameter-choice}
D\frac{b(q)}{a(q)}<1+\delta
\end{equation}
and
\begin{equation}
\label{eq:rough-spectral-choice}
\frac{s_0(D+1)}{D-s_0}\frac{a(q)}{D}
<
-\log\rho_5 .
\end{equation}
Set
$r:=\exp\left(-\frac{a(q)}{D}\right)$ and
$\nu:=\nu_{r,q}.$
Then
$\frac{-\log\Theta_{5,q}}{-\log r}=D.$
Repeating the proof of Lemma~\ref{lem:mask-estimate}, but with
$m_R(\omega)
=
q+(1-q)e^{-2\pi iR\omega_1},$
gives a one-step contraction for the spherical average, with limiting
quadratic form
\[
\begin{pmatrix}
A_q & B_q\\
B_q & C_5
\end{pmatrix},
\]
whose largest eigenvalue is \(\Theta_{5,q}\). Iterating this estimate and
using the spherical-average energy identity
\eqref{eq energy id} (or
Lemma~\ref{lem:peres-schlag-line}) gives
\[
\dim_2\nu_{r,q}
\geq
\min\left\{
3,\frac{-\log\Theta_{5,q}}{-\log r}
\right\}.
\]
Since
$\frac{-\log\Theta_{5,q}}{-\log r}
=
D<3,$
we obtain
\begin{equation}
\label{eq:rough-example-lower-dimension}
\dim_2\nu_{r,q}\geq D>s_0.
\end{equation}

We next prove the upper bound on \(\dim_2\nu\). Let \(K=\operatorname{spt}\nu\),
and choose \(M>0\) such that
$K\subset B(0,M).$
For every \(O\in\mathcal R_5\), the map
$f_{O,0}(x)=rOx$
fixes the origin. Hence, for every word
$u=((O_1,0),\ldots,(O_n,0)),$
whose translations are all zero, one has
$f_u(K)\subset B(0,Mr^n).$
The total weight of all such words is \(q^n\). Therefore
\[
\nu\bigl(B(0,Mr^n)\bigr)\geq q^n.
\]
Consequently, for every \(s>0\),
\[
I_s(\nu)
\geq
(2Mr^n)^{-s}\nu\bigl(B(0,Mr^n)\bigr)^2
\geq
(2M)^{-s}\left(q^2r^{-s}\right)^n.
\]
If
$s>\frac{-2\log q}{-\log r},$
then \(q^2r^{-s}>1\), and letting \(n\to\infty\) gives
\(I_s(\nu)=\infty\). Hence
\[
\dim_2\nu
\leq
\frac{-2\log q}{-\log r}
=
D\frac{b(q)}{a(q)}
<
1+\delta,
\]
by \eqref{eq:rough-parameter-choice}. Thus
$1<\dim_2\nu<1+\delta.$
In particular, since \(\nu\) is compactly supported and \(\dim_2\nu<3\), we have
$\nu\notin L^2(\mathbb R^3).$

It remains to prove the projection statement. Since
\[
p_{O,0}+p_{O,1}=\frac16
\qquad
\text{for every }O\in\mathcal R_5,
\]
the rotational averaging operator associated to \(\nu\) is
\[
PF(g)=\frac16\sum_{O\in\mathcal R_5}F(O^{-1}g),
\qquad
F\in L^2(\operatorname{SO}(3)).
\]
Thus, by \eqref{eq: ramanujan},
$\|P\|_{L^2_0(\operatorname{SO}(3))}
=
\rho_5.$
Using \eqref{eq:rough-example-lower-dimension}, the fact that
$x\mapsto \frac{x+1}{x-s_0}$
is decreasing on \((s_0,\infty)\), and \eqref{eq:rough-spectral-choice}, we get
\[
\frac{s_0(\dim_2\nu+1)}{\dim_2\nu-s_0}(-\log r)
\leq
\frac{s_0(D+1)}{D-s_0}\frac{a(q)}{D}
<
-\log\rho_5
=
-\log\|P\|_{L^2_0(\operatorname{SO}(3))}.
\]
Thus Theorem~\ref{thm: baby case} applies with \(s=s_0\). Therefore, for every
\(V\in G_{3,1}\),
\[
\pi_V\nu\ll\operatorname{Leb}_V
\qquad\text{and}\qquad
\pi_V\nu\in H^{(s_0-1)/2}(V)=H^t(V).
\]
This completes the proof.
\subsection{Proof of Corollary \ref{cor:explicit-ambient-ac}}
\label{subsec: proof of corollary ac}

Let
$r:=\frac{1977}{2000},$
and let \(\nu\) be the uniform self-similar measure corresponding to the IFS
$\left\{
x\mapsto rOx,\,
x\mapsto rOx+e_1
\right\}_{O\in\mathcal R_5}.$
For every \(0<\varepsilon<1/10\), the measure \(\nu\) is the
\(\mathbb R^3\)-marginal of \(\mu_{r,\varepsilon}\) from
Lemma~\ref{lem:mask-estimate}. Hence, taking
\(\varrho=1\) and \(z=0\) in that lemma and iterating, we obtain, for every
$0<\beta<\beta_0,
\,
\beta_0:=
\frac{
\log\left(\frac{36}{19+\sqrt{91}}\right)
}{
\log\left(\frac{2000}{1977}\right)
},$
\begin{equation}
\label{eq:spherical-average-ambient-example}
\mathcal S_\nu(R)
:=
\int_{S^2}
\left|\mathcal F_{R\omega}(\nu)\right|^2\,d\sigma(\omega)
\lesssim_\beta
R^{-\beta},
\qquad R\geq1.
\end{equation}
Numerically,
$\beta_0>20.$

We first prove the Sobolev regularity. Let \(0\leq t<3.9\). Choose
\(\beta\) such that
$2t+3<\beta<\beta_0.$
Using polar coordinates and \eqref{eq:spherical-average-ambient-example}, we get
\[
\begin{aligned}
\int_{\mathbb R^3}
(1+|\xi|^2)^t
|\mathcal F_\xi(\nu)|^2\,d\xi
&\lesssim_t
1+
\int_1^\infty
R^{2t+2}\mathcal S_\nu(R)\,dR
\\
&\lesssim_{\beta,t}
1+
\int_1^\infty
R^{2t+2-\beta}\,dR
<\infty.
\end{aligned}
\]
Therefore, by Plancherel's theorem, for every \(0\leq t<3.9\),
\[
\nu\ll\operatorname{Leb}_{\mathbb R^3}
\qquad\text{and}\qquad
\nu\in H^t(\mathbb R^3).
\]

Taking \(t>3/2\), we see that the density is continuous and vanishes at
infinity. Taking \(t>2+3/2\), the Sobolev embedding theorem gives
$\nu\in C^2_0(\mathbb R^3).$
Since \(\nu\) is a probability measure with a continuous density, there is a
point where this density is positive. Hence it is positive on some non-empty
open ball, and therefore
$\operatorname{Int}_{\mathbb R^3}(\operatorname{spt}\nu)\neq\emptyset.$

Moreover, the same Sobolev regularity also implies that \(\nu\) has full
Fourier dimension. Indeed, let \(f=d\nu/dx\). Since \(\nu\) is compactly
supported and \(f\in H^2(\mathbb R^3)\), the second weak derivatives of \(f\)
belong to \(L^1(\mathbb R^3)\). Hence, for \(|\xi|\geq1\), choosing
\(1\leq j\leq3\) with \(|\xi_j|\geq |\xi|/\sqrt3\), we have
\[
|\mathcal F_\xi(\nu)|
=
|\widehat f(\xi)|
\leq
(2\pi|\xi_j|)^{-2}
\left\|\partial_j^2 f\right\|_{L^1}
\lesssim
|\xi|^{-2}.
\]
It follows that \(\dim_F\nu=3\). Since also
\(\nu\ll\operatorname{Leb}_{\mathbb R^3}\), we have \(\dim_H\nu=3\). Thus
\(\nu\) is Salem.

\subsection{Proof of Remark~\ref{rmk: lyp}}
\label{subsec:lyapunov-variant}

We repeat the proof of Theorem~\ref{thm:main}, replacing the deterministic
lower bound on the stopping time by a large-deviation estimate. Keep the
notation of that proof, and put
\[
\Lambda_V
:=
-\log\left(
\sum_{j=1}^m
\widetilde p_j
\left\|
P_{\rho_j,V}
\right\|_{L^2_0(\mathcal O_V,m_V)}
\right).
\]

Choose \(\alpha>0\) such that
\begin{equation}
\label{eq:lyap-alpha-main}
\alpha(S-b)>b
\end{equation}
and
\begin{equation}
\label{eq:lyap-alpha-error}
b(q-1)+\bigl(\sigma+b(q-1)\bigr)\alpha
<
\Lambda_V\frac{1-\varepsilon}{\chi},
\end{equation}
and, when \(R>0\),
\begin{equation}
\label{eq:lyap-alpha-deviation}
b(q-1)+\bigl(\sigma+b(q-1)\bigr)\alpha
<
\frac{2\varepsilon^2\chi}{(1-\varepsilon)R^2}.
\end{equation}
Such an \(\alpha\) exists, since
$\inf_{\alpha>b/(S-b)}
\left[
b(q-1)+\bigl(\sigma+b(q-1)\bigr)\alpha
\right]
=
C_{q,b,S,\sigma},$
and by the assumptions in Remark~\ref{rmk: lyp}.

We now repeat the scale decomposition in the proof of
Theorem~\ref{thm:main}, using the stopping levels \(L_n=n\). The bounded
overshoot
$0\leq\varepsilon_{\eta,n}\leq\lambda_{\max}$
only changes the implicit constants. The main-term estimate is unchanged:
its additional exponential factor is
$\exp\left(
(q-1)n\bigl[b-\alpha(S-b)\bigr]
\right),$
which is summable by \eqref{eq:lyap-alpha-main}.

It remains to estimate the mixing error. Set
\[
B_j
:=
\left\|
P_{\rho_j,V}
\right\|_{L^2_0(\mathcal O_V,m_V)},
\qquad
B
:=
\sum_{j=1}^m\widetilde p_jB_j
=
e^{-\Lambda_V},
\]
and, for \(\omega\in\Omega\) and \(\ell\in\mathbb N\), put
$Y_\ell(\omega)
:=
\prod_{a=1}^{\ell}B_{\omega_a}.$
Up to a multiplicative constant, the error term is bounded by
\begin{equation}
\label{eq:lyap-error-series}
\sum_{n\geq1}
\exp\left(
n\left[
b(q-1)+\bigl(\sigma+b(q-1)\bigr)\alpha
\right]
\right)
\mathbb E_Q\left(Y_{\tau_n}\right).
\end{equation}

Let
$A_n
:=
\left\{
\tau_n\leq\frac{(1-\varepsilon)n}{\chi}
\right\}.$
By Lemma~\ref{lem:stopping-model},
$Q(A_n)
\leq
\exp\left(
-\frac{2\varepsilon^2\chi}{(1-\varepsilon)R^2}\,n
\right),$
with the convention that \(Q(A_n)=0\) when \(R=0\).

Set
$M_n
:=
\left\lfloor
\frac{(1-\varepsilon)n}{\chi}
\right\rfloor.$
On \(A_n^c\), we have \(\tau_n>M_n\), and hence, since
\(0\leq B_j\leq1\),
$Y_{\tau_n}\leq Y_{M_n}.$
Therefore,
\[
\mathbb E_Q\left(Y_{\tau_n}\right)
\leq
Q(A_n)+\mathbb E_Q\left(Y_{M_n}\right).
\]
By independence,
\[
\mathbb E_Q\left(Y_{M_n}\right)
=
B^{M_n}
\lesssim
\exp\left(
-\Lambda_V\frac{1-\varepsilon}{\chi}\,n
\right).
\]
Consequently,
\begin{equation}
\label{eq:lyap-stopped-bound}
\mathbb E_Q\left(Y_{\tau_n}\right)
\lesssim
\exp\left(
-\Lambda_V\frac{1-\varepsilon}{\chi}\,n
\right)
+
\exp\left(
-\frac{2\varepsilon^2\chi}{(1-\varepsilon)R^2}\,n
\right),
\end{equation}
where the second term is omitted when \(R=0\).

Substituting \eqref{eq:lyap-stopped-bound} into
\eqref{eq:lyap-error-series}, we obtain at most two geometric series.
They converge by
\eqref{eq:lyap-alpha-error} and
\eqref{eq:lyap-alpha-deviation}. The remainder of the proof of
Theorem~\ref{thm:main} is unchanged and yields
$\pi_V\nu\in B^{(b-k)/q'}_{q,q}(V).$
In particular,
$\pi_V\nu\ll\operatorname{Leb}_V.$
This proves Remark~\ref{rmk: lyp}.

\bibliographystyle{plain}
\bibliography{bib}

\end{document}